\newtheorem{thm}{Theorem}[section]
\newtheorem{proposition}[thm]{Proposition}
\newtheorem{lemma}[thm]{Lemma}
\newtheorem{cor}[thm]{Corollary}
\theoremstyle{definition}
\newtheorem{dfn}[thm]{Definition}
\newtheorem{ex}[thm]{Example}
\newtheorem{remark}[thm]{Remark}
\newcommand{\C}{\mathbb{C}}
\newcommand{\CC}{\mathbb{C}}
\newcommand{\ZZ}{\mathbb{Z}}
\newcommand{\Fc}{\mathcal{F}}
\newcommand{\Tc}{\mathcal{T}}
\newcommand{\vphi}{{\varphi}}
\newcommand{\be}{\begin{equation}}
\newcommand{\ee}{\end{equation}}
\newcommand{\rank}{\text{rank}}
\newcommand{\mon}{{\text{Mon}}}
\newcommand{\diag}{{\rm{diag}}}
\newcommand{\CP}{\mathbb{CP}}
\newcommand{\FF}{\mathbb F}
\newcommand{\PP}{\mathbb P}
\newcommand{\leg}[2]{\left(\tfrac{#1}{#2}\right)}
\renewcommand{\i}{\mathrm{i}}
\DeclareMathOperator{\cir}{\mathrm{circ}}
\DeclareMathOperator{\negc}{\mathrm{neg}}
\newcommand{\etf}[2]{{\text{ETF}(#1,#2)}}
\newcommand{\etfn}{\etf{2n}{n}}
\newcommand{\tf}[2]{{\text{TF}(#1,#2)}}
\newcommand{\conf}[2]{{\text{CONF}(#1,#2)}}
\DeclareMathOperator{\Aut}{\mathrm{Aut}}
\numberwithin{equation}{section}
\begin{document}

\title{Abelian and Dihedral equiangular tight frames of redundancy $2$}

\author[R. Ben-Av]{Radel Ben-Av}
\address{Department of Computer Science, Holon Institute of Technology}
\email{benavr@hit.ac.il}
\author[X. Chen]{Xuemei Chen}
\address{Department of Mathematics and Statistics, University of North Carolina Wilmington}
\email{chenxuemei@uncw.edu}
\author[A. Goldberger]{Assaf Goldberger}
\address{School of Mathematical Sciences, Tel Aviv University}
\email{assafg@tauex.tau.ac.il}
\author[K. A. Okoudjou]{Kasso A. Okoudjou}
\address{Department of Mathematics, Tufts University, 177 College Avenue, Medford, MA 02155, USA}
\email{kasso.okoudjou@tufts.edu}

\date{\today}

\subjclass[2000]{Primary 42C15 Secondary 52C30}
\keywords{Equiangular tight frames, group frames, Paley ETF, diherdal symmetry}

\begin{abstract}
This paper studies group frames ($G$-frames) where the unitary group representation can be projective. 
When the group is abelian, for most combinations $N, n$, we show that $\etf{N}{n}$ can only exist for genuinely projective group representations. In particular, cyclic-group frames for such parameters do not exist. We also give a characterization of all dihedral tight frames and dihedral $\etfn$, using which, we conclude that regular dihedral $\etfn$ must be genuinely projective. Following that, we give a characterization of regular dihedral $\etfn$ in terms of certain structured skew Hadamard matrices. We then show that Paley $\etfn$ and its doubling are both of this type. Finally, we classify all regular dihedral $\etfn$ for $n\le 22$ up to switching equivalence.
\end{abstract}

\maketitle

\section{Introduction}\label{sec1}

\subsection{Background and notations}\label{subsec1.1}

An $N$-point configuration $\Phi=\{\vphi_j\}_{j=0}^{N-1}\subset\C^n$ is an {\bf ordered multiset} of nonzero vectors. We write $[\Phi]\in \CC^{n\times N}$ for the matrix whose columns are the vectors $\varphi_j$ in the prescribed order. By abuse of notation, most of the time we will just write $\Phi$ instead of $[\Phi]$. Given a configuration $\Phi$, if another configuration $\Psi$ is obtained by applying a unitary transformation to the left of $\Phi$, reindexing $\Phi$, or changing phase of any of the vector, or composition of any of the above operations, then $\Phi$ and $\Psi$ are said to be switching equivalent. This is an equivalence relation, and all configurations in this equivalent class should be considered the same. See Section \ref{sec:switching} for more details.

Our interest in $N$-point configurations stems  from their approximation property which is best described by the notion of a finite frame. $\Phi=\{\vphi_j\}_{j=0}^{N-1}\subset\C^n$ is called a \emph{frame} if it spans $\C^n$, or equivalently if  there exist $0<A\leq B<\infty$ such that 
\begin{equation}
A\|x\|^2\leq\sum_{j=0}^{N-1}|\langle x, \vphi_j\rangle|^2\leq B\|x\|^2\qquad \forall\ x\in\C^n,
\end{equation}
where $\|\cdot\|$ is the $\ell_2$ norm.  The frame $\Phi=\{\vphi_j\}_{j=0}^{N-1}$ is said to be \emph{tight} if $A=B$. 
Furthermore, a tight frame $\Phi=\{\varphi_j\}_{j=0}^{N-1}$ is called an \emph{equiangular tight frame (ETF)} if  $|\langle \vphi_i/\|\vphi_i\|, \vphi_j/\|\vphi_j\|\rangle|$ is constant for all $i\neq j$. We write $\conf{N}{n}$ (resp. $\tf{N}{n}$, $\etf{N}{n}$) for the space of all $N$-point configurations (resp. tight frames, equiangular tight frames) in $\CC^n$. 

The \emph{Gram matrix} (also called the \emph{Gramian}) of a frame  $\Phi$ is  the $N\times N$ matrix, $\Phi^*\Phi$, which stores the pairwise inner products, that is, its $(j, k)$ entry is $\vphi_j^*\vphi_k=\langle \vphi_j, \vphi_k\rangle$. The \emph{frame operator} (or frame matrix) is the $n\times n$ matrix  $\Phi\Phi^*$. One can check that $\Phi$ is a tight frame if and only if its frame operator $\Phi\Phi^*=A I_n$, where $I_n$ is the $n\times n$ identity matrix. This is also equivalent to $\tfrac{1}{A}\Phi^*\Phi$ being an idempotent of rank $n$. If $\Phi=\{\varphi_j\}_{j=0}^{N-1}$ is a tight frame and all its vectors $\varphi_j$ are unit vectors, then $A=\tfrac{N}{n}$.  

Although the norms of frame vectors can play a large role, such as frame scalings to achieve tightness~\cite{CKOPR15}, the concept of ETF is inherently projective, and many of the frame optimization problems can be thought of as the arrangement of lines in $\C^n$~\cite{CHS21, CGGKO}. Specifically, it is shown in \cite{BP05} that the vectors in an ETF must have equal norms. As such, and without any loss of generality, \textbf{we shall assume  each vector in any $N$-point configuration we consider is  unit-norm.}  

The coherence of an $N$-point configuration  $\Phi=\{\varphi_k\}_{k=0}^{N-1}$ of unit-vectors in $\C^n$, is defined to be 
\be
\mu(\Phi)=\max_{j\neq k\in \{0,1, \hdots, N-1\}}|\langle \varphi_j, \varphi_k\rangle|.
\ee
The Welch bound stipulates that 
\be\label{equ:welch}
\mu(\Phi)\geq \sqrt{\tfrac{N-n}{n(N-1)}}
\ee
 and this bound is achieved for $N$-configurations $\Phi=\{\varphi_k\}_{k=1}^n$ for which $|\langle \varphi_j, \varphi_k\rangle|= \sqrt{\tfrac{N-n}{n(N-1)}}$ for $j\neq k$, that is when $\Phi$ is equiangular. Furthermore, this bound can only be achieved when $n+1\leq N\leq n^2$. The search for ETFs is an active research area \cite{MR4162318, MR3549477, MR4024972, MR1984549}. Its applications include quantum information science and engineering \cite{MR4796067, MR2059685, MR4705312, MR4438049} and deep learning \cite{MR4250189, MR4412185, MR4455183}. In addition, the case $N=n^2$ is related to the Zauner conjecture \cite{appl2005, zaun1999, MR3983953}. 

 \subsection{Our contributions}\label{subsec1.2}
While the search for maximal ETFs that would solve the Zauner conjecture continues to be an active research area, the characterization of other ETFs whose symmetries are better understood has attracted a lot of interest in recent years.  
Group frames, whose frame vectors are indexed by a group, capture these symmetries well \cite{VW04, H07, VW16, IJM20, IJM20non}. Prototypical examples of such frames include Gabor and wavelet systems.
Recently, Fallon and Iverson~\cite{FI23} and Iverson et al.~~\cite{IJM24} proposed a method for generating an infinite family of $\etf{2n}{n}$. In particular, they look at the sub class of 2-circulant $\etfn$, which are generated by a unitary group action of the cyclic group $\ZZ/n$ on two fiducial vectors. They show that for $n\le 165$ there is a positive dimensional manifold of inequivalent $\etfn$s, and conjecture that this is true for every $n$. While the impression is that 2-circulant $\etfn$s are abundant, it is difficult in practice to generate these objects. The main motivation of this paper is to look for 2-circulant ETFs with more symmetry. The two examples we have in mind are 1-circulant or dihedral frames. While we show that 1-circulant $\etfn$ do not exist, dihedral ones do exist, for infinitely many values of $n$, and for most small ones. 

This paper focuses on group frames where the underlying unitary group representation can be \textbf{projective} (equation \eqref{equ:pi}). This notion is not new, and some early work can be found in \cite{HL08}.

Section \ref{sec2} lists the necessary algebraic notations and preliminaries, including Lemma \ref{lem:red_aut}, a group frame identification result involving the automorphism group of the Gram matrix. This lemma is used frequently in subsequent sections for frame characterization.\\

We first state a result when the group is abelian. Theorem \ref{thm:abelian} indicates that when looking for abelian group frames that are ETF, we should focus our attention on projective representations. The rest of the paper then explores $\tf{2n}{n}$ and $\etfn$ who are dihedral group frames, as the dihedral group is the simplest non-abelian group. The general construction of dihedral configurations can be found in Section~\ref{sec:dihedral}. With Lemma \ref{lem:proj}, we give very explicit representations of dihedral group frames in \eqref{equ:SDnv} (strict) and \eqref{equ:PDnv} (projective).
Theorem~\ref{thm:circpart} characterizes the Gram matrix of any dihedral $\conf{2n}{n}$.
Theorem \ref{thm:str_g} and Theorem \ref{thm:proj_g} further characterize the Gram matrix of any dihedral $\tf{2n}{n}$.\\ 

Assuming regularity, which simply means that the first $n$ vectors in \eqref{equ:SDnv} or \eqref{equ:PDnv} span $\CC^n$, Theorem \ref{thm:main} gives a complete characterization of dihedral $\etfn$, which transfers the task of searching for dihedral $\etfn$ to searching for skew Hadamard matrices with a special structure. As it turns out, strict non-projective regular dihedral $\etfn$s do not exist, so Theorem \ref{thm:main} further breaks down to Theorem \ref{thm:2negcirc}. We also show that regular $\etfn$s do not exist for odd $n$. As for non-regular dihedral $\etfn$s, we do not know of even a single example, but we are able to prove in Theorem \ref{thm:finite} that in contrast to mere 2-circulant ETFs, there are only finitely many dihedral $\etfn$ up to switching equivalence.\\

As known examples of $\etfn$, we show in Theorem \ref{thm:paley} and Theorem \ref{thm:paley_d} that Paley $\etfn$ and its doubling are both genuine projective dihedral configurations. Finally, using characterization Theorem \ref{thm:main}, we are able to conduct an exhaustive search for small $n$ on a computer and therefore show that there is no regular dihedral $\etf{36}{18}$, and all the regular dihedral $\etfn$ for $n\leq20, n\neq16$ are of Paley type. There are regular dihedral $\etfn$ for $n=16, 24, 26$ that are not switching equivalent to a Paley type. We conclude with a full classification of all regular dihedral $\etfn$ up to switching equivalence, given in Table \ref{tab:exact}.



\section{Notations, Motivation,  and Preliminaries}\label{sec2}
In this section, after setting up the notations used in the rest of the paper, we motivate our work by reviewing the construction of certain ETFs from appropriately chosen Hadamard matrices. We then introduce certain representations of the dihedral group that are necessary to state and prove our main results.

\subsection{Notations}\label{subsec2.1}
In this paper, $\omega_n:=\exp(-2\pi \i/n)$ will denote the standard primitive complex $n$th root of unity, and $\mu_n$ is the set of all $n$th roots of unity.
Let $S^1\subset \CC$ be the group of \emph{unimodular} numbers, i.e. those with modulus $1$. If $a$ and $b$ are two vectors or matrices, we write $a\doteq b$ if $a=\alpha b$ for a unimodular scalar $\alpha$, and say that $a$ is \emph{projectively equal} to $b$. The complex projective space $\CP^{d-1}$ is the set of all unit vectors $v\in \CC^{d}$ modulo the relation $\doteq$.

 Let $I_d$ denote the identity matrix of size $d\times d$. Sometimes we will  use $I$ if $d$ is clear. 
 For a complex vector $v$, let $\diag(v)$ denote the diagonal matrix with the diagonal $v$. 
 If $A_1,\ldots,A_m$ are scalars or matrices, we will write $\diag(A_1,\ldots,A_m)$ for the block diagonal matrix where $A_i$ are the blocks in the diagonal in the specified order.
 We shall write $[A,B;C,D]$ for the $2\times 2$ (block) matrix whose top row is $[A,B]$ and bottom row $[C,D]$.

Let $S_n$ denote the group of all permutations of the set $\underline{n}:=\{0,1,\ldots,n-1\}$, and $U(n)$ be the group of $n\times n$ unitary matrices. For any permutation $\sigma\in S_n$ we define the matrix $P_\sigma\in \CC^{n\times n}$ by the rule $(P_\sigma)_{i,j}=1$ if $i=\sigma(j)$ and $(P_\sigma)_{i,j}=0$ otherwise. The assignment $\sigma\mapsto P_\sigma$ is a group homomorphism.

A circulant $n\times n$ matrix is a matrix $C$ whose entries $C_{i,j}$ depend only on $(i-j)\mod n$. Hence, a circulant matrix is determined by its first row, which can be arbitrary. 
We let $\cir(v)$ denote the circulant matrix whose first row is $v$. 
The collection of all (complex, real, rational, integer) $n\times n$ circulant matrices is a commutative matrix algebra, also closed under the conjugate-transpose. In fact, any circulant matrix is a polynomial in the matrix $\cir([0,\ldots,0,1])$. A \emph{negacirculant} matrix (in some places called \emph{negacyclic}) is a square matrix $N$ of the form $$N_{i,j}=\begin{cases} v_{j-i} & j\ge i\\ -v_{n+j-i} & j<i \end{cases},$$ for some vector $v=[v_0,\ldots,v_{n-1}]$. We denote this matrix by $\negc(v)$ as $v$ is its first row. Similarly to the circulant case, the collection of all (complex, real, rational, integer) negacirculant matrices is a commutative matrix algebra, also closed under the conjugate-transpose.

\begin{dfn}\label{def:2circ}
Let $H=[A, B; C, D]$ where all blocks $A, B, C, D$ are circulant (resp. negacirculant). We will call such $H$ a \emph{2-circulant (resp. 2-negacirculant)} matrix.
\end{dfn}

\subsection{A motivating result}\label{subsec2.2}
In this section we discuss the construction of some $\etfn$ from certain Hadamard matrices.  

A \emph{Hadamard matrix of order $m$} is a $\{\pm 1\}^{m\times m}$ matrix $H$ such that $HH^\top=mI$. A \emph{skew Hadamard matrix} is a Hadamard matrix $H$ such that $H+H^\top=2I_m$. A \emph{conference matrix} $C$ of order $m$ has 0 on the diagonal and $\pm1$ on the off-diagonal such that $CC^\top = (m-1)I_m$. It is easy to check that $H$ being skew Hadamard is equivalent to $H-I_m$ being a skew-symmetric conference matrix.
Hadamard matrices of orders at least 4 can exist only for size divisible by 4. It is conjectured that Hadamard matrices exist for any order divisible by 4.

It is well known that (skew-)symmetric conference matrices give rise to ETFs~\cite{DGS91}. We provide a proof for completeness.
\begin{lemma}\label{lem:shm}
   Suppose $n$ is an even natural number.  If $H$ is skew Hadamard of size $2n$, then $I_{2n}+\frac{\i}{\sqrt{2n-1}}(H-I)$ is the Gram matrix of an $\etfn$. Conversely, if $I_{2n}+\frac{\i}{\sqrt{2n-1}}C$ is the gram matrix of an $\etfn$ where $C$ is real, then $C+I$ is skew Hadamard.
\end{lemma}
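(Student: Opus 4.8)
The plan is to verify directly that the Hermitian matrix $G = I_{2n} + \frac{\i}{\sqrt{2n-1}}(H-I)$ is the Gram matrix of an $\etfn$, by checking the three defining conditions: (i) $G$ is positive semidefinite of rank $n$ (equivalently $\frac{n}{2n}G = \frac{1}{2}G$ is idempotent of rank $n$, by the tightness criterion recalled after the Gram matrix definition), (ii) $G$ has unit diagonal, and (iii) the off-diagonal entries have constant modulus equal to the Welch bound $\sqrt{\frac{2n-n}{n(2n-1)}} = \frac{1}{\sqrt{2n-1}}$. Conditions (ii) and (iii) are immediate: since $H$ is skew Hadamard, $H - I$ has zero diagonal, so $G$ has $1$'s on the diagonal; and the off-diagonal entries of $H-I$ are exactly the $\pm 1$ off-diagonal entries of $H$, so the off-diagonal entries of $G$ have modulus $\frac{1}{\sqrt{2n-1}}$ as required.

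The heart of the matter is the rank/idempotency condition. Write $C = H - I$, which is a real skew-symmetric matrix ($C^\top = -C$) because $H + H^\top = 2I$, and which satisfies $C C^\top = -C^2 = H H^\top - H - H^\top + I = 2nI - 2I = (2n-1)I$, i.e. $C^2 = -(2n-1)I$. Hence with $s = \sqrt{2n-1}$ we have $G = I + \frac{\i}{s}C$ and
\begin{align*}
G^2 &= I + \frac{2\i}{s}C + \frac{\i^2}{s^2}C^2 = I + \frac{2\i}{s}C - \frac{1}{2n-1}\cdot(-(2n-1)I) = 2I + \frac{2\i}{s}C = 2G.
\end{align*}
Thus $\frac12 G$ is idempotent, so its eigenvalues are $0$ and $1$ and its rank equals its trace, which is $\frac12 \cdot 2n = n$. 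Therefore $G$ is positive semidefinite of rank exactly $n$, and so $G$ is the Gram matrix of a tight frame of $2n$ unit vectors in $\CC^n$; combined with (ii) and (iii), it is the Gram of an $\etfn$. (The hypothesis that $n$ is even is needed only to ensure that a skew Hadamard matrix of order $2n$ — a Hadamard matrix — can exist, since Hadamard matrices of order $>2$ require the order divisible by $4$; the computation itself does not use it.)

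For the converse, suppose $G = I_{2n} + \frac{\i}{s}C$ with $C$ real is the Gram matrix of an $\etfn$, where again $s = \sqrt{2n-1}$. Since $G$ is Hermitian and $I$ is real symmetric, $\frac{\i}{s}C$ must be Hermitian, forcing $C^\top = -C$: $C$ is real skew-symmetric, hence has zero diagonal, so $C + I$ has diagonal $1$ and off-diagonal entries equal to those of $C$. The ETF equiangularity at the Welch bound forces every off-diagonal entry of $\frac{\i}{s}C$ to have modulus $\frac{1}{s}$, i.e. every off-diagonal entry of $C$ is $\pm 1$; thus $C + I \in \{\pm 1\}^{2n\times 2n}$ and $(C+I) + (C+I)^\top = 2I$, so it only remains to check $(C+I)(C+I)^\top = 2nI$. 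The tightness condition gives that $\frac{1}{2}G$ is idempotent, so $G^2 = 2G$, which expands to $I + \frac{2\i}{s}C + \frac{\i^2}{s^2}C^2 = 2I + \frac{2\i}{s}C$, and cancelling the common terms yields $\frac{-1}{2n-1}C^2 = I$, i.e. $C^2 = -(2n-1)I$. Since $C$ is skew-symmetric this says $C C^\top = -C^2 = (2n-1)I$, whence $(C+I)(C+I)^\top = CC^\top + C + C^\top + I = (2n-1)I + 0 + I = 2nI$. Therefore $C + I$ is skew Hadamard. The only genuinely delicate point in the whole argument is bookkeeping the interaction between the $\i$, the Hermitian symmetry of $G$, and the real-skew-symmetry of $C$ correctly; everything else is the short algebraic identity $C^2 = -(2n-1)I$ used in both directions.
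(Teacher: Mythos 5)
Your proof is correct and follows essentially the same route as the paper's: both directions reduce to the identity $C^2=-(2n-1)I$ and the idempotency of $\tfrac12 G$, with the rank obtained from the trace rather than the paper's eigenvalue-multiplicity count (an equivalent argument). The only blemish is a typo in the intermediate display $HH^\top-H-H^\top+I=2nI-2I$, which should read $2nI-2I+I$; your stated conclusion $(2n-1)I$ is correct.
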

\begin{proof}
Let $C=H-I$ and $G=I+\i C/\sqrt{2n-1}$. Then $G$ is Hermitian if and only if $C$ is skew-symmetric. Moreover,
$$(\tfrac{1}{2}G)^2=\tfrac{1}{2}G\Longleftrightarrow (G-I)^2=I\Longleftrightarrow C^2=-(2n-1)I.$$
If $H$ is skew Hadamard, then the eigenvalues of $C$ reside in $\{\pm\i\sqrt{2n-1}\}$. Since $C$ has trace 0, both values appear with multiplicity $n$, resulting $G=I+\i C/\sqrt{2n-1}$ being of rank $n$, giving rise to an $\etfn$. The converse can be argued similarly using the facts listed above.
\end{proof}

Recently,  Fallon and Iverson~\cite{FI23} provide a new construction of $\etfn$ using a variant of Hadamard doubling, originally appearing in Wallis \cite{Wallis_1971}. A special case of this doubling technique is the following. Suppose that $H=I_{2n}+C$ is skew Hadamard. Then
\begin{equation}\label{equ:double}
H_2\ := \ \begin{bmatrix}
    I_{2n}+C & I_{2n}+C\\
    -I_{2n}+C & I_{2n}-C
\end{bmatrix}
\end{equation}
is again skew Hadamard. Hence the doubling \eqref{equ:double} yields an $\etf{4n}{2n}$.  

The framework introduced in \cite{FI23} transforms an $\etf{n}{\frac{n-1}{2}}$ into an $\etf{2n}{n}$. In a subsequent work~\cite{IJM24}, Iverson et al. constructed a new infinite family of $\etfn$ with a block circulant structure, that is the frame $\Phi$ is 2-circulant of the form $\Phi=[A, B]\in\etfn$ where both $A$ and $B$ are circulant matrices. Equivalently, $\Phi$ of the form 
\begin{equation}\label{equ:2c}
\Phi =[x, \Tc x, \cdots, \Tc^{n-1}x, y, \Tc y, \cdots, \Tc^{n-1}y],
\end{equation}
where $\Tc$ denotes the circular translation operator and $x, y $ are unit-norm vectors, which are termed \emph{fiducial vectors}. The notion of a 2-circulant configuration is the same as a \emph{2-generator $\ZZ/n$-harmonic configuration}, as per ~\cite[Definition 21]{IJM24}, where $\ZZ/n$ is the cyclic group of order $n$. The doubling construction above does not preserve the property of being 2-circulant. Next, we will be talking about dihedral configurations, which in particular, after suitable transformations, are 2-circulant.


\subsection{Group and dihedral configurations}\label{sec:dihedral}

Let $G$ be a finite abstract group. A \emph{unitary representation} of $G$ of dimension $n$ is a homomorphism $\rho:G\to U(n)$, that is a map satisfying 
\begin{equation}\label{equ:rho}
\rho(gg')=\rho(g)\rho(g') \text{ for all }g,g'\in G.
\end{equation}  
A \emph{projective unitary representation} of $G$ of dimension $n$ is a map $\pi:G\to U(n)$, satisfying 
\begin{equation}\label{equ:pi}
\pi(gg')\doteq \pi(g)\pi(g') \text{ for all } g,g'\in G.
\end{equation}
Projective representations generalize ordinary representations, and we will refer to a representation $\rho$ satisfying \eqref{equ:rho} as a \emph{strict} representation. If $\rho$ is not strict, then we will refer to it as a \emph{genuine projective} representation.

\begin{dfn}\label{def:gframe}
Given a finite group $G$ with an ordering, and a representation $\pi$ (strict or projective), and a unit vector $v\in \CC^n$, which we cosider as the \emph{fiducial} vector, we define the configuration $\pi(G)v$ to be the ordered multiset $\{\pi(g)v\ | \ g\in G\}$. We will usually denote this set merely by $Gv$, suppressing the representation $\pi$, whenever it is understood from the context. We shall call such sets \emph{$G$-configurations} or  \emph{group configurations}. If $Gv$ is a frame we say that we have a  \emph{$G$-frame}. 
\end{dfn}
If the ordering does not matter, any other vector in $Gv$ may be taken as the fiducial vector.

\noindent We are mostly interested in  \emph{Dihedral configurations}  for which the group $G$ is the dihedral group. 
Recall that the dihedral group is the abstract group $D_n$ defined by two generators $\mu,\tau$ and the relations $\mu^n=\tau^2=1$ and $\tau\mu=\mu^{-1}\tau$. This is a group of order $2n$ with the set of elements $\{1,\mu,\ldots,\mu^{n-1},\tau,\mu\tau,\ldots,\mu^{n-1}\tau\}$. 
A well-known strict representation in dimension $2$ is defined by 
$$\rho(\mu) = \begin{bmatrix} \cos(2\pi/n)& \sin(2\pi/n)\\ -\sin(2\pi/n)&\cos(2\pi/n)\end{bmatrix}$$ and $\rho(\tau)=\diag(-1,1)$, viewed as the symmetry group of the perfect $n$-gon. 
Since we are interested in its action on $v\in\C^n$,
 a strict unitary representation of $D_n$ is obtained by  giving two unitary matrices $M,T\in\C^{n\times n}$ satisfying the same relations; $M^n=T^2=I$ and $TM=M^{-1}T$. Indeed, we can define the representation by $\rho(\mu^i\tau^j):=M^iT^j$ for $0\le i\le n-1$ and $j=0,1$. For example, we can take
 \begin{equation}\label{equ:MT}
 M=\diag(1,\omega_n, \omega_n^2,\cdots, \omega_n^{n-1}),\quad T =\begin{bmatrix}
1 &  & & & & \\
 &  & & & & 1\\
 &  & & & 1 & \\
&&\\
 & 1 &  & & & 
\end{bmatrix}.\end{equation}
With this strict representation, we are interested in frames of the form
 \begin{equation}\label{equ:Dnv}
 [v, Mv, \cdots, M^{n-1}v, Tv, MTv, \cdots, M^{n-1}Tv],
 \end{equation}
 for some unit-norm vector $v$. If we rotate this frame by the unitary matrix of the discrete Fourier transform $\Fc$, we obtain a special example of a 2-circulant frame, as given in \eqref{equ:2c}:
 \begin{equation}\label{equ:dft}
 \Fc[x, \Tc x, \cdots, \Tc^{n-1}x] = [v, Mv, \cdots, M^{n-1}v],\quad\text{where }x = \Fc^*v.
 \end{equation}
 
 Likewise to give a projective representation amounts to giving two matrices $M,T$ satisfying $M^n\doteq T^2\doteq I$ and $TM\doteq M^{-1}T$. 
Up to equivalence of projective representations we can be yet more specific (see Lemma \ref{lem:proj}). Some of the projective representations of the dihedral group also give rise to 2-circulant frames. We will see that regular dihedral frames as such, up to switching equivalence (to be defined soon). 

 The doubling construction \eqref{equ:double} does not preserve the 2-circulant or dihedral structures. However, we will encounter below cases where dihedrality is preserved by doubling.


\subsection{Switching equivalences and automorphism groups}\label{sec:switching}

A phase \emph{mononmial matrix} is a square matrix $\Pi\in \CC^{N\times N}$ which can be represented (uniquely) as a product $\Pi=DP$ where $D,P\in \CC^{N\times N}$ such that $D$ is a diagonal matrix with $|D_{i,i}|=1$ for all $i$, and $P$ is a permutation matrix. The collection of all $N\times N$ phase monomial matrices will be denoted by $\mon(N)$, and forms a group under matrix multiplication. The group $\mon(N)$ is the semidirect product of the subgroup of permutation matrices $S_N$ by the normal subgroup of the diagonal phase matrices.

\begin{dfn}[Switching automorphism groups]
    Let $\Phi, \Psi\in \CC^{n\times N}$ be any two configurations. We say that $\Phi,\Psi$ are \emph{switching equivalent} if there exist a phase monomial matrix $\Pi\in \mon(N)$ and a unitary matrix $Q\in U(n)$ such that $$\Psi=Q\Phi \Pi^*.$$ We denote this by $\Phi\sim\Psi$.
\end{dfn}
Switching equivalence is an equivalence relation. The pair $(\Pi,Q)$ certifying an equivalence is not unique. However, when $\Phi$ is a frame, then $Q$ is unique up to a scalar, and when no two columns of $\Phi$ are linearly dependent, then  also $\Pi$ is unique up to a scalar. If both conditions hold, then the pair $(\Pi,Q)$ is unique up to a common scalar. This motivates the following definition.
\begin{dfn}
    Let $\Phi\in \CC^{n\times N}$ be a configuration. The \emph{switching automorphism group} $\Aut(\Phi)$ is the sub quotient group of all pairs $(\Pi,Q)\in \mon(N)\times U(n)$ satisfying \be\label{eq:aut_def} Q\Phi \Pi^*=\Phi,\ee modulo the identification $(\Pi,Q)\approx(\alpha \Pi,\alpha Q)$ for $|\alpha|=1$. 
\end{dfn}
When $\Phi\in \CC^{n\times N}$ is a frame, it is useful to work with the Gram matrix $\Phi^*\Phi\in \CC^{N\times N}$. Given a switching automorphism $(\Pi,Q)$ of $\Phi$, the Gram is invariant under the conjugation by $\Pi$, and $Q$ is uniquely determined from $\Pi$ up to a scalar. Converesely, if the Gram is invariant under some $\Pi\in \mon(N)$, then there exists a $Q\in U(n)$ such that $(\Pi,Q)$ is a switching equivalence. This motivates the following definition.

\begin{dfn}[Switching automorphism groups for Gram matrices]
For a positive semidefinite matrix $S\in \CC^{N\times N}$, the \emph{switching automorphism group} $\Aut(S)$ is the sub quotient group of all $\Pi\in \mon(N)$ such that $\Pi S\Pi^*=S$, modulo the identification $\Pi\approx\alpha \Pi$ for $|\alpha|=1$. 
\end{dfn}
For frames $\Phi$ there is a canonical identification $\Aut(\Phi)=\Aut(\Phi^*\Phi)$ given by $(\Pi,Q)\mapsto \Pi$.

\begin{ex}

The configuration $\Phi=\{a,b,c,d\}$ in the picture has $\Aut(\Phi)$  containing a cyclic of order $4$.  We apply the rotation $U$ counterclockwise by angle $\pi/4$, then multiply $e=Ud$ by $-1$, and finally apply a cyclic permutation to go back to $\Phi$. We can also reflect about $c$ and get another automorphism. We have $\Aut(\Phi)\cong D_8$, the Dihedral group.
	
	\begin{figure}[h]\label{fig:1}
		\begin{center}
			\begin{tikzpicture}[node distance=4cm]
			\node (O) at (0,0) {\textbullet};
			\node (B) at (2*0.707,2*0.707) {$b$ };
			\node (C) at (0,2) {$c$ };
			\node (D) at (2*-0.707,2*0.707) {$d$ };
			\node (E) at (-2,0) {$e$ };
			\node (A) at (2,0){$a$ };
			
			\node (P) at (2*0.965,2*0.259) {};
			\node (Q) at (2*0.866,2*0.5) { };
			
			\node (R) at (-1,0) { };
			\node (S) at (1,0) { };
			
			\draw[->] (O) edge (A) (O) edge (B) (O) edge (C) (O) edge (D);
			\draw[dashed,->] (O) edge (E);
			
			\draw[bend right=20,->] (P) edge  ($ (P)!0.6cm!(Q) $) node at (2.4*0.924,2.4*0.382) {$U$};
			\draw[bend right=40,,dashed,->]  (R) edge (S) node at (0,-0.7) {\scriptsize $-1$};
			\end{tikzpicture}
		\end{center}
	\end{figure}
\end{ex}

\subsection{More on group configurations}

A special case of interest is when the vectors in $Gv$ are pairwise non-proportional. 
Sometimes we will want to remove from $Gv$ linearly dependent copies of vectors and leave just one representative per each dependency class. Let $\overline{Gv}$ denote any maximal subset of $Gv$ containing only non-proportional vectors. We call $\overline{Gv}$ a reduced $G$-configuration. We call $Gv$ \emph{reduced} if $Gv = \overline{Gv}$.

\begin{ex}\label{ex:Wh}
    The group $G=(\ZZ/n\ZZ)^2$ has a 2-dimensional unitary projective representation $\pi:G\to U(n)$ defined by $\pi(1,0)=\diag(1,\omega,\omega^2,\ldots,\omega^{n-1})$, $\pi(0,1)=\cir(0,0,\ldots,1)$ and $\pi(i,j):=\pi(1,0)^i\pi(0,1)^j$. For a nonzero vector $v\in \CC^n$, the $G$-cyclic frame $Gv$ is called a \emph{Gabor} frame. The famous Zauner conjecture states that for every $n$ there exists a fiducial $v$ such that $Gv$ is an $\etf{n^2}{n}$~\cite{zaun1999}. There are also maximal MUBs that can be constructed from Gabor frames \cite{MM19}.
\end{ex}

There is a group homomorphism $G\to \Aut(\overline{Gv})$, given by the following recipe. Write $\overline{Gv}=\{\pi(g_1)v,\ldots,\pi(g_n)v\}$. For every $g\in G$ there is a unique permutation $\tau\in S_n$ and unique scalars $\alpha_i\in \CC^\times$, $1\le i\le n$ such that $\pi(g)\pi(g_i)v=\alpha_i \pi(g_{\tau(i)})v$. Then we map $g$ to the pair $(\pi(g),\diag(\alpha)P_{\tau})$, which gives rise to an automorphism of $\overline{Gv}$. This mapping is clearly a group homomorphism (Recall that we define automorphisms as pairs modulo unimodular scalars).
This homomorphism in general need not be injective. The following is a useful criterion to identify group frames.

\begin{lemma}\label{lem:red_aut}
    Suppose that $\Phi\in \conf{N}{n}$ is reduced and $G$ is a group of order $N$ with an embedding $\iota:G\hookrightarrow \Aut(\Phi^*\Phi)$. If $\iota(G)\subseteq Mon(N)/\sim$ acts transitively on the set $\underline{N}$, then $\Phi$ is switching equivalent to a reduced group frame $\overline{Gv}$.
\end{lemma}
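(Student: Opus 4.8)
The plan is to recover the group action directly from the abstract embedding $\iota$ and the transitivity hypothesis. First I would fix a base point, say $0\in\underline N$, and use transitivity to choose for each $i\in\underline N$ an element $g_i\in G$ with $\iota(g_i)$ sending $0$ to $i$ (as permutations of the index set); set $g_0=1$. Since $\iota(g)\in\mon(N)/{\sim}$ stabilizes $S=\Phi^*\Phi$, each $\iota(g)$ is represented by a phase monomial matrix $\Pi_g$ with $\Pi_g S\Pi_g^*=S$; by the remark preceding the lemma (when $\Phi$ is reduced, hence has no two proportional columns, and is a frame) there is a unitary $Q_g\in U(n)$, unique up to the common scalar, with $Q_g\Phi\Pi_g^*=\Phi$, and $g\mapsto(\Pi_g,Q_g)$ is a homomorphism into $\Aut(\Phi)$ lifting $\iota$. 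Because $\iota$ is a genuine embedding and $G$ acts transitively, the map $g\mapsto \Pi_g$ (equivalently $g\mapsto Q_g$) is injective modulo scalars, so we get a projective unitary representation $\pi:=Q_{(\cdot)}:G\to U(n)$.

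Next I would identify the fiducial vector. Let $v:=\varphi_0$, the $0$th column of $\Phi$. The identity $Q_g\Phi\Pi_g^*=\Phi$ says $Q_g\varphi_j=(\Pi_g)_{\sigma_g(j),j}\,\varphi_{\sigma_g(j)}$ where $\sigma_g$ is the permutation underlying $\Pi_g$; in particular $Q_g v=Q_g\varphi_0\doteq \varphi_{\sigma_g(0)}=\varphi_{\iota(g)\cdot 0}$. Since $g\mapsto \iota(g)\cdot 0$ is a bijection $G\to\underline N$ (transitivity plus $|G|=N$ forces the stabilizer of $0$ to be trivial — here one uses that the columns are non-proportional so the monomial matrix representing $\iota(g)$ is literally trivial when it fixes every index, i.e. $\iota$ separates points well enough), the configuration $\{\pi(g)v : g\in G\}=\{Q_gv:g\in G\}$ equals, column by column and up to phases, the configuration $\Phi$ reindexed by $g\mapsto \iota(g)\cdot 0$. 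Reindexing is a switching operation and rescaling each column by a unimodular phase is a switching operation, so $\Phi\sim \pi(G)v=\overline{Gv}$; the configuration $\overline{Gv}$ is automatically reduced because $\Phi$ is.

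The one genuinely delicate point, which I would treat carefully, is the \textbf{freedom of phases in choosing the lifts} $(\Pi_g,Q_g)$. A priori each $\iota(g)$ only determines $\Pi_g$ up to a unimodular scalar, and correspondingly $Q_g$ up to the same scalar, so $g\mapsto(\Pi_g,Q_g)$ need only be a homomorphism into $\Aut(\Phi)$ as a \emph{sub-quotient} — i.e. a projective homomorphism when we pick actual matrix representatives. This is exactly why the conclusion produces a possibly \emph{genuine projective} representation $\pi$ rather than a strict one, which is consistent with the statement (it says ``group frame'', and Definition~\ref{def:gframe} permits $\pi$ to be projective). So no cocycle needs to vanish; I only need that the \emph{configuration} $\{\pi(g)v\}$ is well-defined as an element of $\conf{N}{n}$ up to switching, which it is, since switching already quotients out column phases. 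The main obstacle is therefore bookkeeping: verifying that transitivity of $\iota(G)$ on $\underline N$ together with $|G|=N$ and the reducedness of $\Phi$ forces the orbit map $g\mapsto\iota(g)\cdot 0$ to be a bijection (the stabilizer must be trivial), and then checking that the resulting reindexed-and-rephased copy of $\Phi$ is genuinely of the form $\pi(G)v$ with the \emph{same} ordering on $G$ that was fixed at the outset. Once that is in place the switching equivalence $\Phi\sim\overline{Gv}$ is immediate from the definitions.
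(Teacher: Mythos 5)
Your proposal is correct and follows essentially the same route as the paper's proof: lift $\iota$ to a projective monomial representation, use the Gram-preservation to produce the unitaries $Q_g$ forming a projective representation $\pi$ on $\CC^n$, take $v=\varphi_0$ as fiducial, and use transitivity together with $|G|=N$ (orbit–stabilizer) and reducedness to see that $g\mapsto \iota(g)\cdot 0$ is a bijection, so $\Phi$ and $\overline{Gv}$ differ only by reindexing and phases. The phase/cocycle issue you flag is handled identically in the paper — the lift is only required to be a projective representation, so nothing further is needed.
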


\begin{proof}
    Let us lift the homomorphism $\iota$ to a map $\hat\iota:G\to Mon(N)$, which is clearly a projective representation of $G$. For every $g\in G$, the frame matrix $\Phi^g:=\Phi \cdot \hat\iota(g)^*$ admits the same Gram matrix, since $\hat\iota(g)\Phi^*\Phi\hat\iota(g)^*=\Phi^*\Phi$ by the definition of an automorphism. As a consequence, there exists a matrix $\pi(g)\in U(n)$ such that \be\label{eq:pig} \forall g\in G, \ \ \pi(g)\Phi^g=\Phi.\ee Since $\Phi$ is a frame, $\pi(g)$ is unique up to a scalar, and let us fix a choice of the function $\pi(g)$. It follows that $\pi:G\to U(n)$ is a unitary projective representation. Fix some ordering on $G=\{1=g_0\le \cdots \le g_{n-1}\}$ and consider the vector $v=\Phi_0$ and the configuration $Gv=\{v,g_1v,\ldots,g_{N-1}v\}$ via $\pi$. From \eqref{eq:pig} it follows that for each $i$, $g_iv\doteq \Phi_j$ for some $j=j(i)$. By the transitivity of $\iota(G)$, the fact that $|G|=N$, and that $\Phi$ is reduced, it follows that the assignment $i\mapsto j(i)$ is an isomorphism of $\underline{N}$. In particular, $\Phi$ and $\overline{Gv}$ differ by ordering and scalings, hence are switching equivalent.
\end{proof}

\subsection{Strict and Projective unitary representations}
In this section, we further elaborate on the notions of strict and projective representations given in \eqref{equ:rho} and \eqref{equ:pi}.

\begin{dfn}
    Two (strict) unitary representations $\rho,\rho':G\to U(n)$ are said to be \emph{equivalent}, if there exists a unitary $Q\in U(n)$ such that $Q\rho(g) Q^*=\rho'(g)$ for all $g\in G$. Two projective unitary representations $\pi,\pi':G\to U(n)$ are \emph{equivalent} if there exists a unitary $Q\in U(n)$ such that $Q\pi(g) Q^*\doteq\pi'(g)$ for all $g\in G$. We call $Q$ the \emph{equivalence matrix}. 
\end{dfn}

\begin{dfn}
    A map (or homomorphism) between two strict representations $\rho:G\to U(n),\rho':G\to U(m)$ (resp. projective representations $\pi,\pi'$) is a linear map $T:\CC^n\to \CC^m$ such that for all $g\in G$, $T\circ \rho(g)=\rho'(g)\circ T$ (resp. $T\circ \pi(g)\doteq \pi'(g)\circ T$).
    We shall write this as $T:\rho\to \rho'$ (resp. $T:\pi\to \pi$). An \emph{isomorphism} of (projective) unitary representations is a bijective homomorphism. Clearly an equivalence is an isomorphism. Not every equivalence is an isomorphism, but isomorphic representations are equivalent.
\end{dfn}

\begin{ex}
    Projective representations naturally arise from automorphism groups of configurations. If $\Phi$ is an $n\times N$ frame, every $g\in \Aut(\Phi)$ corresponds to a pair $(Q(g),\Pi(g))\in U(n)\times Mon(N)$ satisfying $Q(g)\Phi\Pi(g)^*=\Phi$. This correspondence is not a function, as pairs are being identified modulo scalars. But \emph{choosing} a function $g\mapsto (Q(g),\Pi(g))$, the maps $\pi:g\mapsto Q(g)$ and $\pi':g\mapsto \Pi(g)$ define projective representations of $\Aut(\Phi)$, whose equivalence classes are independent of the choice. Moreover, if $\Phi$ is a frame, then to each $\Pi(g)$ there corresponds a unique $Q(g)$ and this correspondence satisfies $Q(g)\Phi=\Phi\Pi(g)$. Then by definition,  $\Phi$ is a homomorphism from $\Pi$ to $Q$. Similarly, $\Phi^*:Q\to \Pi$. If $\Phi$ is a tight frame, then $\Phi$ is a left inverse of $\Phi^*$, up to a scalar, which shows that $\Pi$ contains an isomorphic copy of $Q$ as a subrepresentation, and $\Phi^*\Phi$ is up to a scalar the projection onto this subrepresentation. Everything said restricts to subgroups of $\Aut(\Phi)$ or more generally to groups with homomorphisms into $\Aut(\Phi)$.
\end{ex}

We note here the connection between projective representations of a finite group $G$ and its second cohomology group. For more details, see \cite{Karpilovsky1985,Karpilovsky1987}.
\begin{dfn}
    A \emph{2-cocycle} is a function $f:G\times G\to \CC^\times$ satisfying
    $$ f(y,z)f(x,yz)=f(xy,z)f(x,y), \ \ \text{for all } x,y,z\in G.$$
    a \emph{2-coboundary} is a function $c:G\times G\to \CC^\times$ such that for all $x,y\in G$,
    $$c(x,y)=h(x)h(y)h(xy)^{-1} \text{ for some } h:G\to \CC^\times.$$ The set $Z^2(G;\CC^\times)$ of 2-cocycles is a group under pointwise function multiplication. The set of 2-coboundaries $B^2(G;\CC^\times)$ is a subgroup of $Z^2(G;\CC^\times)$. The \emph{the 2nd cohomology} group is the quotient $$H^2(G;\CC^\times)\ := \ \frac{Z^2(G;\CC^\times)}{B^2(G;\CC^\times)}.$$ Two 2-cocycles $f,f'$ are said to be \emph{cohomologous} if $f=f'\cdot c$ for a 2-coboundary $c$. The image of $f$ in $H^2(G;\CC^\times)$ is called the \emph{cohomology class} of $f$, denoted by $[f]$.
\end{dfn}

If $\pi:G\to U(n)$ is a unitary projective representation, then we have $\pi(xy)=f(x,y)\pi(x)\pi(y)$ for some phase $f:G\times G\to \CC^\times$. The function $f=f_\pi$ is a 2-cocycle. If $\pi$ and $\pi'$ are equivalent projective representations, then $f_\pi$ and $f_{\pi'}$ are cohomologous. Therefore $[f_\pi]$ is an invariant of equivalence classes of representations. A representation $\pi$ is equivalent to a strict representation if and only if $[f_\pi]$ is the trivial cohomology class.
\begin{dfn}
    The cohomology class $[f_\pi]$ attached to a projective representation $\pi$ is called the \emph{Schur multiplyer} of $\pi$.
\end{dfn}

\begin{ex}
    The representation of Example \ref{ex:Wh} corresponds to the 2-cocycle $f:(\ZZ/n)^2\times (\ZZ/n)^2\to \C^\times$ given by $f(a,b;c,d)=\omega^{bc}$. This can be shown to be cohomologous to the symplectic cocycle $f'(a,b;c,d)=\exp(\pi \i/n \cdot \det(a,b;c,d))$.
\end{ex}


\section{Abelian Configurations}\label{sec3}

A configuration $\Phi$ is \emph{strictly abelian} with respect to $(G,\rho)$, if it equals $\overline{\rho(G)v}$ where $\rho$ is a strict unitary representation of a finite abelian group $G$. In this section, we show that there are no strictly abelian $\etf{N}{n}$ for most combinations $n$ and $N$.  Given a strictly abelian reduced configuration $\Phi=\overline{\rho(G)v}$ generated by a finite abelian group $G$ together with a strict representation $\rho$, there is a filtration of subgroups $K\subseteq H\subseteq G$ such that $K=\ker\rho$ is the kernel, i.e. the set of all elements $g\in G$ for which $\rho(g)=I$, and $H=\{g\in G\ | \ \exists\lambda\in \CC,\ \rho(g)v=\lambda v\}$, where $v$ is any given vector in $\Phi$. The definition of $H$ does not depend on the choice of $v\in \Phi$, since $G$ is abelian. The next lemma shows that the group $G$ can be reduced to the size of $\Phi$.

\begin{lemma}
    Let $\Phi\in \conf{N}{n}$ be a strictly abelian \emph{reduced frame} with respect to $(G,\rho)$, and let $K\subseteq H\subseteq G$ be the aforementioned filtration. Then we have $|G/H|=|\Phi|$, and there exists a strict representation $\rho':G/H\to U(n)$ generating $\Phi$.
\end{lemma}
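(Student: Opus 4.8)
The plan is to use that a unitary representation of a finite abelian group is simultaneously diagonalizable, which reduces everything to bookkeeping about characters of $G$. Since $\rho$ is abelian and unitary, I would fix an orthonormal basis $e_1,\ldots,e_n$ in which $\rho(g)=\diag(\chi_1(g),\ldots,\chi_n(g))$ with $\chi_k\in\widehat{G}$, and write $v=\sum_k v_k e_k$. Because $\Phi=\overline{\rho(G)v}$ is a frame, the smallest $\rho$-invariant subspace containing $v$ is all of $\CC^n$; but that subspace meets each eigenspace of $\rho$ only in the line through the component of $v$ lying in it, so $\rho$ must be multiplicity-free and $v$ must have full support, i.e.\ the $\chi_k$ are pairwise distinct and every $v_k\neq 0$. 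For the cardinality claim I would look at the map $G\to\CP^{n-1}$, $g\mapsto[\rho(g)v]$: its image is precisely the set of lines spanned by the vectors of the reduced configuration $\Phi$, hence has $|\Phi|$ elements, while its fiber over $[\rho(g_0)v]$ is $\{g:\rho(g_0)^{-1}\rho(g)v\doteq v\}=\{g:g_0^{-1}g\in H\}=g_0H$. So the map factors through a bijection $G/H\to\{\text{lines of }\Phi\}$, giving $|G/H|=|\Phi|=N$.

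Next I would pin down $H$ concretely and build $\rho'$. With $v$ of full support, $\rho(g)v\doteq v$ holds iff $\chi_1(g)=\cdots=\chi_n(g)$, so $H=\bigcap_{j,k}\ker(\chi_j\chi_k^{-1})$ and, crucially, $\rho(h)=\chi_1(h)I_n$ is scalar for every $h\in H$. Put $\psi_k:=\chi_k\chi_1^{-1}\in\widehat{G}$; each $\psi_k$ kills $H$, hence descends to a character $\tilde\psi_k$ of $G/H$, and I define the strict representation $\rho':G/H\to U(n)$ by $\rho'(\bar g):=\diag(\tilde\psi_1(\bar g),\ldots,\tilde\psi_n(\bar g))$. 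Then $\rho'(\bar g)v=\chi_1(g)^{-1}\rho(g)v\doteq\rho(g)v$ for all $g$, so $\overline{\rho'(G/H)v}$ and $\overline{\rho(G)v}=\Phi$ span the same set of lines; being reduced of equal cardinality they coincide up to reordering and phase changes, i.e.\ $\rho'$ generates $\Phi$.

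The only genuinely substantive point — the main obstacle — is that $\rho$ itself need not descend to $G/H$: one only knows $K=\ker\rho\subseteq H$, and this inclusion may be strict, so naive passage to the quotient is unavailable. The fix is the explicit twist $\rho'=\chi_1^{-1}\cdot\rho$, which stays multiplicative exactly because each diagonal entry is a character; phrased cohomologically, the $2$-cocycle one gets by choosing coset representatives for $H$ in $G$ and transporting $\rho$ is a coboundary, but in the diagonal picture this needs no cohomology machinery. A secondary care point is the spanning $\Rightarrow$ (distinct characters, full support) reduction used at the start, which is exactly where the frame hypothesis enters.
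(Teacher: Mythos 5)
Your proof is correct, and it reaches the paper's conclusion by a genuinely different mechanism at the one substantive step. The paper never diagonalizes $\rho$: it observes that $H$ acts on every vector of $\Phi$ by a common character $\lambda:H\to\CC^\times$, extends $\lambda$ to a character $\chi$ of all of $G$ by invoking the divisibility of $\CC^\times$ (i.e.\ its injectivity as a $\ZZ$-module), and then twists, $\rho_0(g)=\chi(g)^{-1}\rho(g)$ --- exactly your twist with $\chi=\chi_1$. Your simultaneous diagonalization makes that extension explicit: $\chi_1$ is already a character of the whole group $G$ whose restriction to $H$ equals $\lambda$ (since all $v_k\neq 0$ forces $\chi_k(h)=\lambda(h)$ for every $k$), so no homological input is needed; and as a by-product you establish that $\rho$ is multiplicity-free and that $v$ has full support, facts the paper's argument never uses but which are indeed forced by the frame hypothesis. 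The trade-off is that the paper's argument is shorter and basis-free, while yours is more elementary and yields extra structural information about abelian frames; both proofs handle the cardinality claim $|G/H|=|\Phi|$ by the same orbit--stabilizer observation (the paper states it in one line, you spell out the fibers of $g\mapsto[\rho(g)v]$). Your closing diagnosis --- that the real obstacle is that $\rho$ itself need not factor through $G/H$ because $K=\ker\rho$ may be strictly smaller than $H$ --- is precisely the point the character twist is designed to address in both arguments.
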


\begin{proof} The fact that $|G/H|=|\Phi|$ follows from the fact that $G$ acts transitively on the image of $\Phi$ in $\CC\PP^{n-1}$ and $H$ stabilizes the image of $v$.
    Choose a unit vector $v\in \Phi$. Then there exists a function $\lambda:H\to \CC^\times$ such that for all $h\in H$, $hv=\lambda(h)v$, and $\lambda(-)$ does not depend on the choice of $v$. Clearly $\lambda$ is a homomorphism. It is well-known that since $\CC^\times$ is a divisible group, it is an injective $\ZZ$-module, and hence $\lambda$ extends to a homomorphism $\chi:G\to \CC^\times$, see \cite[Corollary 2.3.2]{Weibel1994}. We define a new (strict) representation $\rho_0:G\to U(n)$, given by $\rho_0(g)=\chi(g)^{-1}\pi(g)$. Then $\rho_0|_H$ is trivial on $v$, as well as any other vector in $\Phi$, and since $\Phi$ is a frame, this shows that $\rho_0|_H$ is the trivial representation. Hence $\rho_0$ descends to a representation $\rho':G/H\to U(n)$, and for all $g\in G$, $\rho(g)v\doteq \rho_0(g)v$, which implies that $\Phi=\overline{\rho'(G/H)v}$.
\end{proof}

We can now prove:
\begin{thm}\label{thm:abelian}
    There are no strictly abelian $\etf{N}{n}$ whenever $n(N-n)/(N-1)$ is not an integer.
\end{thm}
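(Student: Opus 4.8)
The plan is to exploit the previous lemma to reduce to the case where $G$ itself has order $N$, so that $\Phi = \overline{\rho(G)v}$ with $\rho: G \to U(n)$ a strict representation of an abelian group of order $N$. Since $G$ is abelian, every irreducible strict representation is $1$-dimensional, so $\rho$ decomposes as $\rho = \chi_1 \oplus \cdots \oplus \chi_n$ for characters $\chi_i: G \to S^1$, and in a suitable basis $\rho(g) = \diag(\chi_1(g), \ldots, \chi_n(g))$. Writing $v = (v_1, \ldots, v_n)$, the inner products in the configuration are $\langle \rho(g)v, \rho(h)v \rangle = \sum_{i=1}^n |v_i|^2 \chi_i(g)\overline{\chi_i(h)} = \sum_i |v_i|^2 \chi_i(gh^{-1})$, so each entry of the Gram matrix lies in the ring $\ZZ[\mu_m]$ scaled by the real numbers $|v_i|^2$ — and in fact, after recognizing that tightness forces $\sum_i |v_i|^2 \chi_i$ to behave like a scaled indicator, the key point is that the off-diagonal inner products are \emph{algebraic integers} (indeed, cyclotomic integers) once we know the frame is an ETF.

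The heart of the argument is then an algebraic-integrality obstruction. First I would observe that for an $\etf{N}{n}$ the common value of $|\langle \varphi_j, \varphi_k \rangle|^2$ equals the squared Welch bound $\frac{N-n}{n(N-1)}$. For a strictly abelian ETF, I claim the quantity $\frac{N-n}{n(N-1)}$, or rather a closely related quantity built from it, must be an algebraic integer. Concretely: the Gram matrix $G = \Phi^*\Phi$ satisfies $(nG/N)$ is an idempotent, hence $G$ has entries that are sums of roots of unity weighted by $|v_i|^2$; combining the ETF condition with the group structure one extracts that $\frac{n(N-n)}{N-1}$ — precisely the quantity in the hypothesis — is forced to be an algebraic integer, and being rational it must then be a rational integer. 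This is the contrapositive of the theorem. The cleanest route: let $\zeta$ range over the characters appearing; the fact that $Gv$ is a tight frame means $\sum_i |v_i|^2 \chi_i(x)$ vanishes for all $x \neq 0$ in $G/(\ker)$ and equals $n/N \cdot$ (something), which together with equiangularity pins down $|v_i|^2$ and forces the angle-squared times an integer to be a cyclotomic integer. Then $\frac{n(N-n)}{N-1} \in \ZZ$.

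The step I expect to be the main obstacle is making the integrality extraction rigorous — specifically, showing that the ETF condition, translated through the abelian/character decomposition, genuinely forces $\frac{n(N-n)}{N-1}$ (and not merely some rational multiple of it) to be a cyclotomic integer. One has to be careful about how the weights $|v_i|^2$ interact: a priori they are just nonnegative reals, not algebraic numbers, so the argument must first deduce that the relevant combinations are algebraic (this should follow from the system of equations imposed by tightness plus equiangularity, which has coefficients in $\ZZ[\mu_m]$), and only then invoke that a rational algebraic integer is an ordinary integer. A convenient reformulation is via the Naimark complement or via the associated regular two-graph / SRG when the angle set is small, but the direct character-sum computation is probably the most self-contained. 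Once integrality of $\frac{n(N-n)}{N-1}$ is established, the theorem follows immediately, since the hypothesis is exactly that this number is not an integer.
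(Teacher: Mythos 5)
Your overall strategy is the paper's: reduce to $|G|=N$ via the preceding lemma, diagonalize the abelian representation into characters, argue that a suitable integer multiple of the off-diagonal Gram entries consists of algebraic integers, and conclude by combining the Welch bound with the fact that a rational algebraic integer is an ordinary integer. However, the step you yourself flag as ``the main obstacle'' is exactly the step you have not supplied, and the one concrete claim you make toward it is false: tightness does \emph{not} imply that $\sum_i |v_i|^2\chi_i(x)$ vanishes for all $x\neq e$ --- that would say the Gram matrix is diagonal, i.e.\ that the $N>n$ frame vectors are pairwise orthogonal, which is impossible. Tightness is a condition on the frame operator $\Phi\Phi^*=\tfrac{N}{n}I_n$, not on the Gram. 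The missing computation is this: in the basis diagonalizing $\rho$, the $(i,j)$ entry of $\Phi\Phi^*=\sum_{g\in G}\rho(g)vv^*\rho(g)^*$ is $v_i\overline{v_j}\sum_g\chi_i(g)\overline{\chi_j(g)}$, which by orthogonality of characters equals $Nv_i\overline{v_j}$ if $\chi_i=\chi_j$ and $0$ otherwise; forcing this to be $\tfrac{N}{n}\delta_{ij}$ yields $|v_i|^2=1/n$ for every $i$ (so no weight vanishes) and then forces $\chi_1,\dots,\chi_n$ to be pairwise distinct. Consequently every off-diagonal Gram entry equals $\tfrac{1}{n}\sum_{i=1}^n\chi_i(g^{-1}h)$, and $n$ times it is a sum of $n$ roots of unity, hence an algebraic integer --- with no appeal to equiangularity at all. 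This is precisely the point at which the weights $|v_i|^2$ stop being ``a priori arbitrary nonnegative reals,'' and it is the content you needed to make rigorous.

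The paper reaches the same conclusion by a slightly more structural route: the matrices indexed by $G$ with $(x,y)$ entry $f(x^{-1}y)$ form a commutative $*$-algebra isomorphic to $\CC[G]$, whose minimal self-adjoint idempotents are $E_\chi=\tfrac{1}{|G|}\bigl(\chi(x^{-1}y)\bigr)_{x,y}$; since $\tfrac{n}{N}\Phi^*\Phi$ is a rank-$n$ self-adjoint idempotent in this algebra, it is a sum of $n$ distinct $E_{\chi_i}$, so $n\Phi^*\Phi$ has entries $\sum_i\chi_i(x^{-1}y)$. Either formulation closes your gap. Your endgame is then correct as stated: equiangularity identifies the common off-diagonal modulus of $n\Phi^*\Phi$ as $\sqrt{n(N-n)/(N-1)}$, the modulus of an algebraic integer is an algebraic integer, hence its square $n(N-n)/(N-1)$ is a rational algebraic integer and therefore an integer, contradicting the hypothesis.
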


\begin{proof}
    Let $\Phi=\overline{\pi(G)v}$ be a configuration, for $G$ abelian and $\pi$ strict. By the previous lemma, we may reduce to the case where $|G|=|\Phi|$ and $\Phi=Gv$. We index the Gram matrix $\Phi^*\Phi$ by the elements of the group $G$ and as a consequence the $(x,y)$-element is $\langle xv,yv\rangle=\langle v,x^{-1}yv\rangle=f(x^{-1}y)$ for some function $f:G\to \CC$. The collection $$\mathcal A_G:=\{M\ | \exists f:G\to \CC ,\ M_{x,y}=f(x^{-1}y)\}$$ is a matrix algebra, i.e. closed under matrix addition and multiplication. Moreover, it is isomorphic to the group ring $\CC[G]$, with the isomorphism given by $\left(f(x^{-1}y)\right)_{x,y} \mapsto \sum_{g\in G} f(g^{-1})g$. 
    This map is clearly a bijection, and the compatibility with addition and multiplication can be readily checked. It follows that $\mathcal A_G$ is a commutative matrix algebra.

    The minimal idempotents of $\CC[G]$ are given by the (multiplicative) characters of $G$, namely for a character $\chi:G\to \CC^\times$, they are given by $e_\chi=\tfrac{1}{|G|}\sum_{g\in G} \chi(g^{-1})g$, and corresponding to them are that matrices $E_\chi:=\tfrac{1}{|G|}\left(\chi(x^{-1}y)\right)_{x,y}$. Every other idempotent is a sum of distinct minimal idempotents in a unique way. Suppose now that $\Phi$ is a TF of size $n\times N$, $N=|G|$. Then we conclude that 
    $$\Phi^*\Phi=\frac{N}{n}\sum_i E_{\chi_i}, \ \ \text{for some characters}\ \chi_i.$$
     In particular $n\Phi^*\Phi$ is a matrix whose coefficients are algebraic integers, since the values of the characters $\chi_i$ are roots of unity. The absolute value of a complex algebraic integer is again an algebraic integer. If $\Phi$ is an ETF, then the modulus of an off-diagonal coefficient of $n\Phi^*\Phi$ is $n$ times the Welch bound \eqref{equ:welch}, which is $\sqrt{n(N-n)/(N-1)}$. The number $n(N-n)/(N-1)$ is not an integer by assumption, thus not an algebraic integer, so we get a contradiction.
\end{proof}

\begin{cor}
    There is no strict or projective cyclic group $\etf{N}{n}$ whenever $n(N-n)/(N-1)$ is not an integer.
\end{cor}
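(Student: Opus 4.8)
The strict case is immediate: a cyclic group is abelian, so a strict cyclic $\etf{N}{n}$ is in particular a strictly abelian $\etf{N}{n}$, and Theorem~\ref{thm:abelian} rules these out when $n(N-n)/(N-1)\notin\ZZ$. The content of the corollary is therefore the reduction of the projective case to the strict one, and the plan is to exploit the fact that the Schur multiplier of a cyclic group is trivial (equivalently, that $H^2(\ZZ/N;\CC^\times)=0$, since $\CC^\times$ is divisible, so that $N$-th powers exhaust it).

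First I would unpack what a projective cyclic $G$-configuration is. If $G=\ZZ/N$ and $\pi:G\to U(n)$ is a projective representation, then $\pi$ is determined up to phases by the single unitary $U:=\pi(1)$, and the relation $\pi(g g')\doteq\pi(g)\pi(g')$ forces only that $U^N\doteq I$, i.e. $U^N=\lambda I_n$ for some $\lambda\in S^1$. The configuration $\pi(G)v$ is then (a reindexing of) $\{v,Uv,U^2v,\ldots,U^{N-1}v\}$.

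Next comes the key step: rescale the generator. Choose $\zeta\in S^1$ with $\zeta^N=\lambda^{-1}$ (possible because every unimodular number has an $N$-th root in $S^1$), and set $U_0:=\zeta U$. Then $U_0^N=\zeta^N\lambda I_n=I_n$, so $U_0$ generates a genuine strict representation $\rho_0:\ZZ/N\to U(n)$, $\rho_0(k)=U_0^k=\zeta^k U^k$. Since $\rho_0(k)v=\zeta^k U^k v\doteq U^k v$ for every $k$, the strict configuration $\rho_0(G)v$ and the projective configuration $\pi(G)v$ agree up to rescaling each vector by a unimodular scalar and (possibly) reindexing; hence they are switching equivalent, and after discarding proportional copies so are $\overline{\rho_0(G)v}$ and $\overline{\pi(G)v}$. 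Being an $\etf{N}{n}$ is preserved under switching equivalence, so if $\pi(G)v$ were a projective cyclic $\etf{N}{n}$ then $\overline{\rho_0(G)v}$ would be a strictly abelian $\etf{N}{n}$, contradicting Theorem~\ref{thm:abelian} when $n(N-n)/(N-1)$ is not an integer.

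I do not anticipate a serious obstacle here; the only point requiring care is bookkeeping about switching equivalence (that the reindexing induced by passing from $\{U^k v\}$ to a reduced configuration matches on both sides, which holds because the two configurations coincide vector-by-vector up to phase before reduction). The conceptual heart is simply that a cyclic group has no nontrivial projective representations up to equivalence, realized concretely by the $N$-th-root rescaling of the generator.
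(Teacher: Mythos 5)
Your proposal is correct and follows essentially the same route as the paper: the paper's proof also writes the frame as $\{v,Xv,\ldots,X^{N-1}v\}$ with $X^N=\lambda I$, rescales $X\mapsto \lambda^{-1/N}X$ to obtain a strict cyclic representation, and then invokes Theorem~\ref{thm:abelian}. Your version merely spells out the switching-equivalence bookkeeping that the paper leaves implicit.
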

\begin{proof}
    Such a frame is of the form $\Phi=\{v,Xv,\ldots,X^{n-1}v\}$ for a unitary matrix $X$ having $X^n=\lambda I$, a scalar matrix. We can modify $X\mapsto \lambda^{-1/n}X$, so wlog $X^n=I$. $\Phi$ becomes now a strict cyclic group frame, and we finish with Theorem \ref{thm:abelian}.
\end{proof}

\begin{remark}
Theorem \ref{thm:abelian} rules out the existence of a strictly abelian $\etf{N}{n}$ for many $N, n$. In particular, there are no strict abelian $\etf{n^2}{n}$ nor $\etf{2n}{n}$.
\end{remark}
\begin{remark}
    Theorem \ref{thm:abelian} is no longer true when $\pi$ is genuinly projective. The famous example is the group $G=(\ZZ/n)^2$ with the Zauner $\etf{n^2}{n}$ which is known to exist for many $n$, and conjectured to exist for all $n$.
\end{remark}


\section{Characterization of dihedral $\etfn$}\label{sec4}

In this section, we initiate a characterization of  dihedral $\etfn$. We think of the Dihedral group as the simplest example of a non abelian group, and search for findings in contrast to the negative result we obtained in \S \ref{sec3}. First, we characterize the structure of the Gram matrix of a dihedral configuration, strict as well non-strict. Next, we give a characterization of dihedral tight frames. Subsequently, we characterize regular $\etfn$, proving that they do not exist in the strict case, and that they all come from certain structured skew Hadamard matrices in the sense of \eqref{eq:shm2etf} in the projective case. Finally, we turn to the general (non-regular) case and show that, up to switching equivalence, there are only finitely many dihedral $\etfn$.

\begin{lemma}\label{lem:proj}
    If $n$ is odd, then all projective unitary representations of $D_n$ are equivalent to a strict representation. If $n$ is even, then
    every projective unitary representation not equivalent to a strict representation is equivalent to a representation $\pi$ such that  $\pi(\mu)=M$ and $\pi(\tau)=T$ with $M^n=-I$, $T^2=I$ and $TM=M^{-1}T$. Conversely, when $n$ is even every projective representation with $M^n=-I$, $T^2=I$ and $TM=M^{-1}T$ is not equivalent to a strict representation.
\end{lemma}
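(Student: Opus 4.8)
The plan is to reduce the statement to elementary bookkeeping of unimodular scalars attached to the two generating matrices. First I would put an arbitrary projective representation $\pi$ in standard shape: after the harmless normalization $\pi(1)=I$ (replace $\pi(g)$ by $\pi(1)^{-1}\pi(g)$), set $M=\pi(\mu)$ and $T=\pi(\tau)$. Because $\pi$ is multiplicative up to a phase and $\mu^n=\tau^2=1$, $\tau\mu\tau^{-1}=\mu^{-1}$ in $D_n$, the relations force
\[
M^n=aI,\qquad T^2=bI,\qquad TMT^{-1}=cM^{-1}
\]
for some $a,b,c\in S^1$. Replacing $T$ by $b^{-1/2}T$ — an equivalence of projective representations, since it only rescales each $\pi(g)$ by a phase — I may assume $b=1$, and this changes neither $a$ nor $c$. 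Conjugating $M^n=aI$ by $T$ and using $TMT^{-1}=cM^{-1}$ then yields the single constraint $c^n=a^2$.

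Next I would exploit the remaining freedom. Rescaling $M\mapsto zM$ with $z\in S^1$ is again an equivalence and sends $(a,c)\mapsto(z^na,z^2c)$, preserving $c^n=a^2$, while conjugations leave $a,c$ untouched. So the whole question becomes: which pairs $(a,c)$ with $c^n=a^2$ can be scaled to $(1,1)$ — the relations of the standard strict representation $M^n=I$, $T^2=I$, $TM=M^{-1}T$ — and, when $n$ is even, to $(-1,1)$, the relations $M^n=-I$, $T^2=I$, $TM=M^{-1}T$. For $n$ odd, pick integers $p,q$ with $2p+nq=1$ and take $z=a^{-q}c^{-p}$; a one-line computation using $c^n=a^2$ shows $z^n=a^{-1}$ and $z^2=c^{-1}$, so $(a,c)$ normalizes to $(1,1)$. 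This proves that every projective representation of $D_n$ with $n$ odd is equivalent to a strict one.

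For $n$ even, I would introduce the sign $\e(\pi):=a\,c^{-n/2}$; since $(c^{n/2})^2=c^n=a^2$ we have $\e(\pi)\in\{\pm1\}$, and $\e(\pi)$ is unchanged by the scaling $(a,c)\mapsto(z^na,z^2c)$ and by conjugation, hence is an invariant of the equivalence class of $\pi$. Choosing a square root $z_1$ of $c^{-1}$ one computes $a\,z_1^{\,n}=a\,c^{-n/2}=\e(\pi)$, so $z_1^{\,n}=\e(\pi)\,a^{-1}$, and since $n$ is even $(-z_1)^n=z_1^{\,n}$ as well. Thus if $\e(\pi)=1$ then $z=z_1$ gives $z^n=a^{-1}$, $z^2=c^{-1}$, normalizing $(a,c)$ to $(1,1)$, so $\pi$ is equivalent to a strict representation; and if $\e(\pi)=-1$ then $z=z_1$ gives $z^n=-a^{-1}$, $z^2=c^{-1}$, normalizing $(a,c)$ to $(-1,1)$, i.e. $\pi$ is equivalent to a representation with $M^n=-I$, $T^2=I$, $TM=M^{-1}T$; this is the first even-case assertion. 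For the converse, such a representation has $a=-1$, $b=c=1$, hence $\e=-1$, whereas any strict representation has $a=b=c=1$ and hence $\e=1$; since $\e$ is an equivalence invariant, a representation with $M^n=-I$, $T^2=I$, $TM=M^{-1}T$ cannot be equivalent to a strict one.

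I expect no conceptual obstacle: this is just the explicit computation of $H^2(D_n;\CC^\times)$ (trivial for $n$ odd, $\ZZ/2$ for $n$ even) carried out by hand through the scalars $a,c$. The only points requiring care are verifying that the scaling equations for $z$ are solvable exactly in the cases claimed, and that $\e(\pi)$ is genuinely well defined and invariant under equivalence of projective representations — both of which reduce to one-line manipulations of $c^n=a^2$.
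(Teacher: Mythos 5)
Your proof is correct, and it follows the same basic strategy as the paper's — normalizing the unimodular scalars attached to $M$ and $T$ and tracking how the residual scalar transforms under rescaling — but it is organized differently in a way that is worth noting. The paper first forces $M^n=I$ and $T^2=I$, reduces the conjugation scalar $\varepsilon$ within the orbit $\varepsilon\mu_{n/2}$, and then proves the converse (non-equivalence to a strict representation) by a separate direct computation showing that $\beta^n=-1$ and $\beta^2=1$ cannot hold simultaneously for even $n$. You instead keep both scalars $a$ (from $M^n=aI$) and $c$ (from $TMT^{-1}=cM^{-1}$) free, observe the single constraint $c^n=a^2$ together with the action $(a,c)\mapsto(z^na,z^2c)$, and extract the $\ZZ/2$-valued quantity $a\,c^{-n/2}$ as a genuine equivalence invariant. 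This buys you two things: the converse direction becomes a one-line consequence of invariance rather than a separate argument, and the normalization to $(1,1)$ or $(-1,1)$ is handled uniformly for all even $n$, including $n\equiv 0\bmod 4$, where the paper's intermediate reduction to $\varepsilon=\pm1$ is only achievable if one simultaneously relaxes the constraint $M^n=I$ (a point the paper passes over quickly). Your Bezout choice $z=a^{-q}c^{-p}$ with $2p+nq=1$ for odd $n$ is likewise a clean substitute for the paper's observation that squaring is surjective on $\mu_n$. In effect you have computed $H^2(D_n;\CC^\times)$ by exhibiting an explicit isomorphism to $\{\pm1\}$ rather than only showing each class has a normal form, which is consistent with Remark \ref{rem:coh} of the paper.
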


\begin{proof}
Suppose that we are given a projective unitary representation $\pi':G\to U(n)$, and let $M'=\pi'(\mu)$ and $T'=\pi'(\tau)$. Then $T^{'2}=\alpha I$ for some $\alpha\in S^1$. Let $T=\beta T'$ with $\beta^2=\alpha^{-1}$. Then $T^2=I$. Likewise $M^{'n}=\gamma I$ for some $\gamma\in S^1$ and putting $\delta^n=\gamma^{-1}$ we define $M=\delta M'$ and have $M^n=I$. This defines an equivalent projective representation $\pi$ with $\pi(\mu)=M$ and $\pi(\tau)=T$. We have $TMT^{-1}=\varepsilon M^{-1}$ for some $\varepsilon\in S^1$. Raising this identity to the power $n$ shows that $\varepsilon^n=1$. If we modify $M\mapsto \eta M$ with $\eta^n=1$ we obtain the same identities, but with $\varepsilon$ being replaced by $\varepsilon\eta^2$. Thus when $n$ is odd we can arrive at $\varepsilon\eta^2=1$, proving the first part of the lemma.

Suppose now that $n$ is even. Then WLOG we may reduce to the case where $\varepsilon=\pm 1$. If $\varepsilon=1$ then $\pi$ is a strict representation. Otherwise $\varepsilon=-1$. Modifying $M\mapsto \exp(\pi \i/n)M$ we obtain the relations $M^n=-I$ and $TM=M^{-1}T$ as desired, proving the second part of the lemma. Finally suppose that $\pi$ is a projective representation with $M,T$ satisfying these relations. Let $M'=\alpha UMU^*,T'=\beta UTU^*$ with unitary $U$ and $\alpha,\beta\in S^1$. Then $T^{'2}=\alpha^2I$, $M^{'2}=-\beta^n I$ and $T'M'T^{'-1}=\beta^{2}M^{'-1}$. For $M',T'$ to satisfy the strict dihedral relations, we must have both $\beta^n=-1$ and $\beta^2=1$, which is impossible when $n$ is even.
\end{proof}

\begin{remark}\label{rem:coh}
     An equivalent statement of Lemma \ref{lem:proj} is  that when $n$ is even the cohomology group $H^2(D_n;\CC^\times)$ has precisely 2 elements, and when $n$ is odd it is the trivial group.
\end{remark}

Lemma \ref{lem:proj} simplifies the dihedral configuration situation. If the representation is strict, then it is equivalent to choosing two matrices $M,T\in U(n)$ satisfying the relation $M^n=T^2=I;\ TM=M^{-1}T$. The choice in \eqref{equ:MT} is an example, but it is not the only one up to equivalence of representations.
Having done so, we  $SD_nv$ we can generate the configurations 
 \begin{equation}\label{equ:SDnv}
 SD_nv = [v, Mv, \cdots, M^{n-1}v, Tv, MTv, \cdots, M^{n-1}Tv]\in \conf{2n}{n},
 \end{equation}
 which we term \emph{strict dihedral} configurations.
 
 On the other hand, if the representation is genuine projective (that is, not equivalent to a strict one), it boils down to choosing $\hat M, \hat T\in U(n)$ such that $\hat M^n = -I_n, \hat T^2=I_n, \hat T\hat M=\hat M^{-1}\hat T$. From this we can generate configurations 
\begin{equation}\label{equ:PDnv}
PD_nv=[v, \hat Mv, \cdots, \hat M^{n-1}v, \hat Tv, \hat M\hat Tv, \cdots, \hat M^{n-1}\hat Tv] \in \conf{2n}{n},
\end{equation}
which we term \emph{genuine projective} configurations. As an example, we can choose
\begin{equation}\label{equ:MT2}
\hat M= \diag(\omega_{2n}, \omega_{2n}^3,\cdots, \omega_{2n}^{2n-1}),\quad \hat T =\begin{bmatrix}
   & & & & 1\\
   & & & 1 & \\
&\\
  1 &  & & & 
\end{bmatrix}.
\end{equation}

In the sequel, $D_nv$ will denote a dihedral configuration, which can be strict or genuine projective.
 If $\pi$ is a projective representation of $D_n$ equivalent to a strict representation, we will say that $D_nv$ is a \emph{strict} dihedral configuration and denote it by $SD_nv$ (\eqref{equ:SDnv} being a particular switching equivalent realization). If $\pi$ is not equivalent to a strict representation, we will say that $D_nv$ is a \emph{(genuine) projective} dihedral configuration and denote it by $PD_nv$ (\eqref{equ:PDnv} being a particular switching equivalent realization). 


\subsection{Gram matrices of dihedral configurations}

\begin{thm}\label{thm:circpart}
Let $\Phi=D_nv$ be a dihedral $\conf{2n}{n}$. Up to switching equivalence the Gram matrix $\Phi^*\Phi$ can be partitioned as 
\be \label{eq:circpart} 
\Phi^*\Phi \ = \ \begin{bmatrix}
        A & B\\ B^\top & A^\top
    \end{bmatrix}, \quad\text{$A$ hermitian, and $B$ real.}
 \ee
 Moreover, if $\Phi$ is strict (resp. projective), then both $A$ and $B$ can be taken to be circulant (resp. negacirculant). 
 Conversely, if $\Phi$ is reduced and $\Phi^*\Phi$ is switching equivalent to a matrix of the form \eqref{eq:circpart}, with $A,B$ being circulant (resp. negacirculant), then it is a strict (resp. projective) dihedral configuration. 
\end{thm}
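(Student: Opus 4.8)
The plan is to compute the Gram matrix $\Phi^*\Phi$ directly in one of the explicit realizations \eqref{equ:SDnv} or \eqref{equ:PDnv}, read off its block structure, and then run the converse through Lemma \ref{lem:red_aut}. First I would fix the index set $\underline{2n}$ in the order $v,Mv,\dots,M^{n-1}v,Tv,MTv,\dots,M^{n-1}Tv$ (writing $M,T$ for $\hat M,\hat T$ in the projective case), and compute the four blocks of $\Phi^*\Phi$. The $(i,j)$ entry of the top-left block is $\langle M^iv,M^jv\rangle=\langle v,M^{j-i}v\rangle$ (using $M$ unitary), which depends only on $j-i$; since $M^n=\pm I$, this is circulant when $M^n=I$ and negacirculant when $M^n=-I$. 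The top-right block has entries $\langle M^iv,M^jTv\rangle=\langle v,M^{j-i}Tv\rangle$, again a function of $j-i$ of the same (circulant/negacirculant) type. The bottom-left block is the conjugate transpose of the top-right, and the bottom-right block has entries $\langle M^iTv,M^jTv\rangle=\langle Tv,M^{j-i}Tv\rangle=\langle v,TM^{j-i}Tv\rangle=\langle v,M^{i-j}v\rangle=\overline{\langle v,M^{j-i}v\rangle}$, using $TM=M^{-1}T$ and $T^2=I$; this equals $A^\top$ once we observe that the top-left block $A$ is Hermitian (it is a diagonal block of a Gram matrix), so $\bar A=A^\top$. So the Gram is $[A,B;B^*,A^\top]$ with $A$ Hermitian.

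The remaining point for the forward direction is that $B$ can be taken \emph{real}. Here I would use the freedom of switching equivalence: conjugating the Gram by a diagonal phase monomial matrix $\diag(\zeta)$ supported on the second block of indices multiplies $B$ by $\diag(\zeta)$ on one side; more precisely, replacing the fiducial realization by a phase rotation, or directly applying a diagonal unimodular change on the $Tv$-block, one can absorb the phases of the entries of $B$. The clean way is to note that $B=\langle M^iv, M^jTv\rangle$ and $B^*$ must equal $B^\top$ for the stated form; combining Hermiticity of the whole Gram with the circulant/negacirculant symmetry forces $B$ to be a circulant (resp. negacirculant) matrix that is \emph{real} after a single global phase adjustment of $T\mapsto \alpha T$ (note $T^2=I$ only pins $\alpha^2$, leaving a sign/phase to exploit). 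I would spell out that this global rescaling of $T$ is exactly a switching equivalence on $\Phi$ (a phase monomial matrix permuting/scaling the last $n$ columns), so it is permitted; the reality of $B$ then follows because $\langle v,TM^{j-i}Tv\rangle$ pairs up with $\overline{\langle v,M^{j-i}Tv\rangle}$ through $T^2=I$, forcing the circulant symbol of $B$ to be symmetric under conjugation, i.e. real.

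For the converse, suppose $\Phi\in\conf{2n}{n}$ is reduced and $\Phi^*\Phi$ is switching equivalent to $[A,B;B^\top,A^\top]$ with $A$ Hermitian circulant (resp. negacirculant) and $B$ real circulant (resp. negacirculant). I would exhibit an explicit embedding $D_n\hookrightarrow \Aut(\Phi^*\Phi)$: let $S\in\mon(2n)$ be the block matrix $\diag(P,P)$ where $P=P_\sigma$ is the $n$-cycle (resp. $P$ the signed $n$-cycle $\negc([0,\dots,0,1])$ in the projective case), and let $R\in\mon(2n)$ be the block antidiagonal $[0,T_0;T_0,0]$ with $T_0$ the appropriate reflection permutation. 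A direct check using that $A,B$ are circulant/negacirculant and $B$ real shows $S(\Phi^*\Phi)S^*=\Phi^*\Phi$ and $R(\Phi^*\Phi)R^*=\Phi^*\Phi$, and that $S,R$ satisfy the dihedral relations $S^n=\pm I$, $R^2=I$, $RS=S^{-1}R$ modulo scalars — so they generate a copy of $D_n$ inside $\Aut(\Phi^*\Phi)$, strict or genuinely projective according to the sign. This action is transitive on $\underline{2n}$ (the $S$-orbit of index $0$ is the first block, and $R$ sends it into the second block). Then Lemma \ref{lem:red_aut} applies verbatim and yields that $\Phi$ is switching equivalent to a reduced dihedral configuration $\overline{D_nv}$, strict or projective as dictated by the Schur multiplier of the $D_n$-action we built, which is nontrivial precisely in the negacirculant case by Lemma \ref{lem:proj}.

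The main obstacle I anticipate is \textbf{not} the forward computation — that is routine — but making the "$B$ real" reduction and the "strict vs. projective" dichotomy genuinely airtight under switching equivalence: one must be careful that the phase adjustments used to realize $B$ real, and the automorphisms $S,R$ constructed in the converse, are honest phase monomial matrices (elements of $\mon(2n)$), that the cocycle class of the constructed $D_n\to\Aut(\Phi^*\Phi)$ really matches the circulant/negacirculant alternative (via Lemma \ref{lem:proj} and Remark \ref{rem:coh}), and that reducedness of $\Phi$ is exactly what is needed for Lemma \ref{lem:red_aut} to convert transitivity into a dihedral group-frame structure.
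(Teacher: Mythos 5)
Your proposal is correct and follows essentially the same route as the paper: a direct block computation of the Gram in the normalized realizations \eqref{equ:SDnv}/\eqref{equ:PDnv} for the forward direction, and an embedding of $D_n$ into $\Aut(\Phi^*\Phi)$ via the (projective) regular representation by monomial matrices, followed by Lemma \ref{lem:red_aut}, for the converse. The one simplification you overlook is that once $T^2=I$ is fixed (as Lemma \ref{lem:proj} already arranges), no further phase adjustment of $T$ is needed to make $B$ real: the dihedral relations give the bottom-left block entrywise as $B^\top$, while Hermiticity of the Gram gives it as $B^*$, so $B^\top=B^*$ forces $B$ real on the spot.
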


\begin{proof}
We first treat the strict case. Let $\rho$ be a strict representation of $D_n$ generating $\Phi$. Write $M=\rho(\mu)$ and $T=\rho(\tau)$. Then modifying the representation by an equivalence, implies a unitary rotation of the configuration, so WLOG $M^n=T^2=I$ and $MT=TM^{-1}$. We further reorder $\Phi=\{v,Mv,\ldots,M^{n-1}v,Tv,MTv,\ldots,M^{n-1}Tv\}$ if necessary. With respect to this ordering we partition the Gram $\Phi^*\Phi=[A,B;C,D]$ into four $n\times n$ blocks. Then $A_{i,j}=\langle M^iv,M^jv\rangle=\langle v,M^{j-i}v\rangle$, showing that $A$ is circulant. Similarly $B_{i,j}=\langle M^iv,M^jTv\rangle=\langle v,M^{j-i}Tv\rangle$, and $B$ is circulant as well. We then compute $C_{i,j}=\langle M^iTv,M^jv\rangle=\langle TM^{-i}v,M^jv\rangle=\langle M^{-i}v,TM^jv\rangle=\langle M^{-i}v,M^{-j}Tv\rangle=\langle v,M^{i-j}Tv\rangle=B_{j,i}$, and $C=B^\top$. Similarly we show that $D=A^\top$. Since the Gram is hermitian, this must imply that $B^\top=B^*$, which shows that $B$ is real. Likewise $A$ is hermitian.

 In the projective case we proceed similarly. By  Lemma \ref{lem:proj} we have $\Phi = PD_nv$ as shown in \eqref{equ:PDnv}.
We partition the Gram  matrix similarly and notice that $A_{i,j}=\langle v,\hat M^{j-i}v\rangle$. Using $\hat M^n=-I$ we see that $A_{i,j}=A_{0,j-i}$ if $i\le j$, and $A_{i,j}=-A_{0,n+j-i}$ if $i>j$, which is by definition a negacirculant matrix. The analysis for $B,C,D$ is similar, leaving the details to the reader.\\

Conversely, suppose that $\Phi^*\Phi=[A,B;B^\top,A^\top]$ with $A,B$ circulant, $A$ hermitian and $B$ real. Consider the regular representation $\rho:D_n\to U(2n)$, which is the permutation representation given by the left multiplication of $D_n$ on itself. We consider $\rho$ given with respect to the ordering $1,\mu,\ldots,\mu^{n-1},\tau,\mu\tau,\ldots,\mu^{n-1}\tau$. Then $\rho(\mu)=\cir([0,\ldots,0,1])\oplus \cir([0,\ldots,0,1])$ and $\rho(\tau)$ is the permutation matrix of the permutation $(0,n)(1,2n-1)(2,2n-2)\cdots(n-1,n+1)$. The matrices that centralize $\rho(\mu)$ are precisely the matrices $[A,B;C,D]$ with $A,B,C,D$ circulant of size $n\times n$. Among those, the matrices that in addition centralize $\rho(\tau)$ are precisely those which satisfy the extra conditions $D=A^\top$ and $C=B^\top$. Then $\rho$ gives an embedding $D_n\hookrightarrow \Aut(\Phi^*\Phi)$, with transitive action on the set $\{0,\ldots,2n-1\}$, and Lemma \ref{lem:red_aut} implies that $\Phi$ is strictly dihedral.\\

If $\Phi^*\Phi=[A,B;B^\top,A^\top]$ with $A,B$ negacirculant, we may define a projective monomial representation $\rho':D_n\to U(2n)$ by letting $\rho'(\mu)=\negc([0,0,\ldots,1])\oplus\negc([0,0,\ldots,1])$ and $\rho'(\tau)=\diag(I_{n+1},-I_{n-1})\rho(\tau)$. We extend the definition of $\rho'$ to the entire dihedral group by $\rho'(\mu^j):=\rho'(\mu)^j$ and $\rho'(\mu^j\tau):=\rho'(\mu)^j\rho'(\tau)$, $0\le j\le n-1$. Then letting $M=\rho'(\mu)$ and $T=\rho'(\tau)$, it can be checked that $M^n=-I$, $T^2=I$ and $TM=M^{-1}T$. From this we infer that $\rho'$ is a projective representation. The centralizer of $\rho'(\mu)$ is the set of all matrices of the kind $[A,B;C,D]$ with $A,B,C,D$ negacirculant. Among these, the ones that centralize $\rho'(\tau)$ are those with $D=A^\top$ and $C=B^\top$. As before we apply Lemma \ref{lem:red_aut} to prove that $\Phi$ is projective dihedral.
\end{proof}

The following result is an immediate consequence of Theorem~\ref{thm:circpart}.

\begin{proposition}\label{pro:pdn}
    \begin{itemize}
        \item[]
        \item[(a)] The collection $\mathcal{SD}_{n}$ of all complex $2n\times 2n$ matrices of the form $[A,B;B^\top,A^\top]$ with $A,B$ complex circulant circulant matrices, is a matrix $*$-algebra, i.e. closed under matrix addition, multiplication and conjugate-transpose. 
        \item[(b)] The collection $\mathcal {PD}_{n}$ of all complex $2n\times 2n$ matrices of the form $[A,B;B^\top,A^\top]$ with with $A,B$ negacirculant matrices, is a matrix $*$-algebra.
    \end{itemize}
\end{proposition}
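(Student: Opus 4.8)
The plan is to derive Proposition~\ref{pro:pdn} directly from the structural description of the centralizers that already appeared in the proof of Theorem~\ref{thm:circpart}, so that almost no new computation is needed. Recall from that proof that $\mathcal{SD}_n$ is precisely the set of matrices in $\CC^{2n\times 2n}$ that centralize both $\rho(\mu)=\cir([0,\ldots,0,1])\oplus\cir([0,\ldots,0,1])$ and $\rho(\tau)$, the permutation matrix of $(0,n)(1,2n-1)\cdots(n-1,n+1)$; likewise $\mathcal{PD}_n$ is the common centralizer of $\rho'(\mu)=\negc([0,\ldots,0,1])\oplus\negc([0,\ldots,0,1])$ and $\rho'(\tau)=\diag(I_{n+1},-I_{n-1})\rho(\tau)$. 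A common centralizer of a set of matrices is automatically closed under addition and multiplication and contains $I_{2n}$, so parts (a) and (b) are immediate as \emph{algebras}; the only thing requiring an argument is closure under conjugate-transpose.

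For the $*$-closure I would argue as follows. If $X$ commutes with a unitary matrix $U$, then $X^*$ commutes with $U$ as well: from $XU=UX$ we get $U^*X^*=X^*U^*$, and multiplying on both sides by $U$ and using $UU^*=U^*U=I$ yields $X^*U=UX^*$. Now $\rho(\mu)$ and $\rho(\tau)$ are permutation matrices, hence unitary, so $\mathcal{SD}_n$ is closed under $*$; similarly $\rho'(\mu)$ is a direct sum of two copies of $\negc([0,\ldots,0,1])$, which is a real orthogonal matrix (each row and column has a single nonzero entry $\pm1$), and $\rho'(\tau)=\diag(I_{n+1},-I_{n-1})\rho(\tau)$ is a signed permutation matrix, hence orthogonal, so $\mathcal{PD}_n$ is closed under $*$ too. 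This completes the proof.

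Alternatively, and perhaps more transparently, one can verify $*$-closure by hand from the block description, since this is genuinely elementary: given $H=[A,B;B^\top,A^\top]$ one computes $H^*=[A^*,(B^\top)^*;B^*,(A^\top)^*]=[A^*,\bar B;B^*,\bar A]$. For $A$ circulant (resp.\ negacirculant), $A^*=\bar A^\top$ is again circulant (resp.\ negacirculant) because these classes are closed under transpose and under entrywise conjugation, as noted in Subsection~\ref{subsec2.1}; and $\bar B$ is circulant (resp.\ negacirculant) for the same reason. Writing $A':=A^*$ and $B':=\bar B$, one checks $(A')^\top = (A^*)^\top = \bar A = \overline{(B')^{-\top}}$... rather, one simply observes that the lower-left block of $H^*$ is $B^* = \overline{B^\top} = (\bar B)^\top = (B')^\top$ and the lower-right block is $\bar A = (A^*)^{\overline{\phantom{x}}\,\top}$, i.e.\ $(A')^\top$, so $H^*=[A',B';(B')^\top,(A')^\top]$ has exactly the required form. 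Closure under addition is obvious, and closure under multiplication follows because the set of circulant (resp.\ negacirculant) matrices is a commutative algebra (Subsection~\ref{subsec2.1}): multiplying two blocks of the same type stays in that type, so $[A_1,B_1;B_1^\top,A_1^\top][A_2,B_2;B_2^\top,A_2^\top]$ has all four blocks circulant (resp.\ negacirculant), and one checks the top-left block equals the transpose of the bottom-right block and the top-right block equals the transpose of the bottom-left block using commutativity, e.g.\ $A_1A_2+B_1B_2^\top$ versus $(A_2^\top A_1^\top + B_2^\top B_1^\top)^\top = A_1 A_2 + B_1 B_2^\top$ since circulant (resp.\ negacirculant) matrices commute.

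The only mild subtlety — and the step I would be most careful about — is the bookkeeping in the multiplicative check: one must use that circulant (resp.\ negacirculant) matrices commute with one another, and that transposition reverses products, to see that the product of two matrices of the form $[A,B;B^\top,A^\top]$ again has its bottom row equal to the transpose-pair of its top row. Given that the commutativity of these matrix algebras is already recorded in Subsection~\ref{subsec2.1}, this is routine, and I would present it as a one-line verification rather than a computation.
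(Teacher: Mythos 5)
Your proof is correct and takes essentially the same approach as the paper, which itself offers exactly these two routes (direct block verification, or identifying $\mathcal{SD}_n$ and $\mathcal{PD}_n$ as the centralizing algebras of $\rho$ and $\rho'$ from the proof of Theorem~\ref{thm:circpart}); your observation that the centralizer of a unitary is automatically $*$-closed nicely supplies the conjugate-transpose closure that the paper leaves implicit. One tiny slip in your illustrative product check: the bottom-right block of the product is $B_1^\top B_2 + A_1^\top A_2^\top$, not $A_2^\top A_1^\top + B_2^\top B_1^\top$, but the conclusion via commutativity and the reversal of products under transposition is unaffected.
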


\begin{proof} This can be checked directly. Alternatively,
Part (a) (resp. (b)) follows from the fact that this set is the centralizing algebra of the representation $\rho$ (resp. $\rho'$) in the proof of the previous theorem.
\end{proof} 

A well-known fact about finite dimensional complex $*$-algebras is that they are \emph{semisimple} (see e.g. \cite{kenneth1996c}). An algebra is by definition semisimple, if it is a direct sum of minimal (left) ideals. The Artin-Wedderburn theorem then says that such finite dimensional algebras are isomorphic to a product of full matrix algebras over $\CC$. We shall compute the decompositions of $\mathcal{SD}_{n}$ and $\mathcal {PD}_{n}$ explicitly in Section \ref{sec:decomp} below. For more examples and a detailed discussion on Artin-Wedderburn decompositions of partitioned matrix algebras, see \cite[\S 3]{BENAV2024113908}.

\subsection{Decompositions of $\mathcal{SD}_{n}$ and $\mathcal {PD}_{n}$}\label{sec:decomp}

\begin{dfn}
    Let $\zeta$ be an $n$th root of unity. We define the \emph{cyclotomic} idempotent to be the circulant matrix
    $$E_\zeta\ := \ \tfrac{1}{n}\left(\zeta^{j-i} \right)_{i,j}.$$
    Similary,  let $\zeta$ be an $n$th root of $-1$. We define the \emph{nega-cyclotomic idempotent} to be the negacirculant matrix $$N_\zeta:=\tfrac{1}{n}(\chi(i,j)\zeta^{j-i})_{i,j}, \text{ where } \chi(i,j)=\begin{cases} 1 & i\le j\\ -1 & i>j\end{cases}.$$
\end{dfn}
In the following, let $\mathcal C_n$ (respectively $\mathcal N_n$ ) denote the $*$-alegbra of all complex circulant (respectively negacirculant) matrices. The next lemma shows that the idempotents $E_\zeta$ and $N_\zeta$ are responsible for the decomposition of these algebras.

\begin{lemma}\label{lem:circ_decomp}
    \begin{itemize}
        \item[]
        \item[(a)] There is an isomorphism of algebras $$ \mathcal C_n \cong \CC^n=\bigoplus_{\zeta\in \mu_n}\CC ,$$ where the map $\CC^n\to \mathcal C_n$ is given by $c\mapsto \sum c_jE_{\zeta^j}$, where $\zeta$ is a fixed primitive $n$th root of unity. The inverse map $\mathcal C_n\to \CC^n$ is given by $A\mapsto (a_\zeta(A))_\zeta$ where $a_\zeta(A)$ is defined by the equation $AE_\zeta=a_\zeta(A)E_\zeta$. The $*$ operator on $\mathcal C_n$ corresponds via this isomorphism to the complex conjugation on $\CC^n$.
        \item[(b)] There is an isomorphism of algebras $$ \mathcal N_n \cong \CC^n=\bigoplus_{\zeta\in \mu_{2n}\setminus\mu_n}\CC,$$ where the map $\CC^n\to \mathcal N_n$ is given by $c\mapsto \sum c_jN_{\zeta^j}$, where $\zeta$ is a fixed primitive $n$th root of $-1$.  The inverse  map $\mathcal N_n\to \CC^n$ is given by $A\mapsto (a_\zeta(A))_\zeta$ where $a_\zeta(A)$ is defined by the equation $AN_\zeta=a_\zeta(A)N_\zeta$. The $*$ operator on $\mathcal N_n$ corresponds via this isomorphism to the complex conjugation on $\CC^n$.
    \end{itemize}
\end{lemma}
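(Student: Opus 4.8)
The plan is to prove both parts simultaneously by recognizing that $\mathcal C_n$ and $\mathcal N_n$ are each generated as algebras by a single element, and then doing an eigendecomposition of that generator. For part (a), recall from Section \ref{subsec2.1} that every circulant matrix is a polynomial in $S := \cir([0,\ldots,0,1])$, and that $S^n = I$. For part (b), every negacirculant matrix is a polynomial in $N := \negc([0,\ldots,0,1])$, and one checks directly that $N^n = -I$. Thus $\mathcal C_n \cong \CC[x]/(x^n-1)$ and $\mathcal N_n \cong \CC[x]/(x^n+1)$ as $\CC$-algebras (the map sending $x$ to the generator is clearly a surjective algebra homomorphism, and a dimension count — both sides have $\CC$-dimension $n$ — shows it is an isomorphism). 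Since $x^n-1 = \prod_{\zeta\in\mu_n}(x-\zeta)$ and $x^n+1 = \prod_{\zeta\in\mu_{2n}\setminus\mu_n}(x-\zeta)$ each factor into distinct linear factors, the Chinese Remainder Theorem gives the desired algebra isomorphisms with $\bigoplus_\zeta \CC$.

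Next I would identify the idempotents. Under $\CC[x]/(x^n-1) \cong \bigoplus_{\zeta\in\mu_n}\CC$, the primitive idempotent corresponding to the factor at $\zeta$ is the Lagrange interpolation polynomial evaluated at the generator, and a direct computation shows this is exactly $E_\zeta = \tfrac1n(\zeta^{j-i})_{i,j}$: indeed one verifies $S \cdot E_\zeta = \zeta E_\zeta$ (so $E_\zeta$ is a $\zeta$-eigen-projection), $E_\zeta E_{\zeta'} = \delta_{\zeta\zeta'}E_\zeta$, and $\sum_{\zeta}E_\zeta = I$; these are short computations with geometric sums of roots of unity. Consequently $A E_\zeta = a_\zeta(A)E_\zeta$ for a unique scalar $a_\zeta(A)$ (namely, if $A = p(S)$ then $a_\zeta(A) = p(\zeta)$), and $A \mapsto (a_\zeta(A))_\zeta$ is the inverse of $c \mapsto \sum_j c_j E_{\zeta^j}$. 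The same argument with $S$ replaced by $N$ and $\mu_n$ replaced by $\mu_{2n}\setminus\mu_n$ handles the negacyclotomic idempotents $N_\zeta$, using $N\cdot N_\zeta = \zeta N_\zeta$; the sign pattern $\chi(i,j)$ in the definition of $N_\zeta$ is precisely what makes $N_\zeta$ negacirculant rather than circulant, and what makes the eigenvalue relation hold with $\zeta^n = -1$.

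Finally, for the statement about the $*$-operator: since $S^* = S^\top = S^{-1} = S^{n-1}$ and $S$ is unitary, for $A = p(S)$ we have $A^* = \bar p(S^{-1})$, hence $a_\zeta(A^*) = \bar p(\zeta^{-1}) = \overline{p(\zeta)} = \overline{a_\zeta(A)}$ (using $|\zeta|=1$), which says the isomorphism intertwines $*$ on $\mathcal C_n$ with complex conjugation on $\CC^n$. Again the negacirculant case is identical once one checks $N^* = N^{-1}$, which follows since $N$ is unitary (its columns are an orthonormal set, being a signed permutation of the standard basis up to the one sign flip). I do not anticipate a serious obstacle here; the only mild subtlety is bookkeeping the index sets correctly — for $\mathcal N_n$ the relevant roots are the $2n$-th roots of unity that are \emph{not} $n$-th roots of unity, i.e. the primitive-in-sign solutions of $\zeta^n=-1$, and one should fix a primitive such $\zeta$ (e.g. $\zeta = \omega_{2n}$) so that $\{\zeta^j : 0\le j\le n-1\}$ enumerates them, matching the indexing $c \mapsto \sum_j c_j N_{\zeta^j}$ in the statement.
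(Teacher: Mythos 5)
Your strategy is sound and is a legitimate alternative to the paper's proof, which is more direct: the paper simply checks that the $E_{\zeta}$ (resp.\ $N_\zeta$) form a family of mutually orthogonal, self-adjoint idempotents, deduces linear independence, and concludes by counting dimensions; the inverse formula and the behaviour of $*$ then fall out immediately from orthogonality and self-adjointness. Your route through $\CC[x]/(x^n\mp 1)$ and the Chinese Remainder Theorem proves the same thing and buys a little extra, namely the explicit DFT description $a_\zeta(p(S))=p(\zeta)$, at the cost of having to identify the CRT idempotents with the $E_\zeta$ via Lagrange interpolation. For part (a) everything you wrote checks out.

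There is, however, a concrete error in part (b): with your generator $N=\negc([0,\ldots,0,1])$ one has $Ne_j=-e_{j+1}$ for $j<n-1$ and $Ne_{n-1}=e_0$, hence $N^n=(-1)^{n-1}I$, which equals $-I$ only when $n$ is even. For odd $n$ your $N$ satisfies $x^n-1$, so the claimed isomorphism $\mathcal N_n\cong\CC[x]/(x^n+1)$ is not established by this generator, and since the lemma is used in the paper for odd $n$ as well (the decomposition of $\mathcal{PD}_n$ distinguishes the two parities), this is a real gap as written. Relatedly, with this $N$ the eigenvalue relation is $NN_\zeta=-\zeta N_\zeta$, not $\zeta N_\zeta$. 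Both issues disappear if you instead take the generator $N'=\negc([0,1,0,\ldots,0])$, for which $(N')^n=-I$ for every $n$ (or if you rescale your $N$ by a suitable root of unity); none of this affects the statement of the lemma itself, because $a_\zeta(A)$ is defined intrinsically by $AN_\zeta=a_\zeta(A)N_\zeta$ rather than by evaluation of a polynomial at $\zeta$. One further caution: the paper's displayed formula $N_\zeta=\tfrac1n(\chi(i,j)\zeta^{j-i})$ is only idempotent if the exponent $j-i$ is read modulo $n$ in $\{0,\ldots,n-1\}$ (equivalently, $N_\zeta=\tfrac1n(\zeta^{j-i})$ with genuine integer exponents and no sign), so the "short computation with geometric sums" you defer does require settling this convention before it goes through.
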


\begin{proof}
    We will prove (a), the proof of (b) being similar. The idempotents $E_{\zeta^i}$, $0\le i\le n-1$ are mutually orthogonal and self-adjoint. Therefore they are linearly independent, and by dimension counting, the map $\CC^n\to \mathcal C_n$ is a bijection. This map also respects multiplication on both sides from the orthogonality of the idempotents. It is easy to verify the formula for the inverse isomorphism, by multiplying the image of the first isomorphism by $E_\zeta$. Finally, the $*$-operator corresponds to complex conjugation on $\CC^n$ since the $E_\zeta$ are self-adjoint.
\end{proof}
Note that the isomorphism in (a) is just the discrete Fourier transform. We also note that the algebras $\mathcal C_n$ and $\mathcal{N}_n$ are isomorphic as $*$-algebras, as both are isomorphic to $\CC^n$. It is possible to give an isomorphism directly, by sending the generator $\cir([0,0,\ldots,1])\mapsto \negc([0,0,\ldots, \zeta])$ where $\zeta^n=-1$. This is, however, not an isomorphism of $*$-algebras.  We next prove similar results for $\mathcal{SD}_n$ and $\mathcal{PD}_n$.
Recall that $M_r(\mathbb{F})$ is the space of $r\times r$ matrices over a field $\mathbb{F}$.

\begin{proposition}
    \begin{itemize}
        \item[]
        \item[(a)] For any unordered pair of conjugate $n$th roots of $1$, $\{\zeta,\bar\zeta\}$, $\zeta\notin\mathbb R$, consider the idempotents $\mathcal E_{\zeta,\bar\zeta}:=\diag(E_\zeta+E_{\bar\zeta},E_\zeta+E_{\bar\zeta})\in \mathcal{SD}_n$. For $\zeta\in \{-1,1\}$, let $\mathcal E_\zeta:=\diag(E_\zeta,E_\zeta)$. Then the system $\{\mathcal E_{\zeta,\bar\zeta}\}\cup\{\mathcal E_{\pm 1}\}$ is a system of central self-adjoint and mutually orthogonal idempotents, and there is a $\CC$-algebra isomorphism 
        \be\label{eq:isom_dn} \mathcal{SD}_{n}\cong \begin{cases} \CC^2 \times M_2(\CC)^{(n-1)/2}, & n \text{ odd}\\ \CC^4\times M_2(\CC)^{n/2-1}, & n \text{ even}\end{cases}.\ee 
        Under this isomorphism, the $*$-operator becomes the conjugate-transpose on each component.
        \item[(b)] For any unordered pair of conjugate $n$th roots of $-1$, $\{\zeta,\bar\zeta\}$, $\zeta\notin\mathbb R$, consider the idempotents $\mathcal N_{\zeta,\bar\zeta}:=\diag(N_\zeta+N_{\bar\zeta},N_\zeta+N_{\bar\zeta})\in \mathcal{PD}_n$. For $\zeta=-1$, let $\mathcal N_{-1}:=\diag(N_{-1},N_{-1})$. Then the system $\{\mathcal N_{\zeta,\bar\zeta}\}\cup\{\mathcal N_{-1}\}$ is a system of central self-adjoint and mutually orthogonal idempotents, and there is a $\CC$-algebra isomorphism  
        \be \label{eq:isom_pdn} \mathcal {PD}_{n}\cong \begin{cases} \CC^2 \times M_2(\CC)^{(n-1)/2}, & n \text{ odd}\\ M_2(\CC)^{n/2}, & n \text{ even}\end{cases}.\ee
         Under this isomorphism, the $*$-operator becomes the conjugate-transpose on each component.
    \end{itemize}
\end{proposition}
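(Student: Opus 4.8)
The plan is to handle both parts with the same device: if $\mathcal A$ is a complex $*$-algebra with $1$ and $f_1,\dots,f_m$ is a family of central, mutually orthogonal, self-adjoint idempotents with $\sum_k f_k=1$, then $\mathcal A=\bigoplus_k f_k\mathcal A f_k=\bigoplus_k f_k\mathcal A$ as $*$-algebras, each summand being a $*$-subalgebra with unit $f_k$ and a direct ring factor of $\mathcal A$; so one only needs to check that the proposed systems are of this type and then identify each corner $f_k\mathcal A f_k$. For (a) I would first record from Lemma~\ref{lem:circ_decomp}(a) that the cyclotomic idempotents $E_\zeta$ $(\zeta\in\mu_n)$ are pairwise orthogonal Hermitian idempotents with $\sum_{\zeta\in\mu_n}E_\zeta=I_n$ and $E_\zeta^{\top}=E_{\bar\zeta}$. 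Hence $F_{\zeta,\bar\zeta}:=E_\zeta+E_{\bar\zeta}$ (for $\zeta\notin\mathbb R$) and $E_{\pm1}$ are circulant, Hermitian, and satisfy $F^{\top}=F$, so the blockings $\mathcal E_{\zeta,\bar\zeta}=\diag(F_{\zeta,\bar\zeta},F_{\zeta,\bar\zeta})$ and $\mathcal E_{\pm1}=\diag(E_{\pm1},E_{\pm1})$ genuinely belong to $\mathcal{SD}_n$. Self-adjointness, idempotency and mutual orthogonality are inherited blockwise from the $E_\zeta$; completeness $\sum\mathcal E=\diag(I_n,I_n)=I_{2n}$ follows from $\sum_\zeta E_\zeta=I_n$; and centrality holds because any product in $\mathcal{SD}_n$ only combines circulant matrices, which all commute (equivalently, $\mathcal{SD}_n$ is the commutant of the regular representation $\rho$ of $D_n$ by the proof of Proposition~\ref{pro:pdn}, and $\mathcal E_{\zeta,\bar\zeta}$ lies in the algebra generated by $\rho(D_n)$).

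Next I would compute the corners. By centrality $\mathcal E_{\zeta,\bar\zeta}\mathcal{SD}_n$ consists of the matrices $[\,aE_\zeta+a'E_{\bar\zeta},\ bE_\zeta+b'E_{\bar\zeta}\,;\ (bE_\zeta+b'E_{\bar\zeta})^{\top},\ (aE_\zeta+a'E_{\bar\zeta})^{\top}\,]$ with $(a,a',b,b')\in\CC^4$, so it is $4$-dimensional and non-commutative (for instance $[E_\zeta,0;0,E_{\bar\zeta}]$ and $[0,E_\zeta;E_{\bar\zeta},0]$ fail to commute). Being a $*$-subalgebra of $M_{2n}(\CC)$ with the conjugate-transpose, it is a finite-dimensional $C^*$-algebra, and the only such one of dimension $4$ that is non-commutative is $M_2(\CC)$ with conjugate-transpose; thus $\mathcal E_{\zeta,\bar\zeta}\mathcal{SD}_n\cong M_2(\CC)$ as $*$-algebras (one can also write the isomorphism explicitly via the action of an element on $\{(w,0),(0,w)\}$, $w$ spanning the range of $E_\zeta$, and check $*$ becomes conjugate-transpose directly). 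Likewise $\mathcal E_{\pm1}\mathcal{SD}_n=\{[aE_{\pm1},bE_{\pm1};bE_{\pm1},aE_{\pm1}]\}$ is a $2$-dimensional commutative $C^*$-algebra, hence $\cong\CC\times\CC$ with $*$ acting as complex conjugation on each factor. Counting the $n$-th roots of unity --- one real root and $(n-1)/2$ non-real conjugate pairs when $n$ is odd, two real roots and $n/2-1$ pairs when $n$ is even --- and assembling the corners gives exactly \eqref{eq:isom_dn}.

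For (b) the argument is word-for-word the same, replacing $\mathcal C_n$, $E_\zeta$, $\mu_n$ by $\mathcal N_n$, the nega-cyclotomic idempotents $N_\zeta$, and the set $\mu_{2n}\setminus\mu_n$ of $n$-th roots of $-1$, and using Lemma~\ref{lem:circ_decomp}(b); one checks $N_\zeta^{\top}=N_{\bar\zeta}$ and $\sum_{\zeta^n=-1}N_\zeta=I_n$ exactly as before, so that $\mathcal N_{\zeta,\bar\zeta}=\diag(N_\zeta+N_{\bar\zeta},N_\zeta+N_{\bar\zeta})$ and $\mathcal N_{-1}$ lie in $\mathcal{PD}_n$ and form a complete central self-adjoint orthogonal system, whose conjugate-pair corners are $\cong M_2(\CC)$ and whose real corner (when present) is $\cong\CC\times\CC$. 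The only change is the census of real $n$-th roots of $-1$: there is one ($-1$) when $n$ is odd, leaving $(n-1)/2$ pairs, and none when $n$ is even, leaving $n/2$ pairs; this produces \eqref{eq:isom_pdn}. As a consistency check, $\mathcal{SD}_n$ and $\mathcal{PD}_n$ are the commutants of the regular and of a genuinely projective regular representation of $D_n$, so by Wedderburn theory they are $*$-isomorphic to $\CC[D_n]$, respectively to the twisted group algebra of $D_n$ for the cocycle class of $\rho'$ in $H^2(D_n;\CC^\times)$, whose decompositions into simple factors are precisely the stated right-hand sides.

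There is no real conceptual obstacle here --- the work is bookkeeping. The points requiring care are: confirming that each conjugate-pair corner is the \emph{full} $M_2(\CC)$ (dimension four together with non-commutativity), checking that the ambient conjugate-transpose restricts to the expected involution on every factor, and keeping straight which $n$-th roots of $1$ (respectively of $-1$) are fixed by complex conjugation in each parity of $n$ --- it is exactly this last point that distinguishes the four right-hand sides in \eqref{eq:isom_dn}--\eqref{eq:isom_pdn}.
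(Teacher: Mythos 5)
Your proposal is correct and follows essentially the same route as the paper: decompose $\mathcal{SD}_n$ (resp.\ $\mathcal{PD}_n$) along the complete system of central self-adjoint orthogonal idempotents $\{\mathcal E_{\zeta,\bar\zeta}\}\cup\{\mathcal E_{\pm1}\}$ (resp.\ $\{\mathcal N_{\zeta,\bar\zeta}\}\cup\{\mathcal N_{-1}\}$) and identify each corner, with the four cases coming from the census of real roots. The only divergence is cosmetic: where you invoke the classification of four-dimensional non-commutative finite-dimensional $C^*$-algebras to get $M_2(\CC)$, the paper writes the isomorphism explicitly by projecting $\mathcal E_{\zeta,\bar\zeta}X$ onto its $E_\zeta$-component to read off the matrix $[a,c;d,b]$ and counts dimensions --- a route you also note parenthetically.
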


\begin{proof}
We begin with (a). Note that for any circulant matrix $A\in \mathcal C_n$, $AE_\zeta=aE_\zeta$ for some scalar $a=a_\zeta(A)$, where $a$ is the $\zeta$ component of the corresponding vector in $\CC^n$ given by the isomorphism in part (a) of Lemma \ref{lem:circ_decomp}. Let $X=[A,B;B^\top,A^\top]$ be a general element in $\mathcal{SD}_n$. Then  \be\label{eq:artwed_dn} \mathcal E_{\zeta,\bar\zeta}X=\begin{bmatrix} aE_\zeta+bE_{\bar \zeta} & cE_\zeta+dE_{\bar \zeta}\\  dE_\zeta+cE_{\bar \zeta} & bE_\zeta+aE_{\bar \zeta}\end{bmatrix},\ee for some complex numbers $a,b,c,d$. On the other hand, for every choice of complex $a,b,c,d$, the matrix on the right remains the same after multiplication with 
the idempotent. Therefore the variables $a,b,c,d$ are independent, and 
the ideal $\mathcal E_{\zeta,\bar\zeta}\mathcal{SD}_n$ is 4-dimensional algebra over $\CC$. There is an algebra homomorphism $\mathcal E_{\zeta,\bar\zeta}\mathcal{SD}_n\to M_2(\CC)$, obtained by projecting the matrix in \eqref{eq:artwed_dn} to the component of $E_\zeta$, which is $[a,c;d,b]$. By dimension counting this is an isomorphism. Note also that the $*$-involution on $\mathcal E_{\zeta,\bar\zeta}\mathcal{SD}_n$ conjugates $a,b,c,d$ and interchanges between $c,d$. Thus it corresponds to the conjugate-transpose in $M_2(\CC)$. Now, \be\label{eq:compE1} \mathcal E_1X=\begin{bmatrix}
    aE_1 & bE_1\\ bE_1 & aE_1
\end{bmatrix},\ee and sending this matrix to the vector $(a+b,a-b)\in \CC^2$ becomes an algebra isomorphism $\mathcal E_1\mathcal{SD}_n\to \CC^2$. If $n$ is even, the same thing verbatim works with $E_{-1}$. Notice that also here the $*$-involution becomes complex conjugation. The self-adjoint idempotents $\mathcal E_{\zeta,\bar\zeta}$, $\mathcal E_1$ and $\mathcal E_{-1}$ (if $n$ is even) sum up to the identity $I_{2n}$, hence there is a $*$-algebra isomorphism $$\mathcal{SD}_n\ \cong \bigoplus E_{\zeta,\bar\zeta}\mathcal{SD}_n \oplus \mathcal E_1\mathcal{SD}_n \ (\oplus \mathcal E_{-1}\mathcal{SD}_n), $$ which proves the isomorphism in part (a).\\

Part (b) is proved similarly. The only difference is that when $n$ is odd we have the idempotent $\mathcal N_{-1}$ which is responsible for the $\CC^2$, and is missing when $n$ is even, hence the only components are $M_2(\CC)$. Although the dimensions are the same, there is no longer an isomorphism between $\mathcal{SD}_n$ and $\mathcal{PD}_n$ when $n$ is even.
\end{proof}

\subsection{Tight dihedral frames of redundancy $2$ in $\CC^n$}\label{sec:tightd}

In this section, we  classify the dihedral $TF(2n,n)$. Up to switching equivalence, their scaled Gram matrix $\tfrac{1}{2}\Phi^*\Phi$ is a rank $n$ idempotent, and can be brought to the form as in equation \eqref{eq:circpart}. By Theorem \ref{thm:circpart}, all we have to do is to classify the rank $n$ self-adjoint idempotents of the type $[A,B;B^\top,A^\top]$ with $A,B$ both (nega-)circulant. 

The goal now is to use the isomorphisms \eqref{eq:isom_dn}-\eqref{eq:isom_pdn} to construct all the self-adjoint idempotents of those algebras. Any self-adjoint idempotent of rank $n$ in $\CC^r\times M_2(\CC)^s$ can be expressed in a unique way the sum of self-adjoint idempotents in the components. For a field component $\CC$, the only options are $0$ or $1$. For a matrix component $M_2(\CC)$, we have three options. Besides $0$ and $I_2$, there are idempotents of rank $1$. These are orthogonal projectors onto a 1-dimensional subspace. More explicitly, for every unit vector $u\in \CC^2$, the  idempotent $e_{u}=u^*u$ projects onto $\text{span}\{u\}$.  The self-adjoint idempotents of rank $n$ are obtained by collecting idempotents from this list, with total rank $n$. In what follows, we use $A\otimes B$ to denote the Kronecker product of matrices. This is the matrix with block structure $(A_{i,j}\cdot B)_{i,j}$. 

\begin{thm}[Strict Case]\label{thm:str_g} Let $\mu_n=H\sqcup F \sqcup C$ be a partition of the set $\mu_n$ of all 
$n^{th}$ roots of unity
such that $|F|=|C|$ and such that all sets $H,F,C$ are closed under complex conjugation. For each root $\zeta\in H$, if $\zeta^2\neq 1$ pick a unit vector $(v_\zeta,u_\zeta)\in \CC^2$ such that $v_{\bar\zeta}=\overline{u_\zeta}$ and $u_{\bar\zeta}=\overline{v_\zeta}$. If $\zeta^2=1$, pick $u_\zeta=1/\sqrt{2}$ and $v_\zeta=\pm 1/\sqrt{2}$. Then the $2n\times 2n$ matrix 
  \begin{equation}\label{equ:gramS} 
  X=\sum_{\zeta\in H} \begin{bmatrix}
        u_\zeta\overline{u_\zeta} & u_\zeta \overline{v_\zeta}\\ 
        v_\zeta \overline{u_\zeta} & v_\zeta\overline{v_\zeta}
    \end{bmatrix}\otimes E_\zeta +\sum_{\zeta\in F} I_2\otimes E_\zeta
   \end{equation} is a self-adjoint idempotent\footnote{This is the same as an orthogonal projection.} of $\mathcal{SD}_n$ of rank $n$. Furthermore, all self-adjoint idempotents of rank $n$ in $\mathcal{SD}_n$ are of this form.
    
\end{thm}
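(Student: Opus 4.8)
The plan is to exploit the explicit Artin--Wedderburn decomposition $\mathcal{SD}_n\cong \CC^{a}\times M_2(\CC)^{b}$ from equation \eqref{eq:isom_dn} together with the componentwise behavior of the $*$-involution, which is conjugate-transpose on each block. A self-adjoint idempotent of $\mathcal{SD}_n$ corresponds under this isomorphism to a tuple of self-adjoint idempotents, one in each component. In a $\CC$-component the only self-adjoint idempotents are $0$ and $1$; in an $M_2(\CC)$-component they are $0$, $I_2$, and the rank-one orthogonal projectors $e_u=u u^*$ for unit $u\in\CC^2$ (note the paper writes $e_u=u^*u$ but means the rank-one projector $uu^*$ onto $\mathrm{span}\{u\}$). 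So the classification reduces to bookkeeping: enumerating which components are sent to which idempotent, and then translating back through the explicit isomorphism to a statement about circulant blocks and cyclotomic idempotents $E_\zeta$.

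The key steps, in order, are as follows. First, recall from Lemma \ref{lem:circ_decomp}(a) that every circulant $A$ satisfies $AE_\zeta=a_\zeta(A)E_\zeta$, and from the proof of the preceding proposition that the $M_2(\CC)$-component of $\mathcal{SD}_n$ attached to a conjugate pair $\{\zeta,\bar\zeta\}$ ($\zeta^2\neq 1$) is realized by $\mathcal E_{\zeta,\bar\zeta}X=\bigl[aE_\zeta+bE_{\bar\zeta},\,cE_\zeta+dE_{\bar\zeta};\,dE_\zeta+cE_{\bar\zeta},\,bE_\zeta+aE_{\bar\zeta}\bigr]$, with the block $X\mapsto [a,c;d,b]$ being the isomorphism onto $M_2(\CC)$; and the $\CC$-components attached to $\zeta=\pm1$ are realized by $\mathcal E_{\pm1}X=\bigl[aE_{\pm1},bE_{\pm1};bE_{\pm1},aE_{\pm1}\bigr]\mapsto(a+b,a-b)$. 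Second, build a self-adjoint idempotent by choosing, for each conjugate pair $\{\zeta,\bar\zeta\}$ with $\zeta^2\neq 1$, one of $0,I_2,e_{(u_\zeta,v_\zeta)}$; and for each $\zeta\in\{\pm1\}$ (those present), either $0$ or $1$. Third, organize the choices into the three sets: $F$ = those $\zeta$ (a conjugation-closed set) assigned $I_2$ (or $1$ in the field case), $C$ = those assigned $0$, $H$ = those assigned a rank-one projector $e_{(u_\zeta,v_\zeta)}$ (in the field case $\zeta=\pm1$, the rank-one projector onto a line in $\CC^2$ restricted to the one-dimensional value $a\mp(-b)$... more precisely the value $1$ comes from $a=b=1/2$, i.e. $(u_\zeta,v_\zeta)=(1/\sqrt2,1/\sqrt2)$ or $(1/\sqrt2,-1/\sqrt2)$). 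Fourth, impose the Hermiticity constraint $X=X^*$: since $*$ is conjugate-transpose on each $M_2(\CC)$ component and complex conjugation relates the $\zeta$ and $\bar\zeta$ data, one needs the vector chosen at $\bar\zeta$ to be the complex conjugate of the vector chosen at $\zeta$ in the appropriate sense, which is exactly the stated conditions $v_{\bar\zeta}=\overline{u_\zeta}$, $u_{\bar\zeta}=\overline{v_\zeta}$. Fifth, compute the rank: each $\zeta\in F$ contributes $2$ copies (the two-dimensional $E_\zeta$-isotypic piece, since $E_\zeta$ has rank $1$ as a circulant and appears with multiplicity $2$ in $\mathcal{SD}_n$) — wait, more carefully: $E_\zeta$ has rank $1$, and $I_2\otimes E_\zeta$ has rank $2$, while $\bigl[\begin{smallmatrix}u\bar u & u\bar v\\ v\bar u & v\bar v\end{smallmatrix}\bigr]\otimes E_\zeta$ has rank $1$; so $\operatorname{rank}X=2|F|+|H|$, and since $|H|+|F|+|C|=n$ and $|F|=|C|$ this equals $|H|+2|F|=n-|C|+|F|=n-|F|+|F|$... let me redo: $|H|=n-|F|-|C|=n-2|F|$, so $\operatorname{rank}X=(n-2|F|)+2|F|=n$, confirming rank $n$. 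Sixth, conversely, given any rank-$n$ self-adjoint idempotent, decompose it through the isomorphism, read off the three sets from the componentwise idempotents, verify $|F|=|C|$ from the rank-$n$ condition, and recover the vectors; this shows every such idempotent has the claimed form.

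The main obstacle is bookkeeping rather than depth: one must carefully track how the two separate indexings --- the "block" coordinates $(a,b,c,d)$ used in the isomorphism proof versus the "vector" coordinates $(u_\zeta,v_\zeta)$ in the theorem statement --- match up, and in particular pin down the exact complex-conjugation relation between the data at $\zeta$ and at $\bar\zeta$ forced by self-adjointness, since the formula \eqref{equ:gramS} sums only over $\zeta\in H$ yet $H$ is conjugation-closed, so each conjugate pair is counted "twice" and the cross-terms $u_\zeta\overline{v_\zeta}E_\zeta$ at $\zeta$ and $u_{\bar\zeta}\overline{v_{\bar\zeta}}E_{\bar\zeta}$ at $\bar\zeta$ must assemble into a genuine Hermitian circulant off-diagonal block $B$; checking this is the one computation requiring care. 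I would also double-check the degenerate cases $\zeta=\pm1$ where the $M_2(\CC)$ component collapses to $\CC^2$ and the allowed "vectors" are restricted to $(u_\zeta,v_\zeta)=(1/\sqrt2,\pm1/\sqrt2)$, matching the stated choice $u_\zeta=1/\sqrt2$, $v_\zeta=\pm1/\sqrt2$.
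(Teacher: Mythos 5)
Your proposal is correct and follows essentially the same route as the paper: decompose $\mathcal{SD}_n$ via the explicit Artin--Wedderburn isomorphism \eqref{eq:isom_dn}, use that a self-adjoint idempotent splits uniquely into componentwise self-adjoint idempotents ($0$ or $1$ in each $\CC$-factor; $0$, $I_2$, or a rank-one projector $ww^*$ in each $M_2(\CC)$-factor), read off the partition $H\sqcup F\sqcup C$ from these choices, and get $|F|=|C|$ by comparing $\operatorname{rank}X=|H|+2|F|=n$ with $|H|+|F|+|C|=n$. The only quibble is attribution of the $\zeta\leftrightarrow\bar\zeta$ compatibility conditions to Hermiticity alone — they really come from the fact that a single $M_2(\CC)$-component of the algebra simultaneously governs the $E_\zeta$ and $E_{\bar\zeta}$ coefficients (cf.\ \eqref{eq:artwed_dn}) — but since you correctly work inside that component, this does not affect the argument.
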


\begin{proof}
For $\zeta^2\neq 1$, every rank $1$ self-adjoint idempotent subordinate to $\mathcal E_{\zeta,\bar\zeta}$ is of the form $R_{u,v,\zeta}:=[u\bar u, u\bar v;v\bar u,v\bar v]\otimes E_\zeta+[\bar u u, \bar u v;u\bar v,\bar v v]\otimes E_{\bar\zeta}$ for a unit vector $(u,v)\in \CC^2$ (and this vector is uniquely determined up to a unimodular scalar, cf. \eqref{eq:artwed_dn} and the discussion afterwards). The rank $2$ idempotent is $S_\zeta:=I_2\otimes (E_\zeta+E_{\bar\zeta})$. Similarly, for $\zeta^2=1$, there are two rank $1$ idempotents, $R_{1,\zeta}:=\tfrac{1}{2}[1,1;1,1]\otimes E_\zeta$ and $R_{2,\zeta}=\tfrac{1}{2}[1,-1;-1,1]\otimes E_\zeta$ (cf. \eqref{eq:compE1}). The rank $2$ idempotent is $S_\zeta:=I_2\otimes E_\zeta=R_{1,\zeta}+R_{2,\zeta}$. Every rank $n$ self-adjoint idempotent is in a unique way a sum of distinct idempotents from this list, one per each component.

Let $H\subseteq\mu_n$ be a union of those of $\{\zeta,\bar\zeta\}$, from which we take idempotents of rank $1$. Let $F\subseteq \mu_n$ be a union of those of $\{\zeta,\bar\zeta\}$, from which we take idempotents of rank $2$. Let $C\subseteq\mu_n$ be the complement of $H\cup F$. The rank of the total idempotent is $|H|+2|F|=n$, while total cardinality $|H|+|F|+|C|=|\mu_n|=n$. We conclude that $|F|=|C|$. For each $\{\zeta,\bar\zeta\}\subset H$ we choose a unit vector $(u_\zeta,v_\zeta)\in \CC^2$ and the idempotent $R_{u_\zeta,v_\zeta,\zeta}$. Likewise, if $\zeta^2=1$ and $\zeta\in H$, we choose the idempotent $R_{\ell,\zeta}$, $\ell=1$ or $2$. For $\zeta\in F$ we choose $S_\zeta$. The total idempotent is $$X=\sum_{\{\zeta,\bar\zeta\}\subseteq H,\zeta^2\neq 1} R_{u_\zeta,v_\zeta,\zeta} +\sum_{\{\zeta,\bar\zeta\}\subseteq H,\zeta^2= 1} R_{i_\zeta,\zeta}+\sum_{\{\zeta,\bar\zeta\}\subseteq F} S_{\zeta}.$$ This sum equals the one in \eqref{equ:gramS}, which concludes the proof that every self-adjoint rank $n$ idempotent in $\mathcal{SD}_n$ is of this form. The converse also holds, as it can be seen directly that $X$ in \eqref{equ:gramS} is such an idempotent. 
\end{proof}

Similarly, in the projective case, we have:

\begin{thm}[Projective Case]\label{thm:proj_g} Let $\mu_{2n}\setminus \mu_n=H\sqcup F \sqcup C$ be a partition such that $|F|=|C|$ and such that all sets $H,F,C$ are closed under complex conjugation. For each root $\zeta\in H$, if $\zeta\neq -1$ pick a unit vector $(v_\zeta,u_\zeta)\in \CC^2$ such that $v_{\bar\zeta}=\overline{u_\zeta}$ and $u_{\bar\zeta}=\overline{v_\zeta}$. If $\zeta=-1$, pick $u_\zeta=1/\sqrt{2}$ and $v_\zeta=\pm 1/\sqrt{2}$. Then the $2n\times 2n$ matrix 
  \begin{equation}\label{equ:gramP}
   X=\sum_{\zeta\in H} \begin{bmatrix}
        u_\zeta\overline{u_\zeta} & u_\zeta \overline{v_\zeta}\\ 
        v_\zeta \overline{u_\zeta} & v_\zeta\overline{v_\zeta}
    \end{bmatrix}\otimes N_\zeta +\sum_{\zeta\in F} I_2\otimes N_\zeta
  \end{equation}
   is a self-adjoint idempotent of $\mathcal {PD}_n$ of rank $n$. Furthermore, all self-adjoint idempotents of rank $n$ in $\mathcal {PD}_n$ are of this form.
\end{thm}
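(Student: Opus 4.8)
The plan is to run the proof of Theorem~\ref{thm:str_g} essentially verbatim, feeding in the Artin--Wedderburn data of $\mathcal{PD}_n$ in place of that of $\mathcal{SD}_n$. So I would start from part (b) of the preceding proposition, which realizes the $*$-algebra isomorphism \eqref{eq:isom_pdn} by the system of central, self-adjoint, mutually orthogonal idempotents $\{\mathcal N_{\zeta,\bar\zeta}\}\cup\{\mathcal N_{-1}\}$ summing to $I_{2n}$ --- the factor $\mathcal N_{-1}$ being present exactly when $n$ is odd, since $-1\in\mu_{2n}\setminus\mu_n$ if and only if $n$ is odd, and being then the unique self-conjugate element of $\mu_{2n}\setminus\mu_n$. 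Each corner $\mathcal N_{\zeta,\bar\zeta}\mathcal{PD}_n\cong M_2(\CC)$ carries $*$ as conjugate-transpose, the isomorphism being ``read off the $N_\zeta$-coefficient block'' as in \eqref{eq:artwed_dn} with $E$ replaced by $N$; and $\mathcal N_{-1}\mathcal{PD}_n\cong\CC^2$ carries $*$ as complex conjugation, as in \eqref{eq:compE1}.

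The steps are then: (1) record the elementary classification of rank-$n$ self-adjoint idempotents in a product $\CC^r\times M_2(\CC)^s$ --- such an idempotent is the tuple of its coordinate idempotents, each being $0$ or $1$ in a $\CC$-factor, and $0$, $I_2$, or a rank-one orthogonal projector onto a line $\CC w\subseteq\CC^2$ ($\|w\|=1$, unique up to a unimodular scalar) in an $M_2(\CC)$-factor; ranks add, so one simply collects coordinate idempotents of total rank $n$. (2) Transport this list back through \eqref{eq:isom_pdn}: for a conjugate pair $\{\zeta,\bar\zeta\}$ with $\zeta\neq-1$, a rank-one coordinate idempotent becomes a Kronecker term $[u_\zeta\overline{u_\zeta},u_\zeta\overline{v_\zeta};v_\zeta\overline{u_\zeta},v_\zeta\overline{v_\zeta}]\otimes N_\zeta$ together with its conjugate-pair partner $[\cdots]\otimes N_{\bar\zeta}$, the two tied together by $v_{\bar\zeta}=\overline{u_\zeta}$ and $u_{\bar\zeta}=\overline{v_\zeta}$ (exactly the relations forcing the sum to have the negacirculant pattern $[A,B;B^\top,A^\top]$, i.e.\ to lie in $\mathcal{PD}_n$), while a rank-two coordinate idempotent becomes $I_2\otimes(N_\zeta+N_{\bar\zeta})$; and, when $n$ is odd, the exceptional factor $\mathcal N_{-1}\mathcal{PD}_n\cong\CC^2$ contributes the rank-one idempotents $\tfrac12[1,1;1,1]\otimes N_{-1}$ and $\tfrac12[1,-1;-1,1]\otimes N_{-1}$ (the choices $u_{-1}=1/\sqrt2$, $v_{-1}=\pm1/\sqrt2$), or the rank-two idempotent $I_2\otimes N_{-1}$. (3) Do the bookkeeping: let $H,F,C$ collect the roots at which we picked, respectively, a rank-one, a rank-two, or the zero coordinate idempotent; all three are conjugation-closed since conjugation permutes the factors fixing each $\{\zeta,\bar\zeta\}$ setwise and fixing $-1$. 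Since $N_\zeta$ has rank $1$ and $I_2\otimes N_\zeta$ has rank $2$, the total rank is $|H|+2|F|=n$, while $|H|+|F|+|C|=|\mu_{2n}\setminus\mu_n|=n$; subtracting gives $|F|=|C|$, and summing reproduces exactly \eqref{equ:gramP}. The converse is immediate: for any conjugation-closed partition with $|F|=|C|$ and admissible $(v_\zeta,u_\zeta)$, the matrix \eqref{equ:gramP} lies in $\mathcal{PD}_n$ (same linkage relations), is self-adjoint, and is idempotent of rank $|H|+2|F|=n$ by orthogonality of the $N_\zeta$ and the fact that each $2\times2$ summand is a projector.

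There is no substantive obstacle here: all the content sits in the already-proved decomposition \eqref{eq:isom_pdn} and in Lemma~\ref{lem:circ_decomp}(b). The two points that genuinely differ from the strict case, and hence deserve care, are: (i) the index set is $\mu_{2n}\setminus\mu_n$, the $n$-th roots of $-1$, whose only real element is $-1$, present precisely when $n$ is odd --- so that for $n$ even there is \emph{no} exceptional $\CC$-type factor, in contrast with \eqref{eq:isom_dn}; and (ii) checking that the linkage relations $v_{\bar\zeta}=\overline{u_\zeta}$, $u_{\bar\zeta}=\overline{v_\zeta}$ (together with $u_{-1}=1/\sqrt2$, $v_{-1}=\pm1/\sqrt2$ at the real root) are exactly what the negacirculant analogue of \eqref{eq:artwed_dn}--\eqref{eq:compE1} produces under Lemma~\ref{lem:circ_decomp}(b). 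Beyond that, the argument is the one already given for Theorem~\ref{thm:str_g}, with $E_\zeta\leadsto N_\zeta$ and $\mu_n\leadsto\mu_{2n}\setminus\mu_n$.
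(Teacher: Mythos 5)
Your proposal is correct and is precisely the argument the paper intends: the paper's own proof of this theorem is literally the single line ``Similar to the proof of Theorem~\ref{thm:str_g},'' and you have carried out that transfer faithfully, correctly isolating the only genuine differences (the index set $\mu_{2n}\setminus\mu_n$, whose unique real element $-1$ occurs exactly when $n$ is odd, and the linkage relations tying $\zeta$ to $\bar\zeta$). Nothing further is needed.
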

\begin{proof}
    Similar to the proof of Theorem \ref{thm:str_g}.
\end{proof}

\begin{remark}
These two theorems claim that $SD_nv$ (resp. $PD_nv$)$\in \tf{2n}{n}$ if and only if, up to switching equivalence, its half
 Gram matrix is of the form \eqref{equ:gramS} (resp. \eqref{equ:gramP}).
\end{remark}

\begin{remark}
    The results here should be compared to \cite[Thm 10.8]{Waldronbook}, which gives a general criteria for a $G$-frame to be tight. The description there seems very different at first sight. We may view the TF $\Phi$ as an orthogonal projection from the regular (projective) representation onto a subrepresentation. The description there is in terms of the embedding matrix $\Phi$ of the subrepresentation into the regular representation, written in a basis subordinate to the irreducible representational decomposition of the former. Our description is of the projection operator in terms of the (nega)circulant basis. In our case, each constituent appears in the regular representation with multiplicity 2, and we are choosing whether to leave it as is ($F$), reduce the multiplicity to $1$ ($H$) or to $0$ ($C$).
\end{remark}

\subsection{Regular dihedral configurations and ETFs}\label{sec:regular}
This section builds upon previous results of dihedral configurations, and further characterizes tight or equiangular tight dihedral frames assuming it is regular.
\begin{dfn}
    A $n\times 2n$ dihedral frame $\Phi=D_nv$ is \emph{regular}, if the subset $\{v,\mu v,\ldots, \mu^{n-1}v\}$ is a basis for $\CC^n$, where $D_n=\{1,\mu,\ldots,\mu^{n-1},\tau,\mu\tau,\ldots,\mu^{n-1}\tau\}$ is the dihedral group.
\end{dfn}

This condition is equivalent to the requirement that in the Gram matrix $[A,B;B^\top,A^\top]$, the matrix $A$ is invertible. Applying $\tau$ to the basis $\{v,\mu v,\ldots, \mu^{n-1}v\}$, we see that the reordered set $\{\tau v,\mu\tau v,\ldots,\mu^{n-1}\tau v\}$ is also a basis, showing that $B$ is invertible too. Conversely, if $B$ is invertible, then so must be $A$, so $B$ being invertible is also equivalent to regularity. Even though regularity looks like a generic condition, it is not a necessary condition for a dihedral configuration to be a frame. We can even have tight frames that are not regular.

\begin{ex}
Let $\Phi$ be defined as in \eqref{equ:Dnv}, which is a strict dihedral configuration. Using \eqref{equ:dft}, we have for $v=\Fc x$
$$\rank([v, Mv, \cdots, M^{n-1}v])= \rank([x, \Tc x, \cdots, \Tc^{n-1}x]) = \rank(\diag(\Fc x))=|\text{supp}(v)|.$$
So given $v=[1,\ldots,1,0]$, $\Phi$ will not be regular as the first $n$ vectors only span a subspace of dimention $n-1$, but one can check that $\Phi$ spans $\C^n$, and thus is a frame.
\end{ex}

\begin{ex}
    We can use Theorem \ref{thm:proj_g} to generate tight dihedral frames that are not regular. Suppose that $n$ is even and let $F\subset \mu_n$ be a set of cardinality $n/2$. Then $H$ is empty and the idempotent $X=I_2\otimes \sum_{\zeta\in F}  N_\zeta$ gives rise to a $n\times 2n$ strict dihedral tight frame. However, $B=0$ in that matrix and $\rank (A)=n/2$, so the frame is not regular. 
\end{ex}

The next proposition characterizes regular tight dihedral frames.

\begin{proposition}
    Let $H,F,C$ be as in Theorems \ref{thm:str_g} or \ref{thm:proj_g}. Then the idempotent $X$ in \eqref{equ:gramS} or \eqref{equ:gramP} is regular if and only if $|H|=n$. 
\end{proposition}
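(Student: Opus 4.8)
The plan is to analyze when the block $A$ (equivalently $B$) in the Gram matrix $X = [A,B;B^\top,A^\top]$ is invertible, using the explicit spectral description of $X$ provided by Theorem \ref{thm:str_g} (strict case) or Theorem \ref{thm:proj_g} (projective case). Recall from the proof of Theorem \ref{thm:circpart} and the regularity discussion that regularity is equivalent to $A$ being invertible. Under the isomorphism of Lemma \ref{lem:circ_decomp}, the circulant matrix $A$ is diagonalized by the cyclotomic idempotents, so $A$ is invertible precisely when its $\zeta$-component $a_\zeta(A)$ is nonzero for every $\zeta\in\mu_n$ (resp. every $\zeta\in\mu_{2n}\setminus\mu_n$ in the projective case). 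So the task reduces to reading off $a_\zeta(A)$ from \eqref{equ:gramS} or \eqref{equ:gramP} for each $\zeta$ in each of the three parts $H$, $F$, $C$ of the partition.

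First I would compute these components part by part. For $\zeta\in C$, neither $E_\zeta$ (resp. $N_\zeta$) term appears in $X$, so the $(1,1)$-block of $X$ has zero $\zeta$-component, i.e. $a_\zeta(A)=0$; hence if $C\neq\emptyset$ the frame is not regular. For $\zeta\in F$, the contribution to the $(1,1)$-block is $1\cdot E_\zeta$ (the top-left entry of $I_2$), so $a_\zeta(A)=1\neq 0$. For $\zeta\in H$ with $\zeta^2\neq 1$ (resp. $\zeta\neq -1$), the contribution to the $(1,1)$-block is $u_\zeta\overline{u_\zeta}\,E_\zeta = |u_\zeta|^2 E_\zeta$, so $a_\zeta(A)=|u_\zeta|^2$; and for the special real roots $\zeta^2=1$ (resp. $\zeta=-1$) we chose $u_\zeta=1/\sqrt2$, giving $a_\zeta(A)=1/2$. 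The subtle point here is that $|u_\zeta|^2$ could be $0$: since $(u_\zeta,v_\zeta)$ is merely required to be a \emph{unit} vector in $\CC^2$, one could have $u_\zeta=0$ and $v_\zeta$ unimodular, in which case $a_\zeta(A)=0$ even though $\zeta\in H$. I expect this to be the main obstacle, and it must be handled by invoking switching equivalence: the proposition claims regularity up to the switching equivalence built into Theorems \ref{thm:str_g}–\ref{thm:proj_g}, and one shows that whenever some $|u_\zeta|^2=0$ one can replace $(u_\zeta,v_\zeta)$ by a switching-equivalent choice making it nonzero — concretely, the pair $(u_\zeta,v_\zeta)$ and $(u_{\bar\zeta},v_{\bar\zeta})=(\overline{v_\zeta},\overline{u_\zeta})$ together describe a $2$-dimensional subspace, and a generic unitary change of coordinates on $\CC^n$ commuting with the circulant structure (a diagonal rotation on the $E_\zeta,E_{\bar\zeta}$ eigenspaces) moves $u_\zeta$ off zero without changing the equivalence class of the frame; alternatively one notes that $A$ invertible is itself a switching-invariant condition and that among all Gram matrices of the form \eqref{equ:gramS} representing the same frame one may choose the vectors so that no coordinate vanishes.

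Putting the cases together: $A$ is invertible iff $a_\zeta(A)\neq 0$ for all $\zeta\in\mu_n$ (resp. $\mu_{2n}\setminus\mu_n$), which by the above forces $C=\emptyset$, and then from $|F|=|C|$ (part of the hypothesis of Theorems \ref{thm:str_g}–\ref{thm:proj_g}) we get $F=\emptyset$ as well, hence $H=\mu_n$ (resp. $H=\mu_{2n}\setminus\mu_n$), i.e. $|H|=n$. Conversely, if $|H|=n$ then $F=C=\emptyset$, every $\zeta$ lies in $H$, and after the switching adjustment above every $a_\zeta(A)=|u_\zeta|^2>0$ (or $=1/2$ for the real roots), so $A$ is invertible and the frame is regular. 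I would write this up by first recording the component formula $a_\zeta(A)$ in the three cases, then deriving the cardinality conclusion, and finally addressing the degenerate-$u_\zeta$ issue via switching equivalence; the whole argument is short once the bookkeeping of components is in place.
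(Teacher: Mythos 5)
Your core argument is the same as the paper's, just phrased through the block $A$ instead of $B$: the paper's entire proof is the observation that $\rank(B)=|H|$ (since $B=\sum_{\zeta\in H}u_\zeta\overline{v_\zeta}E_\zeta$, resp.\ $N_\zeta$) together with the fact, recorded at the start of \S\ref{sec:regular}, that regularity is equivalent to $\rank(B)=n$, equivalently to $\rank(A)=n$. Your component-by-component computation of $a_\zeta(A)$ --- namely $0$ on $C$, $1$ on $F$, and $|u_\zeta|^2$ on $H$ --- is correct, and running over a conjugate pair $\{\zeta,\bar\zeta\}\subseteq H$ it detects exactly the same vanishing as the coefficient $u_\zeta\overline{v_\zeta}$ of $B$, so the two bookkeepings are interchangeable.

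The degenerate case you isolate ($u_\zeta=0$, $|v_\zeta|=1$ for some $\zeta\in H$) is a genuine issue --- the paper's one-line assertion that $\rank(B)=|H|$ silently assumes $u_\zeta\overline{v_\zeta}\neq 0$ --- but your proposed repair does not work. Replacing $(u_\zeta,v_\zeta)$ by a rotated unit vector changes the idempotent $X$ itself, hence the Gram and hence the frame; a ``diagonal rotation on the $E_\zeta,E_{\bar\zeta}$ eigenspaces'' is a unitary conjugation but not a conjugation by a monomial matrix, so it is not a switching equivalence, and in any case regularity is a property of the presented configuration (i.e.\ of $\rank A$ in the given block form), not merely of its switching class; likewise ``$A$ invertible'' is not switching-invariant, since a general monomial can mix the two blocks. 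Concretely, take $n=3$, $H=\mu_3$, $u_1=v_1=1/\sqrt2$, and $(u_\omega,v_\omega)=(0,1)$: this is a legitimate rank-$3$ self-adjoint idempotent in $\mathcal{SD}_3$ with $|H|=n$, yet $A=\tfrac12E_1+E_{\bar\omega}$ and $B=\tfrac12E_1$ are singular, so the ``if'' direction fails for it. The honest fix is not switching but an added hypothesis: the correct statement is ``$X$ is regular if and only if $|H|=n$ and $u_\zeta v_\zeta\neq0$ for all $\zeta\in H$.'' The ``only if'' direction --- which is what the paper actually uses downstream --- is unaffected, since $\rank(B)\le|H|$ always holds. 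In short: same approach as the paper, correct where the paper is correct, but drop the switching-equivalence patch in favor of the explicit nondegeneracy condition.
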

\begin{proof}
    Writing $X=[A,B;B^\top,A^\top]$, the rank of $B$ equals the cardinality of $H$, and regularity is equivalent to $\rank(B)=n$.
\end{proof}

\begin{proposition}\label{prop:real}
    If $X=[A,B;B^\top,A^\top]\in \mathcal{SD}_n$ or $\mathcal{PD}_n$ is a self-adjoint idempotent of rank $n$, then in the strict case $$Re(A)=\tfrac{1}{2}\sum_{\zeta\in H} E_\zeta+\sum_{\zeta\in F} E_\zeta,$$ and in the projective case
    $$Re(A)=\tfrac{1}{2}\sum_{\zeta\in H} N_\zeta+\sum_{\zeta\in F} N_\zeta.$$
\end{proposition}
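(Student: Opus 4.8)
The plan is to read off the block $A$ directly from the explicit normal form in Theorem~\ref{thm:str_g} (resp. Theorem~\ref{thm:proj_g}) and then take entrywise real parts, using the conjugation symmetry built into the data $H,F$ and the vectors $(u_\zeta,v_\zeta)$.

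First, since $X=[A,B;B^\top,A^\top]$ is a rank-$n$ self-adjoint idempotent of $\mathcal{SD}_n$ (resp. $\mathcal{PD}_n$), Theorem~\ref{thm:str_g} (resp. \ref{thm:proj_g}) produces a conjugation-closed partition $\mu_n=H\sqcup F\sqcup C$ (resp. $\mu_{2n}\setminus\mu_n = H\sqcup F\sqcup C$) and unit vectors $(u_\zeta,v_\zeta)$ such that $X$ equals the expression \eqref{equ:gramS} (resp. \eqref{equ:gramP}). Expanding the Kronecker products there into $2\times2$ block form and comparing top-left blocks, I get
$$A=\sum_{\zeta\in H}|u_\zeta|^2 E_\zeta+\sum_{\zeta\in F}E_\zeta$$
in the strict case, and the same formula with $N_\zeta$ in place of $E_\zeta$ in the projective case.

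Next I would take $Re(A)=\tfrac12(A+\overline A)$ (entrywise real part; note $A$ is Hermitian, so this is also $\tfrac12(A+A^\top)$). The two facts needed are $\overline{E_\zeta}=E_{\bar\zeta}$, which is immediate from $(E_\zeta)_{i,j}=\tfrac1n\zeta^{j-i}$, and likewise $\overline{N_\zeta}=N_{\bar\zeta}$ since the sign pattern $\chi(i,j)$ is real. Conjugating the displayed formula for $A$ and reindexing $\zeta\mapsto\bar\zeta$ — which is legitimate because $H$ and $F$ are closed under conjugation — gives $\overline A=\sum_{\zeta\in H}|u_{\bar\zeta}|^2 E_\zeta+\sum_{\zeta\in F}E_\zeta$. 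The relation $u_{\bar\zeta}=\overline{v_\zeta}$ from Theorem~\ref{thm:str_g} (resp. \ref{thm:proj_g}) yields $|u_{\bar\zeta}|^2=|v_\zeta|^2$, and this identity also holds at the exceptional roots ($\zeta^2=1$ in the strict case, $\zeta=-1$ in the projective case), where the normalization forces $|u_\zeta|^2=|v_\zeta|^2=\tfrac12$. Averaging, and using that $(u_\zeta,v_\zeta)$ is a unit vector,
$$Re(A)=\tfrac12\sum_{\zeta\in H}\bigl(|u_\zeta|^2+|v_\zeta|^2\bigr)E_\zeta+\sum_{\zeta\in F}E_\zeta=\tfrac12\sum_{\zeta\in H}E_\zeta+\sum_{\zeta\in F}E_\zeta,$$
and identically in the projective case with $N_\zeta$.

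I do not expect a genuine obstacle here: the statement is essentially an immediate corollary of Theorems~\ref{thm:str_g}--\ref{thm:proj_g}. The only things to watch are the bookkeeping in the reindexing step and the consistency check at the exceptional roots $\zeta^2=1$ (resp. $\zeta=-1$), both of which are handled by the explicit choices $u_\zeta=1/\sqrt2$, $v_\zeta=\pm1/\sqrt2$ made in those theorems.
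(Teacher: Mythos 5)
Your proof is correct and follows essentially the same route as the paper's: read off $A=\sum_{\zeta\in H}|u_\zeta|^2 E_\zeta+\sum_{\zeta\in F}E_\zeta$ (resp.\ with $N_\zeta$) from Theorem~\ref{thm:str_g} or \ref{thm:proj_g}, conjugate, reindex $\zeta\mapsto\bar\zeta$ over the conjugation-closed sets $H,F$, and use $u_{\bar\zeta}=\overline{v_\zeta}$ together with $|u_\zeta|^2+|v_\zeta|^2=1$. The paper carries this out entrywise on $A_{i,j}+\overline{A_{i,j}}$ while you work at the level of the matrices $E_\zeta$, $N_\zeta$ via $\overline{E_\zeta}=E_{\bar\zeta}$ and $\overline{N_\zeta}=N_{\bar\zeta}$, but the argument is the same.
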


\begin{proof} Let us prove this in the projective case. The strict case is similar.
    We have $A=\sum_{\zeta\in H} u_\zeta\overline{u_\zeta} N_\zeta+\sum_{\zeta\in F} N_\zeta$,  $\zeta\in \mu_{2n}\setminus \mu_n$. Hence $A_{i,j}=\pm\tfrac{1}{n}\sum_{\zeta\in H} u_\zeta\overline{u_\zeta} \zeta^{i-j}\pm\tfrac{1}{n}\sum_{\zeta\in F} \zeta^{i-j}$, where the signs depend only on $(i,j)$, and $$A_{i,j}+\overline{A_{i,j}}=\pm \tfrac{1}{n}\sum_{\zeta\in H} (u_\zeta\overline{u_\zeta}+v_\zeta\overline{v_\zeta})\zeta^{i-j}\pm \tfrac{1}{n}\sum_{\zeta\in F}2 Re(\zeta^{i-j})=\sum_{\zeta\in H} (N_{\zeta})_{i,j}+2\sum_{\zeta\in F} (N_{\zeta})_{i,j}.$$
\end{proof}

One important corollary is the finiteness theorem:

\begin{thm}\label{thm:finite}
    For a fixed natural number $n$, up to switching equivalence, there are only finitely many dihedral $ETF(2n,n)$.
\end{thm}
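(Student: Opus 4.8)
The plan is to show that the switching equivalence class of a dihedral $\etfn$ is pinned down by a finite amount of data. Fix $n$. By Theorem~\ref{thm:circpart} and the ETF condition, any dihedral $\etfn$ has, up to switching equivalence, a Gram matrix of the form $\Phi^*\Phi = [A,B;B^\top,A^\top]$ with $A$ hermitian, $B$ real, and both either circulant or negacirculant, and $\tfrac12\Phi^*\Phi$ a rank-$n$ self-adjoint idempotent. By Theorems~\ref{thm:str_g} and~\ref{thm:proj_g}, such an idempotent is parametrized by a partition $\mu_n = H\sqcup F\sqcup C$ (resp. $\mu_{2n}\setminus\mu_n = H\sqcup F\sqcup C$), closed under conjugation with $|F|=|C|$, together with a choice of unit vector $(u_\zeta,v_\zeta)\in\CC^2$ for each conjugate pair in $H$ (and a sign for the real roots in $H$). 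The partition data is manifestly finite: there are only finitely many such partitions of the finite set $\mu_n$ (or $\mu_{2n}\setminus\mu_n$). So the entire content is to show that, once we also impose the equiangularity condition, only finitely many choices of the continuous vector parameters $(u_\zeta,v_\zeta)$ can occur, up to switching equivalence.

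The key step is to extract algebraic constraints on $A$ and $B$ from equiangularity. The off-diagonal entries of $\Phi^*\Phi$ all have modulus equal to the Welch bound $c:=\sqrt{(2n-n)/(n(2n-1))} = 1/\sqrt{2n-1}$. In particular $|B_{i,j}| = c$ for all $i,j$ (since $B$ has no diagonal-vs-offdiagonal distinction relative to the Gram: the block $B$ records inner products between the two orbits, all of which are off-diagonal entries of the $2n\times 2n$ Gram). Since $B$ is real, this forces $B_{i,j} = \pm c$, i.e. $\tfrac1c B$ is a $\{\pm1\}$-matrix; and $B$ is (nega)circulant, so it is determined by its first row, a vector in $\{\pm c\}^n$ — finitely many options. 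Likewise the off-diagonal entries $A_{i,j}$ ($i\neq j$) have modulus $c$. Now I would invoke Proposition~\ref{prop:real}: $\operatorname{Re}(A) = \tfrac12\sum_{\zeta\in H}E_\zeta + \sum_{\zeta\in F}E_\zeta$ (resp. with $N_\zeta$), which is \emph{completely determined by the partition data alone}, with no dependence on the $(u_\zeta,v_\zeta)$. Since $|A_{i,j}|=c$ for $i\neq j$ and $\operatorname{Re}(A_{i,j})$ is now a fixed real number (of absolute value $\le c$), the imaginary part $\operatorname{Im}(A_{i,j})$ is forced to be one of the two values $\pm\sqrt{c^2 - \operatorname{Re}(A_{i,j})^2}$ — again finitely many options for each entry, hence finitely many options for the (nega)circulant hermitian matrix $A$ once the partition is fixed.

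Putting this together: given $n$, there are finitely many partitions; given a partition, $\operatorname{Re}(A)$ is determined, $B$ ranges over a finite set, and the off-diagonal part of $A$ ranges over a finite set, while the diagonal of $A$ is $1$ (unit-norm vectors). Hence there are only finitely many candidate Gram matrices $[A,B;B^\top,A^\top]$ that can arise as the Gram of a dihedral $\etfn$, and since a configuration is determined up to switching equivalence by its Gram matrix, there are only finitely many dihedral $\etfn$ up to switching equivalence. I expect the main obstacle to be the bookkeeping in the second step: one must be careful that \emph{every} off-diagonal entry of the full $2n\times 2n$ Gram really does have modulus exactly the Welch bound (it does, by the ETF definition and the fact that all vectors are unit-norm), and in particular that the entries of $B$ are all off-diagonal — there is no index collision between the two orbits — and that the hermitian (nega)circulant structure of $A$ is genuinely preserved under these constraints so that "finitely many first rows" correctly bounds the number of matrices. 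A secondary subtlety is making sure the finite list of Gram matrices is intrinsic, i.e. independent of the switching-equivalence representative chosen in Theorem~\ref{thm:circpart}; this follows because switching equivalence is an equivalence relation and we are counting equivalence classes, so over-counting candidates is harmless.
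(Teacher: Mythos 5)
Your proposal is correct and follows essentially the same route as the paper: the paper's proof likewise notes that equiangularity forces the entries of $B$ to be $\pm 1/(2\sqrt{2n-1})$, invokes Proposition~\ref{prop:real} to get finitely many options for $\operatorname{Re}(A)$, and then uses the known moduli $|A_{i,j}|$ to conclude there are finitely many possible $A$, hence finitely many Gram matrices up to switching equivalence. Your write-up just makes explicit the bookkeeping (the partition data from Theorems~\ref{thm:str_g} and~\ref{thm:proj_g}, and the two choices of $\operatorname{Im}(A_{i,j})$) that the paper leaves implicit.
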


\begin{proof}
    Let $\Phi^*\Phi$ be the Gram of a dihedral $ETF(2n,n)$. Up to switching equivalence, $\tfrac{1}{2}\Phi^*\Phi=[A,B;B^\top,A^\top]$ is a rank $n$ self-adjoint idempotent, and $B$ is a real matrix with entries $\pm 1/2\sqrt{2n-1}$. By Proposition \ref{prop:real} there are finitely many options for $Re(A)$. Since $|A_{i,j}|$ are known, there are finitely many possible values of $A$. Altogether, there are finitely many options for $\Phi^*\Phi$, up to switching equivalence. 
\end{proof}

Another corollary is

\begin{cor}\label{prop:imag}
    If $X=[A,B;B^\top,A^\top]\in \mathcal{SD}_n$ or $\mathcal{PD}_n$ is a self-adjoint idempotent of rank $n$, then $X$ is regular if and only if $A$ is imaginary off the main diagonal. 
\end{cor}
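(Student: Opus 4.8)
The plan is to deduce this directly from Proposition \ref{prop:real}, which already gives a closed formula for $Re(A)$, together with the elementary structure of (nega)circulant matrices. Throughout, write $X=[A,B;B^\top,A^\top]$ with the partition $\mu_n=H\sqcup F\sqcup C$ in the strict case (resp. $\mu_{2n}\setminus\mu_n=H\sqcup F\sqcup C$ in the projective case) as in Theorems \ref{thm:str_g} and \ref{thm:proj_g}. Recall from the proposition preceding Proposition \ref{prop:real} that $X$ is regular precisely when $|H|=n$; since $H,F,C$ partition a set of size $n$, this is equivalent to $F=C=\emptyset$. Also note that ``$A$ is imaginary off the main diagonal'' is the same as saying that the matrix $Re(A)=\tfrac12(A+\overline A)$ is a diagonal matrix. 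So it suffices to prove: $Re(A)$ is diagonal if and only if $|H|=n$.

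The first step is to observe that, by Proposition \ref{prop:real}, $Re(A)$ is circulant (strict case) or negacirculant (projective case). A circulant matrix that happens to be diagonal must be a scalar multiple of $I_n$: its first row can then only be supported at the $(0,0)$-entry, so all of its off-diagonal circulant bands vanish, and the same elementary remark applies to negacirculant matrices. Hence $Re(A)$ diagonal is equivalent to $Re(A)=c\,I_n$ for some real scalar $c$. The second step is to pin down $c$ and the partition. By Lemma \ref{lem:circ_decomp} the idempotents $\{E_\zeta\}_{\zeta\in\mu_n}$ (resp. $\{N_\zeta\}_{\zeta\in\mu_{2n}\setminus\mu_n}$) are linearly independent and sum to $I_n$. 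Writing $Re(A)=c\,I_n=c\sum_\zeta E_\zeta$ and comparing coefficients with the formula of Proposition \ref{prop:real} shows that the coefficient of $E_\zeta$ in $Re(A)$ — which equals $\tfrac12$ for $\zeta\in H$, equals $1$ for $\zeta\in F$, and equals $0$ for $\zeta\in C$ — must be the constant $c$ for every $\zeta$. Since $|H|+|F|+|C|=n$ and $|F|=|C|$, the only way this function can be constant (for $n\ge 1$) is $c=\tfrac12$ with $|F|=|C|=0$, i.e. $|H|=n$. Conversely, when $|H|=n$ Proposition \ref{prop:real} gives $Re(A)=\tfrac12 I_n$, which is diagonal. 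Combining the two directions, $A$ is imaginary off the diagonal if and only if $|H|=n$, i.e. if and only if $X$ is regular.

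There is no genuine obstacle here; the corollary is essentially a repackaging of Proposition \ref{prop:real}. The only point requiring a little care is the final case analysis, where one must use the constraint $|F|=|C|$ — and not merely $|H|+|F|+|C|=n$ — in order to discard the spurious ``all-ones'' ($F=\mu_n$) and ``all-zeros'' ($C=\mu_n$) constant solutions for the coefficient function, leaving only $|H|=n$.
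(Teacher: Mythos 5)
Your proof is correct and follows essentially the same route as the paper: both arguments read off $Re(A)$ from Proposition \ref{prop:real}, use the linear independence of the cyclotomic (resp. nega-cyclotomic) idempotents to see that the coefficient function $\tfrac12$ on $H$, $1$ on $F$, $0$ on $C$ forces $Re(A)$ to be non-scalar unless $|H|=n$, and use the constant-diagonal/(nega)circulant structure to translate ``scalar'' into ``imaginary off the diagonal.'' Your explicit use of $|F|=|C|$ to exclude the all-$F$ and all-$C$ constant solutions is a worthwhile point of care that the paper leaves implicit.
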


\begin{proof}
If $X$ is regular, then $|H|=n$ and $F=\emptyset$, and $Re(A)=\tfrac{1}{2}\sum_{\zeta} N_\zeta$ (or $\tfrac{1}{2}\sum_\zeta E_\zeta$) $=\tfrac{1}{2}I_n$. Conversely, if $X$ is not regular, then $Re(A)$ is a non-scalar matrix, since the idempotent systems $\{E_\zeta\}$ and $\{N_\zeta\}$ are linearly independent. Since the diagonal of $A$ is constant (all of the idempotents in the system have this property) , $Re(A)$ must admit a non-zero off-diagonal entry.    
\end{proof}


The construction in Lemma \ref{lem:shm} is not suitable for the type of symmetries that we study in this paper. The next lemma gives a slight variant.

\begin{lemma}\label{lem:newshm}
    If $H$ is skew Hadamard of size $2n$, write $H=[P,Q;-Q^\top,R]$ for  $P,Q,R\in\{-1,1\}^{n\times n}$. Then 
    \be\label{eq:shm2etf}
    M=M(H) := I_{2n}+\frac{1}{\sqrt{2n-1}}\begin{bmatrix}
        \i(P-I_n) & Q\\ Q^\top & \i(R-I_n)
    \end{bmatrix}
    \ee is the Gram matrix of an $\etfn$.
\end{lemma}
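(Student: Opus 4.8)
The plan is to mimic the forward direction of Lemma~\ref{lem:shm}, but bookkeeping the block structure so that the conjugating matrix that turns $H$ into $M(H)$ is manifestly a sign change applied to a block. Concretely, write $H=[P,Q;-Q^\top,R]$ and set $D=\diag(I_n,\i I_n)$; I first observe that $D$ is unitary and that
\[
D\,H\,D^* \;=\; \begin{bmatrix} P & \i Q\\ \i Q^\top & R\end{bmatrix}\cdot(\text{a scalar-free rearrangement})
\]
is \emph{not} quite right, so instead I would massage things directly. The cleaner route: let $C := H-I_{2n}$, which is a skew-symmetric conference matrix of order $2n$ (this is the standard equivalence quoted right after the definition of skew Hadamard), so $C^\top=-C$ and $CC^\top=(2n-1)I_{2n}$, hence $C^2=-(2n-1)I_{2n}$. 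Partitioning $C=[P-I_n,\;Q;\;-Q^\top,\;R-I_n]$, skew-symmetry forces $P-I_n$ and $R-I_n$ to be skew-symmetric $n\times n$ blocks.

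Next I would exhibit the explicit unitary that converts $C$ into the bracketed matrix in \eqref{eq:shm2etf}. Put $U=\diag(I_n,\i I_n)\in U(2n)$ and compute
\[
U\!\begin{bmatrix}\i(P-I_n) & Q\\ Q^\top & \i(R-I_n)\end{bmatrix}\!U^* \;=\;\begin{bmatrix}\i(P-I_n) & \i Q\\ \i Q^\top & -\i(R-I_n)\end{bmatrix},
\]
which still is not literally $\i C$, so the honest statement is that the bracketed matrix equals $\i$ times a matrix $\widetilde C$ obtained from $C$ by replacing the off-diagonal blocks $Q,-Q^\top$ by $-\i Q,-\i Q^\top$ — equivalently $\widetilde C=U_0 C U_0^*$ for $U_0=\diag(e^{\i\pi/4}I_n,e^{-\i\pi/4}I_n)$ up to a harmless global phase. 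The point I actually need is only the algebraic identities, so rather than chase the exact conjugator I would simply verify by direct block multiplication that, with $\widetilde C$ denoting the bracketed matrix in \eqref{eq:shm2etf},
\[
\widetilde C^{\,*}=\widetilde C \qquad\text{and}\qquad \widetilde C^{\,2}=(2n-1)I_{2n}.
\]
The first follows because $\i(P-I_n)$ and $\i(R-I_n)$ are Hermitian (imaginary times skew-symmetric real) and the off-diagonal blocks $Q$, $Q^\top$ are real transposes of each other. For the second, expanding the $2\times 2$ block product gives four conditions — e.g. the $(1,1)$ block is $-(P-I_n)^2+QQ^\top$, which must equal $(2n-1)I_n$; the $(1,2)$ block is $\i(P-I_n)Q+\i Q(R-I_n)$, which must vanish — and each of these is exactly one of the block identities contained in $CC^\top=(2n-1)I_{2n}$ together with $C^\top=-C$ (so $QQ^\top-(P-I_n)^2=(2n-1)I_n$, $(P-I_n)Q=Q(R-I_n)$ after using skew-symmetry, etc.). Granting these, set $G:=M(H)=I_{2n}+\tfrac{1}{\sqrt{2n-1}}\widetilde C$. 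Then $G$ is Hermitian, and $(G-I_{2n})^2=I_{2n}$, so $(\tfrac12 G)^2=\tfrac12 G$, i.e. $\tfrac12 G$ is a self-adjoint idempotent. Its rank equals the multiplicity of the eigenvalue $1$ of $G$, i.e. the multiplicity of the eigenvalue $+\sqrt{2n-1}$ of $\widetilde C$; since $\widetilde C$ is unitarily similar to $C$ (or directly: $\tr\widetilde C=0$ because the diagonal blocks are skew-symmetric real times $\i$, hence have zero trace) and $\widetilde C^2=(2n-1)I$, the eigenvalues are $\pm\sqrt{2n-1}$ each with multiplicity $n$. Hence $\tfrac12 G$ is a rank-$n$ self-adjoint idempotent whose diagonal entries are $\tfrac12$ and whose off-diagonal entries have modulus $\tfrac{1}{2\sqrt{2n-1}}$, i.e. (after scaling by $2$) $G$ is the Gram matrix of an $\etfn$, since a positive semidefinite rank-$n$ matrix with unit diagonal and all off-diagonal moduli equal to the Welch bound $\sqrt{(2n-n)/(n(2n-1))}=1/\sqrt{2n-1}$ is precisely the Gram of an $\etfn$.

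I expect the only real work — and the main obstacle — to be the block-by-block verification that the four $2\times 2$ block identities $\widetilde C^2=(2n-1)I_{2n}$ are genuinely equivalent to $CC^\top=(2n-1)I_{2n}$ once skew-symmetry of $P-I_n$ and $R-I_n$ is used to replace $Q^\top$-type terms; the signs introduced by the factors of $\i$ on the diagonal blocks versus the real off-diagonal blocks must cancel exactly, and getting that cancellation right (rather than off by a sign, which would make $\widetilde C^2=-(2n-1)I$ and destroy the rank count) is the delicate point. Everything else — Hermiticity, the idempotent computation, the trace/rank argument, and recognizing the resulting Gram matrix as that of an $\etfn$ via the Welch-bound characterization — is routine and parallels Lemma~\ref{lem:shm} verbatim.
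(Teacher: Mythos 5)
Your proposal is correct and follows essentially the same route as the paper: the paper likewise observes that $M$ is Hermitian because $H+H^\top=2I$, that $\tfrac12 M$ is a rank-$n$ self-adjoint idempotent, and defers the block computations to Lemma~\ref{lem:shm}, with Remark~\ref{rem1}(a) recording exactly the diagonal similarity by $\diag(I_n,\i I_n)$ that you started to compute (in fact $U\widetilde{C}U^*=\i C^\top=-\i C$, so the conjugation you abandoned does work on the nose and the direct block verification is not needed). The one blemish is the sign in your parenthetical identity, which should read $(P-I_n)Q=-Q(R-I_n)$; with that correction the $(1,2)$ block of $\widetilde{C}^2$ vanishes exactly as you claim, and the rest of your argument goes through.
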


\begin{proof}
The matrix $H-I_{2n}=[P-I_n, Q; -Q^\top, R-I_n]$ is a skew symmetric conference matrix, so it has 0's on the diagonal. This indicates $M-I_{2n}$ also has 0's on the diagonal. Since the off diagonal entries of $[P-I_n, Q; -Q^\top, R-I_n]$ are $\pm1$, we then have the off diagonal entries of $M$ to have modulus $\frac{1}{\sqrt{2n-1}}$ following the Welch bound \eqref{equ:welch}.

    The matrix $M$ is self-adjoint since $H+H^\top = I$. 
   Since $H$ is skew Hadamard, it follows that $\tfrac{1}{2}M$ is an idempotent of rank $n$. The details are similar to the proof of Lemma \ref{lem:shm}.
   \end{proof}

\begin{remark}\label{rem1}
    \begin{itemize}
        \item[(a)]  The constructions of Lemma \ref{lem:newshm} and Lemma \ref{lem:shm} are switching equivalent. The Gram matrices are diagonally similar by $\Pi = \diag(I_n,\i I_n)$. 
        \item[(b)] Automorphisms of $H$ by monomial conjugations carry over to automorphisms of $\widetilde{H}$ and in turn to $M(H)$.
        \item[(c)] The definition of $M(H)$ depends on the specific ordering we choose to use for the matrix $H$. But from (a) it is well-defined up to switching equivalence.
    \end{itemize}
\end{remark}

In view of  Lemma \ref{lem:newshm} we have the following characterization of regular $\etfn$. 

\begin{thm}\label{thm:main}
Let $\Phi=SD_nv$ (resp. $\Phi=PD_nv$)  be a dihedral configuration. Then $\Phi$ is a regular $\etfn$ if and only if there exists a skew Hadamard matrix $H=[P,Q,-Q^\top,P^\top]$ with $P, Q$ circulant (resp. negacirculant), for which, up to switching equivalence, 
    \be\label{eq:MH} \Phi^*\Phi=\ I_{2n}+\frac{1}{\sqrt{2n-1}}\begin{bmatrix}
        \i(P-I_n) & Q\\ Q^\top & \i(P^\top-I_n)\end{bmatrix}.\ee
\end{thm}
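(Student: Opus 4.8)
The strategy is to combine the structural classification of dihedral tight frames from Theorem~\ref{thm:str_g} / Theorem~\ref{thm:proj_g} (together with the regularity criterion and Corollary~\ref{prop:imag}) with Lemma~\ref{lem:newshm} and the converse direction of Lemma~\ref{lem:shm}. Let me handle the two implications separately.

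For the ``if'' direction: suppose $H=[P,Q;-Q^\top,P^\top]$ is skew Hadamard with $P,Q$ circulant (resp. negacirculant) and $\Phi^*\Phi$ equals the matrix \eqref{eq:MH} up to switching equivalence. By Lemma~\ref{lem:newshm} (with $R=P^\top$), this matrix is the Gram of an $\etfn$, so $\Phi$ is an $\etfn$. It has the block form $[A,B;B^\top,A^\top]$ with $A=I_n+\tfrac{\i}{\sqrt{2n-1}}(P-I_n)$ and $B=\tfrac{1}{\sqrt{2n-1}}Q$; here $A$ is hermitian (since $P-I_n$ is skew-symmetric, as $P+P^\top=2I_n$ from $H+H^\top=2I_{2n}$) and $B$ is real, and both are circulant (resp. negacirculant) because $P,Q$ are. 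By the converse part of Theorem~\ref{thm:circpart}, since $\Phi$ is reduced (an ETF has pairwise non-proportional vectors), $\Phi$ is a strict (resp. projective) dihedral configuration, i.e. $\Phi=SD_nv$ (resp. $PD_nv$). Finally, $B=\tfrac{1}{\sqrt{2n-1}}Q$ is a $\pm 1/\sqrt{2n-1}$ matrix, hence invertible (as $QQ^\top$ is, $Q$ being a block of a Hadamard matrix up to sign), so by the discussion following the definition of regularity, $\Phi$ is regular.

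For the ``only if'' direction: suppose $\Phi=SD_nv$ (resp. $PD_nv$) is a regular $\etfn$. By Theorem~\ref{thm:circpart} we may, up to switching equivalence, write $\tfrac{1}{2}\Phi^*\Phi=[A,B;B^\top,A^\top]$, a rank-$n$ self-adjoint idempotent with $A$ hermitian circulant (resp. negacirculant) and $B$ real circulant (resp. negacirculant). By Corollary~\ref{prop:imag}, regularity forces $A$ to be purely imaginary off the diagonal; since the diagonal of $A$ is constant and $\tfrac{1}{2}\Phi^*\Phi$ has $1$'s on the diagonal, $A = \tfrac12 I_n + \tfrac{\i}{2}A'$ for a real skew-symmetric matrix $A'$ that is circulant (resp. negacirculant). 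Because $\Phi$ is an ETF, every off-diagonal entry of $\Phi^*\Phi$ has modulus $1/\sqrt{2n-1}$ (the Welch bound \eqref{equ:welch}), so writing $P := I_n + A'$ and $Q := \sqrt{2n-1}\cdot 2B = \sqrt{2n-1}\,(B\text{-block of }\Phi^*\Phi)$, we get that $P\in\{-1,1\}^{n\times n}$ with $P+P^\top = 2I_n$ and $Q\in\{-1,1\}^{n\times n}$, both circulant (resp. negacirculant), and $\Phi^*\Phi$ has exactly the form on the right-hand side of \eqref{eq:MH}. It remains to verify $H:=[P,Q;-Q^\top,P^\top]$ is skew Hadamard. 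That $H+H^\top = 2I_{2n}$ is immediate from $P+P^\top=2I_n$. For $HH^\top = 2nI_{2n}$: the rank-$n$ idempotent condition on $\tfrac12\Phi^*\Phi$ is equivalent (as in the proof of Lemma~\ref{lem:shm}) to $C^2 = -(2n-1)I_{2n}$ where $C = \Phi^*\Phi - I_{2n} = \tfrac{1}{\sqrt{2n-1}}[\i(P-I_n),Q;Q^\top,\i(P^\top-I_n)]$; multiplying out, $C^2=-(2n-1)I_{2n}$ translates directly into the statement that $[P-I_n,Q;-Q^\top,P^\top-I_n] = H - I_{2n}$ is a skew-symmetric conference matrix of order $2n$, which is equivalent to $H$ being skew Hadamard.

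\textbf{Main obstacle.} The conceptual steps are all in place from the earlier results; the one point requiring genuine care is the ``only if'' bookkeeping that reassembles $A$ and $B$ into integer $\pm1$ blocks $P,Q$ and then tracks the idempotent/rank condition through to the conference-matrix identity for $H-I_{2n}$. In particular one must be careful that the \emph{same} switching representative simultaneously realizes the block form of Theorem~\ref{thm:circpart}, keeps $A$ imaginary-off-diagonal via Corollary~\ref{prop:imag}, and produces the precise normalization $1/\sqrt{2n-1}$ on every off-diagonal entry; and one must check that $P^\top$ (not some other block) appears in the bottom-right of $H$, which is exactly what the symmetry $A\mapsto A^\top$ of the Gram block structure delivers. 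The algebra relating $C^2=-(2n-1)I$ to ``$H-I$ is a skew conference matrix'' is the same computation as in Lemma~\ref{lem:shm}/Lemma~\ref{lem:newshm} and can be cited rather than repeated.
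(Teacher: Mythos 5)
Your proposal follows the same route as the paper's own proof: Theorem~\ref{thm:circpart} for the (nega)circulant block structure, Corollary~\ref{prop:imag} to translate regularity into ``$A$ is imaginary off the diagonal,'' the equiangularity/Welch bound to force $\pm 1$ entries, and the Lemma~\ref{lem:shm}/Lemma~\ref{lem:newshm} computation to pass between the rank-$n$ idempotent condition and skew-Hadamardness. The only stylistic difference is that the paper conjugates by $\Pi=\diag(I_n,\i I_n)$ (Remark~\ref{rem1}(a)) so as to land on a real matrix $C$ and then quotes the converse half of Lemma~\ref{lem:shm} verbatim, whereas you expand the idempotent identity directly on the complex block matrix; both work, the commutativity of the (nega)circulant algebra killing the off-diagonal blocks in your expansion.

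Two points need repair. First, in the ``if'' direction your justification of regularity --- ``$QQ^\top$ is invertible, $Q$ being a block of a Hadamard matrix'' --- is not a valid inference: a block of a Hadamard matrix need not be nonsingular, and in the paper the nonsingularity of $Q$ is itself \emph{deduced from} regularity (see the theorem following Lemma~\ref{lem:no}), so as written the step is circular. The correct and shorter route, which the paper takes, is to observe that $A=I_n+\tfrac{\i}{\sqrt{2n-1}}(P-I_n)$ is imaginary off the diagonal because $P$ is real with unit diagonal, and then invoke Corollary~\ref{prop:imag} directly. Second, there are two normalization slips in the ``only if'' direction: with your $A=\tfrac12 I_n+\tfrac{\i}{2}A'$ the matrix $A'$ has off-diagonal entries $\pm 1/\sqrt{2n-1}$, so the $\{\pm1\}$-matrix is $P=I_n+\sqrt{2n-1}\,A'$ rather than $I_n+A'$; and with $C:=\Phi^*\Phi-I_{2n}$ (which already carries the factor $1/\sqrt{2n-1}$) the idempotent condition reads $C^2=I_{2n}$, the identity $C^2=-(2n-1)I$ belonging to the differently normalized $C$ of Lemma~\ref{lem:shm}. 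Both are bookkeeping errors rather than conceptual gaps, and once they are fixed the argument matches the paper's.
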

\begin{proof}
Suppose first that $\Phi$ is regular dihedral. Then up to switching equivalence, by Theorem \ref{thm:circpart}, we may write 
$$\Phi^*\Phi=G_2=\begin{bmatrix}A&B\\B^\top&A^\top\end{bmatrix}:=\ I_{2n}+\frac{1}{\sqrt{2n-1}}\begin{bmatrix}
        \i(P-I_n) & Q\\ Q^\top & \i(P^\top-I_n)\end{bmatrix},$$ for $A,B$ circulant (resp. negacirculant) and $P,Q$ are defined by the formula.
By direct calculation, we have $P-I=-\i\sqrt{2n-1}(A-I), Q = \sqrt{2n-1}B$. We know that $Q$ is real by Theorem \ref{thm:circpart}. Since $\Phi$ is also regular, then by Corollary \ref{prop:imag}, $P$ is real too. Let $$G_1=I_{2n}+\frac{\i}{\sqrt{2n-1}}\begin{bmatrix}P-I&Q\\-Q^\top&P^\top-I\end{bmatrix}:=I+\frac{\i}{\sqrt{2n-1}}C.$$
Remark \ref{rem1}(a) states that
$\Pi G_1\Pi^*=G_2$ where $\Pi = \diag(I_n, iI_n)$, ans they are switching equivalent. Now, if $\Phi$ is an ETF, then $G_1$ is the gram matrix of an ETF. Since $C$ is real, Lemma \ref{lem:shm} shows that $H=C+I$ is skew Hadamard.\\
 

Conversely, if $H=[P,Q,-Q^\top,P^\top]$ is skew Hadamard, then Lemma \ref{lem:newshm} tells us that \eqref{eq:MH} is the gram matrix of an $\etfn$, and that furthermore this ETF it is regular by Corollary \ref{prop:imag}.

Now, either $P,Q$ are both circulant or both negacirculant. The first option is for strict dihedral ETF and the second option is for genuine projective dihedral ETF.
\end{proof}

Theorem \ref{thm:main} transfers the task of searching for dihedral $\etfn$ to searching for skew Hadamard matrix structured like $[P,Q,-Q^\top,P^\top]$ where $P, Q$ both circulant or negacirculant. This is still challenging task, but at least lays our problem on a discrete ground. Let us prove now that such Hadamard matrix with $P, Q$ both circulant does not exist.

\begin{lemma}\label{lem:no}
    There is no skew Hadamard matrix that is also 2-circulant. 
\end{lemma}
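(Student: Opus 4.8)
The statement to prove is that no $2n \times 2n$ skew Hadamard matrix $H$ can have all four $n\times n$ blocks $[A,B;C,D]$ circulant (Definition~\ref{def:2circ}). The natural approach is to work in the algebra $\mathcal{C}_n$ of circulant matrices and use the Fourier diagonalization from Lemma~\ref{lem:circ_decomp}(a): a circulant $A$ acts on each cyclotomic idempotent $E_\zeta$ by $AE_\zeta = a_\zeta(A) E_\zeta$, so $A \mapsto (a_\zeta(A))_{\zeta \in \mu_n}$ is a $*$-algebra isomorphism $\mathcal{C}_n \cong \CC^n$. Writing $H = [A,B;C,D]$, the Hadamard condition $HH^\top = 2n I_{2n}$ together with the skew condition $H + H^\top = 2I_{2n}$ translate, block-by-block, into a finite system of scalar equations indexed by $\zeta \in \mu_n$. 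The plan is to extract from this system a contradiction at the Fourier mode $\zeta = 1$, i.e. by summing all entries — equivalently, applying the all-ones vector.

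First I would record the block equations. From $H+H^\top=2I$ we get $A+A^\top = 2I_n$, $D+D^\top=2I_n$, and $C = -B^\top$. Since $A$ is circulant, $A^\top$ is also circulant with $a_\zeta(A^\top) = a_{\bar\zeta}(A)$; combined with $A+A^\top = 2I$ this forces $a_\zeta(A) + a_{\bar\zeta}(A) = 2$ for all $\zeta$, and in particular $a_1(A) = 1$, and likewise $a_1(D) = 1$. From $HH^\top = 2nI$ the diagonal blocks give $AA^\top + BB^\top = 2n I_n$ and $CC^\top + DD^\top = 2nI_n$; the off-diagonal block gives $AC^\top + BD^\top = 0$, i.e. (using $C=-B^\top$) $-AB + BD^\top = 0$. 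Passing to Fourier coefficients at $\zeta = 1$: let $p = a_1(A)=1$, $q = a_1(B)$, $r = a_1(D) = 1$. Note $q$ is real because $B$ is a real matrix (all entries $\pm1$, in fact an integer matrix — actually here $B$ has entries in $\{\pm1\}$ so $a_1(B)$ is a real integer, namely the common row sum of $B$). The equation $AA^\top + BB^\top = 2nI$ at $\zeta=1$ reads $|p|^2 + |q|^2 = 2n$, i.e. $1 + q^2 = 2n$. The off-diagonal equation $-AB + BD^\top = 0$ at $\zeta=1$ reads $-pq + qr = 0$, i.e. $q(r-p) = q(1-1) = 0$, which is vacuous, so this does not directly help; instead I would use the row-sum count more carefully.

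The sharper route is a global counting argument. Since $A$ is circulant with $A + A^\top = 2I$, its entries off the diagonal come in antisymmetric pairs, so the sum of all entries of $A$ equals $n$ (the diagonal contributes $n$, the off-diagonal contributes $0$); and $A$ has entries $\pm1$ with $1$'s on the diagonal, hence each row of $A$ has some number $k$ of $+1$'s and $n-k$ of $-1$'s with row sum $a_1(A) = 2k - n$. But we just showed $a_1(A) = 1$, forcing $2k - n = 1$, so $n$ must be odd. However, skew Hadamard matrices of order $2n \ge 4$ force $2n \equiv 0 \pmod 4$, i.e. $n$ is even — contradiction. (For the degenerate orders $2n \in \{1,2\}$, i.e. $n \le 1$, one checks directly that $2$-circulant skew Hadamard matrices still fail: order $1$ is not skew Hadamard of the stated form, and $n=1$ gives order $2$ where $H=[1,1;-1,1]$ is skew Hadamard but trivially $1\times1$ blocks — one should note the lemma is only interesting for $n\ge 2$, and indeed for $n=1$ the blocks are $1\times1$ and the claim as stated may be read as vacuous or one excludes it.)

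The main obstacle is being careful about the parity bookkeeping and the edge cases: one must verify that the diagonal of $A$ is genuinely all $1$'s (this comes from $H$ having $\pm1$ entries together with $A+A^\top = 2I$, forcing $A_{ii} = 1$), that $A$'s off-diagonal entries pair up to cancel in the row sum (circulant $+$ skew-symmetric off the diagonal), and that $a_1$ of a $\pm1$ circulant with unit diagonal is an \emph{odd} integer. Once those are nailed down, the contradiction "$a_1(A)$ is odd, hence $n$ is odd, but $4 \mid 2n$ forces $n$ even" is immediate and clean. An alternative, perhaps even shorter, is to note that $D = A^\top$ would follow from combining the skew relation with the structure in Theorem~\ref{thm:circpart} if $v$ were in play, but here $H$ is just an abstract matrix, so I would stick with the direct parity argument rather than invoking the frame-theoretic results.
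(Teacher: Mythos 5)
Your proof is correct (for $n\ge 2$, the same implicit restriction the paper makes) and it shares its starting point with the paper's argument but finishes differently. Both proofs evaluate the ``sum of the entries of a row/column'' functional --- your $a_1(\cdot)$ at the Fourier mode $\zeta=1$ is exactly the paper's linear functional $S$ --- on the skew relation to conclude $S(A)=a_1(A)=1$. From there the paper keeps going: it uses that $S$ extends to a \emph{ring} homomorphism from 2-circulant matrices into $M_2(\CC)$, applies it to $(H-I)(H-I)^\top=(2n-1)I$, obtains $S(B)^2=S(C)^2=2n-1$, and derives the contradiction $2n-1\equiv 3\pmod 4$ since $4\mid 2n$. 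You instead stop at the additive step and observe that a row of the $\pm1$ circulant block $A$ with $k$ entries equal to $+1$ has sum $2k-n$, so $a_1(A)=1$ forces $n$ odd, contradicting $4\mid 2n$. Your route is more economical --- it never touches $B,C,D$, never uses multiplicativity or the orthogonality relation beyond the standard divisibility fact, and in effect proves the slightly stronger statement that no skew Hadamard matrix of order $2n\ge4$ can have a circulant $n\times n$ diagonal block --- while the paper's route yields the extra arithmetic constraint $S(B)^2=2n-1$ as a by-product. One small caution: in your parenthetical you first assert that the degenerate cases ``still fail,'' but for $n=1$ the matrix $[1,1;-1,1]$ \emph{is} a 2-circulant skew Hadamard matrix of order $2$ with $1\times1$ circulant blocks, so the lemma genuinely requires $n\ge2$; you do eventually note this, and the paper's proof makes the same tacit assumption the moment it invokes $4\mid 2n$.
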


\begin{proof}
Let $C=(c_{ij})$ be circulant, which is determined by its first column $[c_{11}, c_{21}, \cdots, c_{n1}]^\top$.
We define a linear functional $S$ from the space of all $n\times n$ circulant matrices to $\C$ as $S(C)=\sum_{i=1}^nc_{i1}$.
It is easy to check that $S$ is a ring homomorphism: $S(CD)=S(C)S(D)$ and $S(C+D)=S(C)+S(D)$ for circulant matrices $C, D$. 
We can extend $S$ to be defined on 2-circulant matrices $H=[A, B; C, D]$ by letting $S(H)=[S(A), S(B); S(C), S(D)]$. This defines again a ring homomorphism from the space of 2-circulant matrices to $M_2(\C)$.\\

Now suppose there exists a 2-circulant skew Hadamard matrix $H=[A,B; C, D]$. We must have $A-I=-(A-I)^\top$. Applying $S$ to this equation, we get $S(A-I)=-S(A-I)$ ($S$ respects the transpose). Likewise, $S(D-I)=0$.

Let $b=S(B), c=S(C)$. Now apply $S$ to $(H-I)(H-I)^\top = (2n-1)I$, we get 
\begin{align*}
&S(H-I)S(H^T-I)=(2n-1)I\\
\Longleftrightarrow&\begin{bmatrix}0&b\\c&0\end{bmatrix}\begin{bmatrix}0&c\\b&0\end{bmatrix}=\begin{bmatrix}2n-1&0\\0&2n-1\end{bmatrix}\\
\Longleftrightarrow& \ b^2=c^2=2n-1
\end{align*}
A skew Hadamard matrix requires its order $2n$ being a multiple of 4, so $b^2\equiv3 \text{ mod 4}$, which is impossible for an integer $b$.
\end{proof}

Another structural result about 2-negacirculant skew Hadamard matrices is the following.

\begin{thm}
    Let $H=[P,Q;-Q^\top,R^\top]$ be a 2-negacirculant skew Hadamard matrix. Then $Q$ is nonsingular, and $R=P$.
\end{thm}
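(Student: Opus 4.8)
The plan is to exploit the ring homomorphism $S$ used in Lemma~\ref{lem:no}, but now adapted to negacirculant matrices, together with the skew-Hadamard identities. First I would observe that the natural analogue of $S$ in the negacirculant setting is evaluation at a root $\zeta$ of $-1$, i.e. the $*$-algebra maps $a_\zeta(\cdot)$ from Lemma~\ref{lem:circ_decomp}(b); the simplest one to use is $\zeta=-1$, giving a ring homomorphism $s:\mathcal N_n\to\CC$, $s(\negc(v))=\sum_j (-1)^j v_j$ (the alternating sum of the first row), which respects transpose because it is a $*$-algebra homomorphism landing in $\CC$ with trivial involution. Extend it blockwise to $\mathcal{PD}_n$-style $2\times 2$ block matrices, $s([X,Y;Z,W])=[s(X),s(Y);s(Z),s(W)]\in M_2(\CC)$, again a ring homomorphism respecting conjugate-transpose. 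Since $n$ is even, $(-1)^n=1$, so $-1$ is not an $n$th root of unity in the "bad" sense here; one should double-check that $s$ is well-defined on $\mathcal N_n$, which follows from Lemma~\ref{lem:circ_decomp}(b) since $\zeta=-1\in\mu_{2n}\setminus\mu_n$ precisely when $n$ is even.

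Next I would apply $s$ to the defining equations of a 2-negacirculant skew Hadamard matrix $H=[P,Q;-Q^\top,R^\top]$. The skew condition $H+H^\top=2I_{2n}$ gives $P+P^\top=2I_n$ and $R+R^\top=2I_n$ and no constraint linking $P$ to $R$ directly, but applying $s$: write $p=s(P), q=s(Q), r=s(R)$; the off-diagonal block equation $Q$ is free, while from $P+P^\top=2I$ we get $2p = s(2I_n) = 2\sum_j(-1)^j\delta_{0j}\cdot$(appropriate normalization)$=2$, hence $p=1$, and likewise $r=1$. (Here I must be careful: $s(I_n)$ is the alternating sum of the first row of the identity negacirculant, which is $1$.) Then applying $s$ to $HH^\top=2nI_{2n}$, i.e. to $(H-I)(H-I)^\top = (2n-1)I_{2n}$ after using skewness — more precisely $HH^\top=2nI$ with $H-I$ skew-symmetric conference — yields a $2\times 2$ matrix identity. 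Computing $s(H)=[p,q;-q,r]=[1,q;-q,1]$, and $s(H)s(H)^\top=[1+q^2, \,q-q;\,-q+q,\,q^2+1]=(1+q^2)I_2$. Setting this equal to $2n I_2$ forces $q^2 = 2n-1$. Since $2n\equiv 0 \pmod 4$, we get $q^2\equiv 3\pmod 4$; but $q$ is a (rational, in fact algebraic) integer — actually $q\in\ZZ$ since $P,Q$ have entries $\pm1$ and $q$ is a signed sum — contradiction. Wait: this would say \emph{no} such matrix exists, contradicting the existence of Paley examples. So $\zeta=-1$ is the wrong evaluation; instead I would use a \emph{primitive} $2n$th root of $-1$, say $\zeta=\omega_{2n}$, where $a_\zeta$ lands in $\CC$ but $a_\zeta(Q)$ is no longer forced to be rational, so no parity obstruction arises — the point of the theorem is a structural statement ($Q$ nonsingular, $R=P$), not a non-existence result.

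Given that correction, here is the real argument. The condition that $\tfrac12 M(H)$ is a rank-$n$ idempotent (Lemma~\ref{lem:newshm}, Theorem~\ref{thm:main}) means the corresponding element of $\mathcal{PD}_n\cong M_2(\CC)^{n/2}$ is a projection of rank $n$, and by Theorem~\ref{thm:proj_g} with $H$-set of full size (regularity, Corollary~\ref{prop:imag}), in \emph{every} $M_2(\CC)$ component it is a rank-$1$ projection. Passing to a single component via $a_\zeta$ for each conjugate pair $\{\zeta,\bar\zeta\}\subset\mu_{2n}\setminus\mu_n$, the image of $H-I$ is a $2\times 2$ skew-symmetric-type matrix whose $(1,2)$ and $(2,1)$ entries are $a_\zeta(Q)$ and $a_{\bar\zeta}(Q)=\overline{a_\zeta(Q)}$ (since $Q$ is real), and whose diagonal entries are $a_\zeta(P)-1$ and $a_{\bar\zeta}(R)-1$. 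Nonsingularity of $Q$ is equivalent to $a_\zeta(Q)\neq 0$ for all $\zeta\in\mu_{2n}\setminus\mu_n$: if some $a_\zeta(Q)=0$, then in that $M_2(\CC)$ component the idempotent would be diagonal, forcing its rank to be $0$ or $2$ rather than $1$, contradicting regularity — so $Q$ is nonsingular. For $R=P$: compare the two diagonal blocks using the rank-$1$ (hence trace-$1$, determinant-$0$) conditions on the idempotent $\tfrac12 M(H)$ in each component. The trace condition gives one linear relation and the determinant condition a quadratic one relating $a_\zeta(P), a_\zeta(R), |a_\zeta(Q)|^2$; combined with the Welch-bound normalization $|a_\zeta(Q)|^2 = \text{const}$ and $\mathrm{Re}\,a_\zeta(P)=\mathrm{Re}\,a_\zeta(R)=1$ (from $P+P^\top=R+R^\top=2I$, apply $a_\zeta$ and its conjugate), one solves to get $a_\zeta(P)=a_\zeta(R)$ for all $\zeta$; since the $a_\zeta$ jointly separate $\mathcal N_n$, this gives $P=R$.

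\textbf{Main obstacle.} The delicate point is getting the bookkeeping right across the two realizations — the "$G_1$ versus $G_2$" diagonal twist of Remark~\ref{rem1}(a) — so that the skew-Hadamard conditions on $[P,Q;-Q^\top,R^\top]$ translate cleanly into idempotent conditions on $\mathcal{PD}_n$ in the \emph{other} ordering where the lower-left block is $Q^\top$, not $-Q^\top$; I expect the sign conventions and the precise form of the $2\times 2$ reduction $a_\zeta$ to be where care is needed, whereas the algebra of rank-$1$ $2\times 2$ projections (trace $1$, determinant $0$) is routine once set up correctly.
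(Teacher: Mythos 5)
Your overall strategy --- establish regularity via Corollary \ref{prop:imag} to get $Q$ nonsingular, then force $R=P$ --- is the right one and is essentially the paper's, but there is a genuine gap in how you set it up. Everything you do in the ``real argument'' (Theorem \ref{thm:proj_g}, the $M_2(\CC)^{n/2}$ component analysis, Corollary \ref{prop:imag}) applies to self-adjoint idempotents \emph{inside} $\mathcal{PD}_n$, i.e.\ to matrices of the shape $[A,B;B^\top,A^\top]$ with $A,B$ negacirculant. But the lower-right block of $M(H)$ for $H=[P,Q;-Q^\top,R^\top]$ is $I+\tfrac{\i}{\sqrt{2n-1}}(R^\top-I)$, which equals $A^\top$ for $A=I+\tfrac{\i}{\sqrt{2n-1}}(P-I)$ only when $R=P$ --- the very conclusion you are after. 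So the component decomposition is not licensed for $M(H)$, and your ``main obstacle'' paragraph worries about the wrong thing (the sign of the lower-left block, which Remark \ref{rem1}(a) already handles) rather than this. The paper's fix is to first pass to the auxiliary matrix $H'=[P,Q;-Q^\top,P^\top]$: using that negacirculant matrices commute and are closed under transpose, the $(1,1)$ block $PP^\top+QQ^\top=2nI$ of $HH^\top=2nI$ shows $H'$ is itself a $2$-negacirculant skew Hadamard matrix of the required shape; $M(H')$ then genuinely lies in $\mathcal{PD}_n$, its $A$-block is imaginary off the diagonal since $P$ is real with unit diagonal, so Corollary \ref{prop:imag} gives regularity, which is by definition equivalent to $B=Q/\sqrt{2n-1}$ being invertible. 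Finally $R=P$ falls out of the $(1,2)$ block of $HH^\top=2nI$, namely $PQ=QR$, by cancelling the now-invertible $Q$ and using $PQ=QP$.

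Two smaller points. First, your claim that $a_\zeta(Q)=0$ makes the component idempotent diagonal ``forcing its rank to be $0$ or $2$ rather than $1$'' is false: $\diag(1,0)$ is a diagonal rank-one idempotent. The correct link is $a_\zeta(Q)\doteq u_\zeta\overline{v_\zeta}=0\Rightarrow u_\zeta=0$ or $v_\zeta=0\Rightarrow A$ singular; but in fact no component analysis is needed at all, since regularity is \emph{equivalent} to $B$ (hence $Q$) being invertible. Second, your route to $R=P$ via trace/determinant plus $\mathrm{Re}\,a_\zeta(P)=\mathrm{Re}\,a_\zeta(R)=1$ and $|a_\zeta(P)|=|a_\zeta(R)|$ only pins down $a_\zeta(R)\in\{a_\zeta(P),\overline{a_\zeta(P)}\}$; you need the off-diagonal relation $a_\zeta(Q)\bigl(a_\zeta(R)-a_\zeta(P)\bigr)=0$ (the evaluated form of $PQ=QR$) together with $a_\zeta(Q)\neq 0$ to finish, which is just the paper's cancellation argument in disguise. (The abandoned false start also contains an error worth noting: $-1\in\mu_{2n}\setminus\mu_n$ precisely when $n$ is \emph{odd}, so for even $n$ the alternating-sum map is not even a well-defined homomorphism on $\mathcal N_n$.)
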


\begin{proof}
    Clearly $H'=[P,Q;-Q^\top,P^\top]$ is a 2-negacirculant skew Hadamard matrix and $M(H')$ is the Gram of a dihedral $ETF(2n,n)$ by Lemma \ref{lem:newshm} and Theorem \ref{thm:circpart}. Since $i(P-I_n)$ is imaginary, then Proposition \ref{prop:imag} tells us that the ETF is regular. In particular the matrix $Q$ is nonsingular, and from this we must have $R=P$.
\end{proof}

Combining Theorem \ref{thm:main} and Lemma \ref{lem:no}  gives the following result.

\begin{thm}\label{thm:2negcirc}
    Every regular dihedral $ETF(2n,n)$, $\Phi$, must be genuine projective. Furthermore, there exists a skew Hadamard matrix $H=[P,Q,-Q^\top,P^\top]$ with negacirculant $P,Q$, for which, up to switching equivalence, 
    $$ \Phi^*\Phi=\ I_{2n}+\frac{1}{\sqrt{2n-1}}\begin{bmatrix}
        \i(P-I_n) & Q\\ Q^\top & \i(-P+I_n)\end{bmatrix}. \Box$$
\end{thm}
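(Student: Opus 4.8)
The proof of Theorem \ref{thm:2negcirc} is almost entirely a matter of assembling results that have already been proved, so the plan is to combine them in the right order. First I would invoke Theorem \ref{thm:main}: a dihedral configuration $\Phi=SD_nv$ or $PD_nv$ is a regular $\etfn$ if and only if, up to switching equivalence, $\Phi^*\Phi$ has the form \eqref{eq:MH} for a skew Hadamard matrix $H=[P,Q,-Q^\top,P^\top]$ with $P,Q$ both circulant (in the strict case) or both negacirculant (in the projective case). Then I would rule out the strict case: by Lemma \ref{lem:no} there is no skew Hadamard matrix that is also 2-circulant, and a block matrix $[P,Q,-Q^\top,P^\top]$ with $P,Q$ circulant is in particular 2-circulant (the block $-Q^\top$ is circulant because the transpose of a circulant matrix is circulant, and likewise $P^\top$). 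Hence no strict regular dihedral $\etfn$ exists, so every regular dihedral $\etfn$ must be genuine projective. This gives the first assertion.

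For the second assertion, having established that $\Phi$ is genuine projective, Theorem \ref{thm:main} applied in the projective case yields a skew Hadamard matrix $H=[P,Q,-Q^\top,P^\top]$ with $P,Q$ negacirculant for which, up to switching equivalence,
\be
\Phi^*\Phi=\ I_{2n}+\frac{1}{\sqrt{2n-1}}\begin{bmatrix}
        \i(P-I_n) & Q\\ Q^\top & \i(P^\top-I_n)\end{bmatrix}.
\ee
The only remaining point is to rewrite the $(2,2)$ block: since $P$ is negacirculant, $P^\top$ is also negacirculant, and one checks from the defining relation $P+P^\top$ that $H+H^\top=2I_{2n}$ forces $P+P^\top=2I_n$ (the diagonal blocks of a skew Hadamard matrix satisfy $P+P^\top=2I_n$ and $R+R^\top=2I_n$, while the off-diagonal relation is automatic from the block shape $-Q^\top,Q$). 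Therefore $P^\top=2I_n-P$, and $\i(P^\top-I_n)=\i(I_n-P)=-\i(P-I_n)$, which is exactly the form claimed in the statement.

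I do not expect any genuine obstacle here; the theorem is a corollary of Theorem \ref{thm:main} and Lemma \ref{lem:no}. The one place that requires a small verification is the bookkeeping claim that a block matrix of the form $[P,Q,-Q^\top,P^\top]$ with $P,Q$ circulant genuinely falls under the hypothesis of Lemma \ref{lem:no} (i.e. that ``2-circulant'' as defined in Definition \ref{def:2circ} is satisfied), which just uses that the transpose of a circulant matrix is circulant; and, symmetrically, the identity $\i(P^\top-I_n)=-\i(P-I_n)$ for negacirculant skew Hadamard blocks, which uses $P+P^\top=2I_n$. Neither of these is subtle. An alternative, even shorter route to the second assertion is to note that $H'=[P,Q;-Q^\top,P^\top]$ being a 2-negacirculant skew Hadamard matrix is exactly the situation of the preceding theorem (with $R=P$ forced), so the substitution $P^\top=2I_n-P$ there gives the displayed Gram matrix directly; I would mention this but carry out the argument via Theorem \ref{thm:main} and Lemma \ref{lem:no} as above.

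\begin{proof}
By Theorem \ref{thm:main}, $\Phi$ is a regular $\etfn$ if and only if, up to switching equivalence, $\Phi^*\Phi$ has the form \eqref{eq:MH} for some skew Hadamard matrix $H=[P,Q,-Q^\top,P^\top]$ with $P,Q$ both circulant (when $\Phi$ is strict) or both negacirculant (when $\Phi$ is genuine projective). In the strict case the matrix $H=[P,Q,-Q^\top,P^\top]$ is $2$-circulant in the sense of Definition \ref{def:2circ}, because the transpose of a circulant matrix is again circulant, so all four blocks $P,Q,-Q^\top,P^\top$ are circulant. But Lemma \ref{lem:no} says no skew Hadamard matrix is $2$-circulant, a contradiction. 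Hence no strict regular dihedral $\etfn$ exists, and every regular dihedral $\etfn$ is genuine projective.

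It remains to bring the Gram matrix to the stated form. Since $\Phi$ is genuine projective and regular, Theorem \ref{thm:main} provides a skew Hadamard matrix $H=[P,Q,-Q^\top,P^\top]$ with $P,Q$ negacirculant such that, up to switching equivalence,
$$ \Phi^*\Phi=\ I_{2n}+\frac{1}{\sqrt{2n-1}}\begin{bmatrix}
        \i(P-I_n) & Q\\ Q^\top & \i(P^\top-I_n)\end{bmatrix}.$$
The skew Hadamard condition $H+H^\top=2I_{2n}$ applied to the top-left block gives $P+P^\top=2I_n$, so $P^\top=2I_n-P$ and therefore
$$\i(P^\top-I_n)=\i(2I_n-P-I_n)=\i(I_n-P)=-\i(P-I_n).$$
Substituting this into the $(2,2)$ block yields
$$ \Phi^*\Phi=\ I_{2n}+\frac{1}{\sqrt{2n-1}}\begin{bmatrix}
        \i(P-I_n) & Q\\ Q^\top & \i(-P+I_n)\end{bmatrix},$$
as claimed.
\end{proof}
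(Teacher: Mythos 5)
Your proposal is correct and follows exactly the route the paper intends: the theorem is stated as a direct consequence of Theorem \ref{thm:main} (characterization via structured skew Hadamard matrices) together with Lemma \ref{lem:no} (no $2$-circulant skew Hadamard matrix exists), which kills the strict case. Your added verification that $P+P^\top=2I_n$ forces $\i(P^\top-I_n)=-\i(P-I_n)$ is a correct filling-in of the one bookkeeping step the paper leaves implicit.
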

This result can be compared to our Theorem \ref{thm:abelian}, which implies the same thing for abelian $\etfn$. The dihedral case is considerably harder to prove, and follows from specific properties of the dihedral group. The following is an immediate corollary of the previous results given the restriction on Hadamard matrices.

\begin{cor}\label{cor:regular} Given a natural number $n$, the following statements hold. 
    \begin{itemize}
        \item[]
        \item[(a)] There are no regular dihedral $ETF(2n,n)$ for odd $n>1$.
        \item[(b)] There are no strict regular dihedral $ETF(2n,n)$.
    \end{itemize}   
\end{cor}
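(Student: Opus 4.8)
The plan is to derive both statements directly from Theorem \ref{thm:2negcirc} together with the arithmetic constraints on the order of a skew Hadamard matrix. By Theorem \ref{thm:2negcirc}, any regular dihedral $ETF(2n,n)$ must be genuine projective, and its existence is equivalent to the existence of a $2$-negacirculant skew Hadamard matrix $H=[P,Q,-Q^\top,P^\top]$ of order $2n$. So for both parts it suffices to rule out such a matrix.

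For part (a), I would argue as follows. A skew Hadamard matrix of order $m\ge 4$ can only exist when $4\mid m$, hence $4\mid 2n$, forcing $n$ to be even. Therefore for odd $n>1$ no skew Hadamard matrix of order $2n$ exists at all (the case $n=1$ gives $2n=2$, which is degenerate and excluded by the hypothesis $n>1$; also a $2\times2$ skew Hadamard would be $[1,1;-1,1]$, but there is no ETF in the relevant range). In particular no $2$-negacirculant one exists, so by Theorem \ref{thm:2negcirc} there is no regular dihedral $ETF(2n,n)$ for odd $n>1$. One subtlety to check: Lemma \ref{lem:proj} tells us that for odd $n$ every projective representation of $D_n$ is equivalent to a strict one, so ``genuine projective'' dihedral configurations do not even exist for odd $n$; this gives an alternative (purely representation-theoretic) route to part (a), and I would mention it as it makes the statement transparent — a regular dihedral $ETF(2n,n)$ for odd $n>1$ would have to be strict, but part (b) rules that out.

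For part (b), I would invoke Theorem \ref{thm:2negcirc} directly: it asserts that \emph{every} regular dihedral $ETF(2n,n)$ is genuine projective, so in particular none is strict. Equivalently, one can cite Lemma \ref{lem:no}: a strict regular dihedral $ETF(2n,n)$ would, by Theorem \ref{thm:main}, require a skew Hadamard matrix of the form $[P,Q,-Q^\top,P^\top]$ with $P,Q$ both circulant, i.e.\ a $2$-circulant skew Hadamard matrix, and Lemma \ref{lem:no} shows no such matrix exists. Either citation closes part (b) in one line.

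The main obstacle here is essentially bookkeeping rather than mathematics: the corollary is labelled ``immediate'' and indeed both parts are short consequences of results already proved. The one place requiring a little care is making sure the edge case $n=1$ (where $2n=2$ is not divisible by $4$ and the ETF framework degenerates) is handled by the hypothesis, and being explicit about which prior result — Theorem \ref{thm:2negcirc}, Theorem \ref{thm:main}, Lemma \ref{lem:no}, or Lemma \ref{lem:proj} — is doing the work in each part, since several of them overlap. I would present the proof as two short paragraphs, one per item, citing Theorem \ref{thm:2negcirc} for (b) and the divisibility-by-$4$ fact (plus Theorem \ref{thm:2negcirc}) for (a).
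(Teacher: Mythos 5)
Your proposal is correct and matches the paper's intent exactly: the corollary is stated there as an immediate consequence of Theorem \ref{thm:2negcirc} (equivalently, Theorem \ref{thm:main} plus Lemma \ref{lem:no} for part (b)) together with the fact that skew Hadamard matrices of order at least $4$ must have order divisible by $4$ (for part (a)). The paper gives no further proof, and your two-paragraph write-up, including the careful handling of the $n=1$ edge case and the alternative route through Lemma \ref{lem:proj}, is a faithful and complete expansion of that same argument.
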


\section{Paley and Double Paley $ETF$s as dihedral frames}\label{sec5}

In this section, we introduce the notion of Paley ETFs and use Lemma \ref{lem:red_aut} to show that all Paley $\etfn$ for $n$ even are genuine projective dihedral frames. In addition, we prove that the  doubling of the Paley $\etfn$ results also in a projective dihedral $\etf{4n}{2n}$. However, doubling again does not bequeath this property.

\subsection{Paley ETFs} Let $q$ be an odd prime power, and denote by $\FF_q$ the field of $q$ elements. Let $V=\FF_q^2$. The projective line over $\FF_q$ is the set $\PP^1_q:=(V\setminus \{0\})/\FF_q^\times$, of all nonzero vectors modulo proportionality. It satisfies $|\PP^1_q|=q+1$. We write each point in $\PP^1_q$ as $[a:b]$ where $(a,b)\neq (0,0)$ and where $[a:b]=[\lambda a:\lambda b]$ for all $\lambda\in \FF_q^\times$. When $q\equiv 3\mod 4$, Paley~\cite{Paley1933} constructed a skew Hadamard matrix $H_q$ of order $q+1$, from which we can construct an $\etf{q+1}{(q+1)/2}$ using Lemma \ref{lem:shm} (or Lemma \ref{lem:newshm}). This construction only defines the frame from the Gram matrix, hence it is only defined up to equivalence. An explicit construction of the vectors of this frame can be found in \cite{4373397,Xia2005AchievingTW,Ding2007AGC}. 

The construction of the Paley Gram matrix is as follows. Let $\langle,\rangle$ be the determinant form on $V$, that is $\langle (a,b),(c,d)\rangle:=ad-bc$. For each point $P\in \PP^1_q$ let us choose a representative vector $\hat P \in V\setminus\{0\}$, such as the vectors $(a,1)$ for each $a\in \FF_q$ and $(1,0)$. Let $\leg{\cdot}{q}$ denote the Legendre symbol over $q$, which explicitly means
$$ \leg{a}{q}:=\begin{cases}
    1 & \exists\ x\in \FF_q \text{ such that } a=x^2\\
    -1 & \text{otherwise}
\end{cases}.$$
Define the Paley Hadamard matrix as 
\begin{equation}
H_q\ := \leg{\langle \hat P,\hat Q\rangle}{q}_{P,Q\in \PP^1_q}.
\end{equation}
Then it has been proved by Paley that $H_q$ is a skew Hadamard matrix. The skewness is evident from the fact that $\langle,\rangle$ is an antisymmetric bilinear form and from the fact that $\leg{-1}{q}=-1$. The definition of $H_q$ depends on the ordering we choose on $\PP^1_q$, as well as on the liftings $\hat P$ for each $P$. Those ambiguities define $H_q$ up to conjugation by a real monomial matrix, and in turn the same for the Gram matrix $M(H_q)$ of the ETF, as defined in \eqref{eq:shm2etf}. Consequently, the Paley ETF is defined here only up to switching equivalence.\\

Our next step is to identify a large subgroup of switching automorphisms. Let $PGL(2,q)$ be the group of $2\times 2 $ matrices over $\FF_q$, modulo identifying matrices up to a scalar multiple. We denote by  $PSL(2,q)$ the subgroup of $PGL(2,q)$ consisting of matrices with determinant $1$,  modulo the normal subgroup of scalar matrices (which includes only $\pm I_2$). We note that   $[PGL(2,q):PSL(2,q)]=2$ and that $PSL(2,q)$ has order $|PSL(2,q)|=(q^3-q)/2$. The group $PGL(2,q)$ acts on $\PP^1_q$ by fractional linear transformations: $(a,b;c,d)[u:v]=[au+bv:cu+dv]$ in a well-defined manner. Each $g\in PGL(2,q)$ thus induces a permutation $S(g)$ on $\PP^1_q$. If we choose a lift of each $g$ to $\hat g\in GL(2,q)$, then $\hat g\hat P$ differs from $\widehat{gP}$ by a scalar $\lambda(g,P) \in\FF_q^\times$: $ \lambda(g,P)\cdot\hat g\hat P= \widehat{gP}$, and the multiplication by $\hat g$ defines a monomial matrix $\Pi(g):=S(g)\cdot D(g)$ where $D(g)=\diag\leg{\lambda(g,P)}{q}_P$.

 The following result describes some group homomorphisms needed in the construction of the Paley-type ETFs.

\begin{lemma}\label{lem:dihedprojrep}
    \begin{itemize}
        \item[(a)] The map $g\mapsto \Pi(g)$ is a projective monomial representation of $PGL(2,q)\to U(q+1)$. The induced map $PGL(2,q)\to \mon(q+1)/\{\pm I\}$ is a well-defined homomorphism.
        \item[(b)] For each $g\in PSL(2,q)$ we have $\Pi(g)H_q\Pi(g)^*=H_q$, which induces an embedding $PSL(2,q)\hookrightarrow \Aut(H_q)$.
        \item[(c)] For each $g\in PGL(2,q)\setminus PSL(2,q)$, $\Pi(g)H_q\Pi(g)^*=H_q^\top$.
    \end{itemize}
\end{lemma}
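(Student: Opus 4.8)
\textbf{Proof proposal for Lemma \ref{lem:dihedprojrep}.}

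The plan is to prove the three parts in order, since each builds on the previous one. For part (a), I would start from the cocycle relation for the lifting scalars. Choosing lifts $\hat g\in GL(2,q)$ for each $g\in PGL(2,q)$, the associativity of matrix multiplication on $V$ forces $\widehat{gh}\doteq \hat g\hat h$ in $GL(2,q)$, i.e. there is a scalar $c(g,h)\in\FF_q^\times$ with $\widehat{gh}=c(g,h)\,\hat g\hat h$. Combining this with the defining relation $\lambda(g,P)\hat g\hat P=\widehat{gP}$, a short computation gives $\lambda(gh,P)=c(g,h)\,\lambda(g,hP)\,\lambda(h,P)$ up to the needed bookkeeping. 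Applying the Legendre symbol (which is multiplicative) and using $\leg{c(g,h)}{q}^2=1$, the terms $\leg{c(g,h)}{q}$ contribute only an overall sign, so $D(gh)\doteq S(g)D(h)S(g)^{-1}D(g)$ up to $\pm1$; since $\Pi(g)=S(g)D(g)$ and $S$ is a genuine homomorphism into permutation matrices, one gets $\Pi(gh)\doteq \Pi(g)\Pi(h)$, which is exactly \eqref{equ:pi}. Hence $g\mapsto\Pi(g)$ is a projective monomial representation and descends to a genuine homomorphism $PGL(2,q)\to \mon(q+1)/\{\pm I\}$ (indeed one checks the sign ambiguity is killed in this quotient).

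For part (b), the key is the transformation law of the determinant form under $GL(2,q)$: $\langle \hat g u,\hat g w\rangle=\det(\hat g)\langle u,w\rangle$. Therefore
\[
\leg{\langle \widehat{gP},\widehat{gQ}\rangle}{q}
=\leg{\lambda(g,P)\lambda(g,Q)\det(\hat g)}{q}\,\leg{\langle\hat P,\hat Q\rangle}{q}
=\leg{\det(\hat g)}{q}\leg{\lambda(g,P)}{q}\leg{\lambda(g,Q)}{q}(H_q)_{P,Q}.
\]
Reading this as a matrix identity, $S(g)^{-1}H_qS(g)$ has $(P,Q)$ entry equal to $(H_q)_{gP,gQ}$, and the displayed formula says precisely that $\Pi(g)H_q\Pi(g)^* = \leg{\det(\hat g)}{q} H_q$ (the diagonal factors $D(g)$ on both sides supply the $\leg{\lambda(g,P)}{q}$ and $\leg{\lambda(g,Q)}{q}$ terms, and one must check $D(g)$ is real so $D(g)^*=D(g)^{-1}$, which holds since the Legendre symbol is $\pm1$). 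When $g\in PSL(2,q)$ we may take $\det(\hat g)=1$, so $\leg{\det \hat g}{q}=1$ and $\Pi(g)H_q\Pi(g)^*=H_q$. Combined with part (a), the map $g\mapsto\Pi(g)$ gives a homomorphism $PSL(2,q)\to \Aut(H_q)$; injectivity follows because $PSL(2,q)$ acts faithfully on $\PP^1_q$ (its action by fractional linear transformations has trivial kernel), so distinct $g$ give distinct permutation parts $S(g)$, hence distinct classes in $\mon(q+1)/\{\pm I\}$.

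For part (c), if $g\in PGL(2,q)\setminus PSL(2,q)$ then $\det(\hat g)$ is a non-square, so $\leg{\det\hat g}{q}=-1$, and the computation above gives $\Pi(g)H_q\Pi(g)^*=-H_q$. It remains to identify $-H_q$ with $H_q^\top$: since $\langle\cdot,\cdot\rangle$ is antisymmetric, $(H_q)_{Q,P}=\leg{\langle\hat Q,\hat P\rangle}{q}=\leg{-\langle\hat P,\hat Q\rangle}{q}=\leg{-1}{q}(H_q)_{P,Q}=-(H_q)_{P,Q}$ because $q\equiv 3\bmod 4$ makes $\leg{-1}{q}=-1$; hence $H_q^\top=-H_q$ and the claim follows. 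The main obstacle I anticipate is the careful bookkeeping of the scalars $\lambda(g,P)$ and $c(g,h)$ in part (a) — making sure all the Legendre-symbol identities are applied consistently and that the residual $\pm1$ ambiguities genuinely disappear in the quotient $\mon(q+1)/\{\pm I\}$ rather than obstructing the homomorphism property; parts (b) and (c) are then essentially immediate consequences of the determinant transformation law once (a) is in place.
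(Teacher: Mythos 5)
Your overall route is the same as the paper's: part (a) by tracking the scalars $\lambda(g,P)$ through the composition of lifts and observing that the Legendre symbol reduces the ambiguity to a global sign, and parts (b), (c) by the entrywise computation using $\langle \hat g u,\hat g w\rangle=\det(\hat g)\langle u,w\rangle$. Parts (a) and (b) are correct as written, and your injectivity argument (faithfulness of the $PSL(2,q)$-action on $\PP^1_q$) matches the paper's.

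There is, however, a genuine slip in how you package parts (b) and (c), coming from the diagonal of $H_q$. With the paper's convention $\leg{0}{q}=1$, the diagonal entries of $H_q$ are all $+1$ (that is why $H_q$ is a skew \emph{Hadamard} matrix, with $H_q+H_q^\top=2I$, rather than a skew-symmetric matrix). Consequently two of your intermediate claims are false on the diagonal: first, the matrix identity $\Pi(g)H_q\Pi(g)^*=\leg{\det \hat g}{q}H_q$ fails when $\det\hat g$ is a non-square, since the $(gP,gP)$ entry of the left side is $\leg{\lambda(g,P)}{q}^2\cdot 1=1$ while the right side gives $-1$; second, the identity $H_q^\top=-H_q$ is false for the same reason, and your derivation of it uses multiplicativity of the Legendre symbol at $0$, which does not hold under the paper's convention ($\leg{-1}{q}\leg{0}{q}=-1\neq 1=\leg{0}{q}$). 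These two errors cancel, so your final conclusion $\Pi(g)H_q\Pi(g)^*=H_q^\top$ is correct, but as written each step is wrong. The fix is exactly what the paper does: carry out the entrywise computation only for $P\neq Q$, where $\langle\hat P,\hat Q\rangle\neq 0$ and multiplicativity is legitimate, conclude that the off-diagonal entries are negated when $\det\hat g$ is a non-square, note separately that the diagonal entries are fixed, and then invoke skewness in the form $(H_q^\top)_{P,Q}=-(H_q)_{P,Q}$ for $P\neq Q$ together with $(H_q^\top)_{P,P}=(H_q)_{P,P}=1$.
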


\begin{proof} For each $P\in \PP^1_q$ and $g_1,g_2\in PGL(2,q)$ we have 
    $$
        \lambda(g_2,g_1P)\hat g_2 \lambda(g_1,P)\hat g_1 \hat P= \lambda(g_2,g_1P)\hat g_2 \widehat{g_1P}=\widehat{g_2g_1P}=\lambda(g_2g_1,P)\widehat{g_1g_2}\hat P.
    $$
    Now, $\hat g_2\hat g_1=\chi \widehat{g_2g_1}$ for a scalar $\chi$ independent of $P$. Comparing scalars we obtain $\lambda(g_2,g_1P) \lambda(g_1,P)= \chi \lambda(g_2g_1,P)$  for $\chi$ independent of $P$. Taking the Legendre symbol of both sides
    we arrive at the identity $D(g_2)_{g_1P}D(g_1)_P=\pm D(g_2g_1)_P$, where the sign is independent on $P$. This amounts further to the matrix identity $\Pi(g_2g_1)=\pm \Pi(g_2)\Pi(g_1)$, which proves (a).\\

    Now let $g\in PSL(2,q)$. The $(gP,gQ)$ entry of $\Pi(g)H_q\Pi(g)^*$ for $P\neq Q$is \be \label{eq:det=1} \leg{\lambda(g,P)}{q}\leg{\lambda(g,Q)}{q}^{-1}\leg{\left\langle \widehat{P},\widehat{Q}\right\rangle}{q}=\leg{\lambda(g,P)}{q}\leg{\lambda(g,Q)}{q}^{-1}\leg{\left\langle \hat g\hat P,\hat g\hat Q\right\rangle}{q}=\leg{\left\langle \widehat{gP},\widehat{gQ}\right\rangle}{q},\ee
    which shows that $\Pi(g)H_q\Pi(g)^*=H_q$. In the first equality, we used $\det(\hat g)=1$ and in the second the definition of $\lambda$. This gives a map $PSL(2,q)\to \Aut(H_q)$. It remains to show the injectivity of this map. If $g\neq 1$ then $S(g)$ is the nonzero map on $\PP^1_q$ which shows that $\Pi(g)$ is not scalar, hence injectivity and the proof of (b) is complete.\\
    
    The proof of (c) is done by a similar chain of equalities as \eqref{eq:det=1}. The only difference is that now $\det(\hat g)=-1$ (or any other non-square), which negates the sign of the determinant form when $P\neq Q$, so we obtain on the right hand side $-\leg{\left\langle \widehat{gP},\widehat{gQ}\right\rangle}{q}$. Using the skewness of $H_q$, we prove (c).
\end{proof}

\begin{remark}
    While $PSL(2,q)$ embeds into $\Aut(H_q)$, it is not the full automorphism group. There are automorphisms coming from the Galois action on $\FF_q$, and adjoining them generates the full automorphism group, see \cite{DELAUNEY20082910,Kantor1969AutomorphismGO}. The notion of an automorphism is slightly different there, but in our specific case, all automorphisms turn out to satisfy the definition here.
\end{remark}

Our next goal is to show that Paley ETFs are projective dihedral. We will accomplish this via Lemma \ref{lem:red_aut}, by finding an isomorphic copy of $D_n$ inside $\Aut(H_q)$. Let $\FF_{q^2}=\FF_q(\sqrt{-1})$ be the quadratic extension of $\FF_q$, also viewed as a 2-dimensional vector space over $\FF_q$ with basis $1,\sqrt{-1}$. The reader should be cautioned that we distinguish this modular $\sqrt{-1}$ from the complex $\i$. The multiplication operator by $a+b\sqrt{-1}\in \FF_{q^2}$ is represented with respect to this basis by the matrix $[a,b;-b,a]\in GL(2,q)$. This gives a group homomorphism $\FF_{q^2}^\times\to GL(2,q)$. There is a subgroup $S\subset \FF_{q^2}^\times$ consisting of all elements $a+b\sqrt{-1}$ such that $a^2+b^2=1$. This reduces to a map $S\to SL(2,q)$, and the only element in $S$ mapping to a scalar matrix is $-1$. Thus we have a well-defined homomorphism
$$ \beta: S/\langle -1 \rangle \to PSL(2,q), \ \ \ a+b\sqrt{-1}\mapsto \begin{bmatrix}
    a & b\\-b & a
\end{bmatrix}.$$
This homomorphism is easily seen to be an embedding. The cardinality of $S$ is $q+1$, as it is the kernel of the determinant (norm) map from $\FF_{q^2}^\times \to \FF_q^\times$. The group $S/\langle -1 \rangle$ has cardinality $(q+1)/2$. This gives us a copy of half of a dihedral subgroup in $PSL(2,q)$. Let $m\in PSL(2,q)$ be the image of a generator of $S$. Next, consider a matrix $t=[\alpha,\beta;\beta,-\alpha]\in PSL(2,q)$ such that $\alpha^2+\beta^2=-1$ (in a finite field every element is a sum of 2 squares). Conjugating $[a,b;-b,a]$ by $t$ acts as complex conjugation, which is the same as the inverse. thus we have the following result. 

\begin{lemma}\label{lem:mt}
 The order of the element $m\in PSL(2,q)$ is  $(q+1)/2$, while the element $t$ has order $2$. Furthermore, we have  $tmt^{-1}=m^{-1}$ in $PSL(2,q)$.
\end{lemma}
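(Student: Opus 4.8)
The plan is to verify the three assertions — the order of $m$, the order of $t$, and the conjugation relation — by working inside $GL(2,q)$ with the cyclic structure of $S$ and the explicit matrices defining $\beta$ and $t$, then pushing down to $PSL(2,q)$.

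First I would pin down the order of $m$. The group $S=\{z\in\FF_{q^2}^\times : z\bar z=1\}$ is a finite subgroup of the multiplicative group of a field, hence cyclic, and, as recorded in the excerpt, $|S|=q+1$. Since $q\equiv 3\pmod 4$ throughout the Paley construction, $q+1$ is divisible by $4$; in particular it is even, so the cyclic group $S$ has a unique element of order $2$, namely $-1=-1+0\sqrt{-1}$, which indeed lies in $S$. Therefore $\langle -1\rangle$ is exactly the kernel of $S\to PSL(2,q)$, the quotient $S/\langle -1\rangle$ is cyclic of order $(q+1)/2$, and the image of a generator of $S$ generates it. Because $\beta$ is injective, $m$ has order $(q+1)/2$.

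Next I would treat $t=[\alpha,\beta;\beta,-\alpha]$ with $\alpha^2+\beta^2=-1$. A one-line computation gives $t^2=(\alpha^2+\beta^2)I_2=-I_2$, which is the identity of $PSL(2,q)$, while $t$ is not scalar: a scalar matrix of this shape forces $\beta=0$ and $\alpha=-\alpha$, hence $\alpha=\beta=0$, contradicting $\alpha^2+\beta^2=-1$ in odd characteristic. Since also $\det t=-(\alpha^2+\beta^2)=1$, we have $t\in PSL(2,q)$ and $t$ has order exactly $2$. For the relation $tmt^{-1}=m^{-1}$, I would write $m$ as the matrix $[a,b;-b,a]$ of multiplication by $g=a+b\sqrt{-1}\in S$, so $a^2+b^2=1$. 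Because $\det t=1$, the inverse is $t^{-1}=\mathrm{adj}(t)=-t$, so $tmt^{-1}=-\,tmt$; carrying out the $2\times 2$ product and using $\alpha^2+\beta^2=-1$ collapses everything to $tmt=-[a,-b;b,a]$, whence $tmt^{-1}=[a,-b;b,a]$. This last matrix is the multiplication operator by $a-b\sqrt{-1}=\bar g$, and since $g$ has norm $1$ we have $\bar g=g^{-1}$, so $tmt^{-1}=m^{-1}$ already in $GL(2,q)$, a fortiori in $PSL(2,q)$.

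There is no genuine obstacle here; the only point requiring care is the bookkeeping between $GL(2,q)$, $SL(2,q)$, and $PSL(2,q)$ together with the sign conventions in passing from multiplication operators to the matrices $m,t$. If one prefers to bypass the brute computation in the last step, one can argue conceptually: $t$ represents, up to a scalar, the $\FF_q$-linear map $z\mapsto w\bar z$ of $\FF_{q^2}$ for a suitable $w$, and conjugating the multiplication-by-$g$ operator by such a map yields multiplication by $\bar g$, which on the norm-$1$ subgroup $S$ is precisely inversion; either route is short.
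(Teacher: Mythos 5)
Your proposal is correct and follows the same route as the paper: the order of $m$ comes from the injectivity of $\beta$ together with $|S/\langle-1\rangle|=(q+1)/2$, the order of $t$ from the direct computation $t^2=(\alpha^2+\beta^2)I_2=-I_2$, and the relation $tmt^{-1}=m^{-1}$ from the fact that conjugation by $t$ realizes complex conjugation on $\FF_{q^2}$, which is inversion on the norm-one subgroup $S$. You have simply written out the computations that the paper leaves as ``easily checked.''
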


\begin{proof}
    The order of $m$ is clear from the injectivity of $\beta$. The other statements are easily checked.
\end{proof}
Therefore we see a copy of $D_n$ inside $PSL(2,q)$. Let us denote this subgroup by $\Delta_n:=\langle \{m,t\}\rangle$. 
\begin{remark}
    Let $\mathbf m,\mathbf t\in SL(2,q)$ be matrices above $m,t$. They satisfy $\mathbf m^n=\mathbf t^2=-I_2$ and $\mathbf{tmt}^{-1}=\mathbf{m}^{-1}$.
\end{remark}

We need the following technical result. 

\begin{lemma}\label{lem:free}
    The action of $\Delta_n$ on $\PP^1_q$ is free. 
\end{lemma}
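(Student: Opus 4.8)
The plan is to show that no nontrivial element of $\Delta_n = \langle m, t\rangle \cong D_n$ fixes a point of $\PP^1_q$. Since every element of $D_n$ is either a power $m^k$ of the rotation or a reflection $m^k t$, I will treat these two cases separately. For the rotation part, recall that $m$ is the image in $PSL(2,q)$ of a generator of the norm-one subgroup $S \subset \FF_{q^2}^\times$, so a lift $\mathbf m \in SL(2,q)$ is the matrix $[a,b;-b,a]$ of multiplication by a unit $a + b\sqrt{-1}$ of multiplicative order $q+1$. The key observation is that the eigenvalues of $\mathbf m$ over $\FF_{q^2}$ are exactly $a \pm b\sqrt{-1} = \zeta^{\pm 1}$ where $\zeta \in \FF_{q^2}^\times$ is a generator of $S$; these lie in $\FF_{q^2}\setminus\FF_q$ precisely because $\zeta$ has order $q+1 \nmid q-1$, so $\mathbf m$ has no eigenvector defined over $\FF_q$. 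Hence no power $\mathbf m^k$ with $\mathbf m^k \neq \pm I_2$ can fix a point of $\PP^1_q$ either, because $\mathbf m^k$ has eigenvalues $\zeta^{\pm k}$ and if $\zeta^k \neq \zeta^{-k}$ these are again a non-$\FF_q$-rational conjugate pair (no fixed point), while if $\zeta^k = \zeta^{-k}$ then $\zeta^{2k}=1$, forcing $\zeta^k = \pm 1$ since $\zeta$ has even order $q+1$, i.e. $\mathbf m^k = \pm I_2$. So the only powers of $m$ fixing anything are the trivial ones.

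For the reflection part, a lift of $m^k t$ to $SL(2,q)$ is a matrix $\mathbf g = \mathbf m^k \mathbf t$ with $\det \mathbf g = (\det\mathbf m)^k \det\mathbf t = 1\cdot 1 = 1$ but trace zero: indeed $\mathbf t = [\alpha,\beta;\beta,-\alpha]$ has trace $0$, and multiplying by $\mathbf m^k = [c,d;-d,c]$ gives $\mathbf m^k\mathbf t = [c\alpha+d\beta,\ c\beta-d\alpha;\ -d\alpha+c\beta,\ -d\beta-c\alpha]$... wait, I should just note the cleaner route: $\mathbf t \mathbf m^k \mathbf t^{-1} = \mathbf m^{-k}$ by Lemma \ref{lem:mt}, so $(\mathbf m^k\mathbf t)^2 = \mathbf m^k (\mathbf t \mathbf m^k \mathbf t) = \mathbf m^k \mathbf m^{-k}\mathbf t^2 = \mathbf t^2 = -I_2$. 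Thus $\mathbf g^2 = -I_2$, which forces the characteristic polynomial of $\mathbf g$ to be $x^2 + 1$ (it has $\det \mathbf g = 1$ and satisfies $\mathbf g^2 = -I$, and $\mathbf g \neq \pm\sqrt{-1}\, I$ is irrelevant since scalar $\sqrt{-1}$ need not be in $\FF_q$; in all cases the minimal polynomial divides $x^2+1$ and $\mathbf g$ is not scalar, so the characteristic polynomial is $x^2+1$). The eigenvalues of $\mathbf g$ are therefore $\pm\sqrt{-1} \in \FF_{q^2}$, and these lie in $\FF_q$ if and only if $q \equiv 1 \bmod 4$. But in our setting $q \equiv 3 \bmod 4$ (this is the hypothesis under which the Paley skew Hadamard matrix $H_q$ exists), so $\sqrt{-1}\notin\FF_q$, the eigenvalues are a genuine conjugate pair outside $\FF_q$, and $\mathbf g$ has no $\FF_q$-rational eigenvector — hence $m^k t$ has no fixed point in $\PP^1_q$.

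Putting the two cases together, every nontrivial element of $\Delta_n$ acts without fixed points on $\PP^1_q$, i.e. the action is free. The main point to get right is the bookkeeping with lifts from $PSL(2,q)$ to $SL(2,q)$ — a fixed point of $g\in PSL(2,q)$ on $\PP^1_q$ corresponds exactly to an eigenvector (over $\FF_q$) of any lift $\mathbf g\in SL(2,q)$ — and the use of $q\equiv 3\bmod 4$ to guarantee $\sqrt{-1}\notin\FF_q$, which is the only place the congruence condition enters. Everything else is a short eigenvalue computation.
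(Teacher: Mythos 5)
Your proof is correct, but it takes a genuinely different route from the paper's. The paper exploits the numerical coincidence $|\Delta_n| = 2n = q+1 = |\PP^1_q|$: it computes the stabilizer of the single point $[0:1]$ (a matrix fixes $[0:1]$ iff it is lower triangular, and the only lower-triangular matrices of the shapes $[x,y;-y,x]$ or $[x,y;y,-x]$ occurring in $\Delta_n$ are $I_2$ and $[1,0;0,-1]$, the latter excluded because its determinant $-1$ is a non-residue), and then freeness follows because a trivial stabilizer at one point forces the orbit to have size $q+1$, hence transitivity, hence triviality of all (conjugate) stabilizers. You instead verify element by element that every nontrivial rotation $m^k$ and every reflection $m^k t$ is fixed-point-free, by showing that the eigenvalues of a lift to $SL(2,q)$ ($\zeta^{\pm k}$ for rotations, $\pm\sqrt{-1}$ for reflections via $(\mathbf m^k\mathbf t)^2=-I_2$) lie outside $\FF_q$. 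Your argument is longer but more self-contained: it does not rely on the orbit-counting step (which the paper leaves implicit in the phrase ``it is sufficient''), and it isolates exactly where the hypothesis $q\equiv 3\bmod 4$ enters, namely in $\sqrt{-1}\notin\FF_q$. The paper's argument is shorter and avoids any eigenvalue computation, at the cost of needing the transitivity observation to upgrade one trivial stabilizer to freeness. Both are sound; the small digression in your reflection case (``wait, I should just note the cleaner route'') should be cleaned up, but the relation $(\mathbf m^k\mathbf t)^2=\mathbf t^2=-I_2$ you settle on is exactly right given the remark following Lemma \ref{lem:mt}.
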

\begin{proof}
    It is sufficient to show that the stabilizer of $[0:1]\in \PP^1_q$ in $\Delta_n$ is trivial. A matrix in $PSL(2,q)$ is in the stabilizer if and only if it is lower triangular. Every matrix in $\Delta_n$ is of the form $[x,y;-y,x]$ or $[x,y;y,-x]$ and it is lower triangular if and only if it is equal to $[1,0;0,1]=I_2$ or $[1,0;0,-1]$. The latter case has determinant $-1$ which is not a quadratic residue, hence impossible.  
\end{proof}
We can now state and prove the following projectivity result. Note that, provided that we have proved that this is a reglar dihedral frame, the projectivity already follows from the previous section. Theorem \ref{thm:main} implies that the Paley ETF is regular, as it is concocted from a dihedral skew Hadamard matrix. Then Corollary \ref{cor:regular}(b) shows that it is genuine projective. Nevertheless, we give now a specialized proof.
\begin{thm}\label{thm:paley}
    When $q$ is prime and $q\equiv 3\mod 4$, the Paley $\etf{q+1}{(q+1)/2}$ is a genuine projective dihedral frame. 
\end{thm}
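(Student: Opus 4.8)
The plan is to obtain dihedrality of the Paley ETF $\Phi$ from Lemma~\ref{lem:red_aut}, and then to show separately that the resulting projective representation of $D_n$, $n=(q+1)/2$, is genuinely projective. For the first step I would combine the ingredients already assembled in this section: by Lemma~\ref{lem:mt} the subgroup $\Delta_n=\langle m,t\rangle\le PSL(2,q)$ is isomorphic to $D_n$; by Lemma~\ref{lem:dihedprojrep}(b) the map $g\mapsto\Pi(g)$ embeds $PSL(2,q)$, and hence $\Delta_n$, into $\Aut(H_q)$; and by Remark~\ref{rem1}(b) this gives an embedding $\iota\colon D_n\hookrightarrow\Aut(M(H_q))=\Aut(\Phi^*\Phi)$. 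By Lemma~\ref{lem:free} the action of $\Delta_n$ on the index set $\PP^1_q$ is free, and since $|\PP^1_q|=q+1=2n=|\Delta_n|$ a free action on a set of that size has a single orbit, hence is transitive. Moreover $\Phi$ is reduced: its columns are unit vectors whose pairwise inner products have modulus $1/\sqrt{2n-1}<1$, so no two of them are proportional. Lemma~\ref{lem:red_aut} then yields $\Phi\sim\overline{D_nv}$ for some fiducial $v$, and since $\Phi$ has $2n$ pairwise non-proportional columns we get $\overline{D_nv}=D_nv$; thus $\Phi$ is a dihedral configuration, associated to some projective representation $\pi\colon D_n\to U(n)$.

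For genuine projectivity, the crucial observation is that the recipe $g\mapsto\Pi(g)$ is an \emph{honest} group homomorphism $\widehat\Pi\colon GL(2,q)\to\mon(q+1)$ once one works with actual matrices rather than with classes in $PGL(2,q)$: this is exactly the cocycle-free relation $\lambda(g_2g_1,P)=\lambda(g_1,P)\,\lambda(g_2,g_1P)$ underlying the proof of Lemma~\ref{lem:dihedprojrep}(a), the projectivity there arising solely from choosing lifts of elements of $PGL(2,q)$. Let $\widetilde\Delta_n:=\langle\mathbf m,\mathbf t\rangle\le SL(2,q)$ be the dicyclic group of order $4n$ above $\Delta_n$, so that $\mathbf m^n=\mathbf t^2=-I_2$ and $\mathbf t\mathbf m\mathbf t^{-1}=\mathbf m^{-1}$ by the Remark following Lemma~\ref{lem:mt}. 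Since $q\equiv 3\pmod 4$, $\leg{-1}{q}=-1$, so $\widehat\Pi(-I_2)=\leg{-1}{q}I_{q+1}=-I_{q+1}$. For $g\in\widetilde\Delta_n$ lying above $\bar g\in\Delta_n$, the matrix $\widehat\Pi(g)$ equals $\Pi(\bar g)$ up to a sign, hence conjugates $M(H_q)=\Phi^*\Phi$ to itself and therefore preserves the $n$-dimensional subspace $W\subseteq\CC^{2n}$ spanned by the columns of $\Phi^*$. Restricting $\widehat\Pi|_{\widetilde\Delta_n}$ to $W$ and transporting it along the isometry $\tfrac1{\sqrt2}\Phi^*\colon\CC^n\xrightarrow{\ \sim\ }W$ produces an honest unitary representation $\sigma\colon\widetilde\Delta_n\to U(n)$ with $\sigma(-I_2)=-I_n$, which, modulo scalars, is $\pi$ pulled back along the quotient $\operatorname{pr}\colon\widetilde\Delta_n\twoheadrightarrow\widetilde\Delta_n/\langle-I_2\rangle\cong D_n$.

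It then remains to argue that such a $\pi$ cannot be equivalent to a strict representation. Suppose it were, say to $\pi'\colon D_n\to U(n)$. After a unitary change of basis, $\sigma$ and $\pi'\circ\operatorname{pr}$ are honest representations of $\widetilde\Delta_n$ that agree up to scalars, and since the matrices $\pi'(\operatorname{pr}(g))$ are invertible, the scalars form a character $\chi\colon\widetilde\Delta_n\to S^1$ with $\sigma=\chi\cdot(\pi'\circ\operatorname{pr})$. Evaluating at $-I_2$ gives $-I_n=\sigma(-I_2)=\chi(-I_2)\,\pi'(\operatorname{pr}(-I_2))=\chi(-I_2)\,\pi'(1)=\chi(-I_2)\,I_n$, so $\chi(-I_2)=-1$. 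On the other hand, $-I_2=\mathbf m^n$ and the relation $\mathbf t\mathbf m\mathbf t^{-1}=\mathbf m^{-1}$ forces $\mathbf m^2$ into the commutator subgroup of $\widetilde\Delta_n$, so $\chi(\mathbf m)^2=1$; as $n=(q+1)/2$ is even because $q\equiv 3\pmod4$, this gives $\chi(-I_2)=\chi(\mathbf m)^n=1$, a contradiction. Hence $\pi$ is genuinely projective. I expect the main obstacle to be the middle paragraph --- namely verifying carefully that $\widehat\Pi$ is multiplicative on all of $GL(2,q)$, that it sends $-I_2$ to $-I$, and that its restriction to $W$, transported to $\CC^n$, really is a linearization of the specific representation $\pi$ coming out of Lemma~\ref{lem:red_aut} --- rather than the brief cohomological contradiction at the end.
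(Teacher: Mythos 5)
Your proposal is correct. The first half (dihedrality via Lemma \ref{lem:red_aut}, using the embedding $\Delta_n\hookrightarrow\Aut(H_q)$ from Lemma \ref{lem:dihedprojrep}(b), freeness from Lemma \ref{lem:free} giving transitivity, and reducedness from the off-diagonal moduli) is essentially identical to the paper's argument. The second half takes a genuinely different route to the same conclusion. The paper exploits the freedom in choosing the representatives $\hat P$: setting $\widehat{m^iP_0}:=\mathbf m^i[0,1]^\top$ and $\widehat{m^itP_0}:=\mathbf m^i\mathbf t[0,1]^\top$, it normalizes the monomial matrices to $M_2=\Pi(m)$, $T_2=\i\Pi(t)$ satisfying $M_2^n=-I$, $T_2^2=I$, $T_2M_2T_2^{-1}=M_2^{-1}$, and then simply cites the converse direction of Lemma \ref{lem:proj} (these relations cannot hold for a representation equivalent to a strict one when $n$ is even). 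You instead pass to the dicyclic double cover $\widetilde\Delta_n\subset SL(2,q)$, use the honest multiplicativity of $\widehat\Pi$ together with $\leg{-1}{q}=-1$ to get $\widehat\Pi(-I_2)=-I$, linearize on the column space of $\Phi^*$ to obtain $\sigma$ with $\sigma(-I_2)=-I_n$, and rule out strictness by the character computation $\chi(-I_2)=-1$ versus $\chi(\mathbf m^n)=\chi(\mathbf m)^n=1$ (using $\mathbf m^2\in[\widetilde\Delta_n,\widetilde\Delta_n]$ and $n$ even). Both arguments ultimately detect the same nontrivial class in $H^2(D_n;\CC^\times)$, and both use $q\equiv 3\bmod 4$ in an essential way (you via $\leg{-1}{q}=-1$, the paper via $n=(q+1)/2$ being even in Lemma \ref{lem:proj}); your route costs the extra verification that $\sigma$ is an honest homomorphism intertwined with $\pi$ (which does go through, since $\Phi\Phi^*=2I_n$ and $\pi(g)\Phi=\Phi\,\hat\iota(g)$), but buys independence from the specific normal form of Lemma \ref{lem:proj} and makes the role of the double cover in $SL(2,q)$ explicit.
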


\begin{proof}
    The automorphism group of the Gram matrix $F:=I+\tfrac{\i}{\sqrt{2n-1}}(H_q-I)$ contains a copy of the dihedral group. If $\Phi^*\Phi=F$ then $\Phi$ must be reduced, since its off-diagonal entries have different moduli than its diagonal ones. By the freeness of the action of $\Delta_n$, Lemma \ref{lem:red_aut} implies that $\Phi$ is switching equivalent to some $\overline{D_nv}$, $n=(q+1)/2$, via the representation $\Pi|_{\Delta_n}$. It remains to show that it is a genuine projective frame. We will show that the representation $\Pi|_{\Delta_n}:\Delta_n\to U(2n)$ of Lemma \ref{lem:dihedprojrep} is not equivalent to a strict representation. We are allowed to modify the choice of the representatives $\hat P\in \PP^1_q$. Consider the point $P_0=[0:1]\in \PP^1_q$ and let $\widehat{P_0}=[0,1]$. By Lemma  \ref{lem:free} the set $\{m^iP_0,m^itP_0\}_{1\le i\le n}=\PP^1_q$, and we may define $\widehat{m^iP_0}:=\mathbf m^i[0,1]^\top$ and $\widehat{m^itP_0}:=\mathbf{m}^i\mathbf{t}[0,1]^\top$. This modifies $\Pi$ by phase monomial equivalence. Let $M_2:=\Pi(m)$ and $T_2:=\i\Pi(t)$. By the definition of $\Pi$ and our choice of the representatives, we have $M_2^n=-I_n$, $T_2^2=I_n$ and $T_2M_2T_2^{-1}=M_2^{-1}$, which is exactly the prescription to be a projective dihedral representation, as per Lemma \ref{lem:proj}.
\end{proof}

\subsection{Double Paley ETFs}
In this subsection we continue to assume that $q$ is a prime power congruent to $3$ modulo 4.
Using the doubling construction \eqref{equ:double}, the matrix $$DH_q=\begin{bmatrix}
        H_q & H_q\\ -H_q^\top & H_q^\top
    \end{bmatrix}$$ is a skew Hadamard matrix, leading to an $\etf{2q+2}{q+1}$ constructed by equation \eqref{eq:shm2etf}, which we term as \emph{double Paley} ETF. Our goal is to show that this configuration is again genuine projective dihedral. Recall the projective representation $\Pi:PGL(2,q)\to U(q+1)$. We will construct a new representation $D\Pi:PGL(2,q)\to U(2q+2)$ as follows: 
    \be \label{eq:dpi} D\Pi(g) =\begin{cases}
        [\Pi(g),0;0,\Pi(g)] & g\in PSL(2,q)\\
        [0,-\Pi(g);\Pi(g),0] & g\in PGL(2,q)\setminus PSL(2,q)
    \end{cases}.\ee

It is readily seen that $D\Pi$ is a projective representation. This is a general construction which is in fact the tensor product of $\Pi$ with a two-dimensional projective representation of $PGL(2,q)/PSL(2,q)\cong \ZZ/2\ZZ$. Moreover, $\overline{D\Pi}:PGL(2,q)\to U(2q+2)/\{\pm I\}$ is a group homomorphism. We claim
\begin{lemma}\label{lem:dpirep}
    for all $g\in PGL(2,q)$, $D\Pi(g)\cdot DH_q\cdot D\Pi(g)^*=DH_q$.
\end{lemma}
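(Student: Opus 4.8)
The plan is to prove the identity by a direct block computation, splitting into the two cases $g\in PSL(2,q)$ and $g\in PGL(2,q)\setminus PSL(2,q)$ that govern the definition \eqref{eq:dpi} of $D\Pi$, and in each case reducing the matrix identity to the transport relations already established in Lemma \ref{lem:dihedprojrep}. First I would record the small preliminary facts that make the bookkeeping go through: each monomial matrix $\Pi(g)$ has entries in $\{0,\pm1\}$ (the off-diagonal values are Legendre symbols), hence it is real orthogonal, so $\Pi(g)^*=\Pi(g)^\top=\Pi(g)^{-1}$ and conjugation commutes with transposition. Transposing the two identities of Lemma \ref{lem:dihedprojrep}(b),(c) then produces the companion relations $\Pi(g)H_q^\top\Pi(g)^*=H_q^\top$ for $g\in PSL(2,q)$ and $\Pi(g)H_q^\top\Pi(g)^*=H_q$ for $g\in PGL(2,q)\setminus PSL(2,q)$. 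I would also note that, although $D\Pi$ is only a projective representation, so that $D\Pi(g)$ is defined only up to sign, the map $X\mapsto D\Pi(g)\,X\,D\Pi(g)^*$ does not see that sign, so the statement is well posed.

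For $g\in PSL(2,q)$ we have $D\Pi(g)=\mathrm{diag}(\Pi(g),\Pi(g))$, so conjugating $DH_q=[H_q,H_q;-H_q^\top,H_q^\top]$ simply replaces each $n\times n$ block $X$ by $\Pi(g)X\Pi(g)^*$; applying Lemma \ref{lem:dihedprojrep}(b) to the blocks involving $H_q$ and its transpose to the blocks involving $H_q^\top$ returns $DH_q$ verbatim. For $g\in PGL(2,q)\setminus PSL(2,q)$, $D\Pi(g)=[0,-\Pi(g);\Pi(g),0]$ is anti-block-diagonal with $D\Pi(g)^*=[0,\Pi(g)^*;-\Pi(g)^*,0]$; multiplying out $D\Pi(g)\,DH_q\,D\Pi(g)^*$ gives a $2\times 2$ block matrix whose entries are $\pm\Pi(g)H_q\Pi(g)^*$ and $\pm\Pi(g)H_q^\top\Pi(g)^*$, arranged precisely so that Lemma \ref{lem:dihedprojrep}(c), which gives $\Pi(g)H_q\Pi(g)^*=H_q^\top$, and its transpose, which gives $\Pi(g)H_q^\top\Pi(g)^*=H_q$, restore $[H_q,H_q;-H_q^\top,H_q^\top]=DH_q$. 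The interchange of $H_q$ with $H_q^\top$ forced by the off-diagonal blocks of $D\Pi(g)$ is exactly compensated by the fact that on $PGL\setminus PSL$ the transport identity swaps $H_q$ and $H_q^\top$.

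I do not expect a genuine obstacle here: all the content sits in Lemma \ref{lem:dihedprojrep}, and what remains is purely formal. The only points requiring care are tracking the two minus signs — the one in the anti-diagonal block of $D\Pi(g)$ and the one in the $(2,1)$ block of $DH_q$ — through the second case, and making sure the swap $H_q\leftrightarrow H_q^\top$ is consistently applied; everything else is a routine $2\times 2$ block multiplication.
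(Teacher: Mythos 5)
Your proof is correct, and it reaches the same destination by a slightly different, and in places more self-contained, route. The paper also splits into the cases $g\in PSL(2,q)$ and $g\notin PSL(2,q)$ and reduces everything to Lemma \ref{lem:dihedprojrep}, but it handles the $H_q^\top$ blocks by introducing the auxiliary element $\rho=\diag(-1,1)$, writing $H_q^\top=SH_qS^*$ with $S=\Pi(\rho)$, and invoking the projective homomorphism property $\Pi(g)\Pi(\rho)\doteq\Pi(g\rho)$; for $g\notin PSL(2,q)$ it then argues that it suffices to check the single element $g=\rho$ and leaves that verification to the reader. You instead observe that each $\Pi(g)$ is a real signed permutation matrix, so $\Pi(g)^*=\Pi(g)^\top$, and obtain the companion identities $\Pi(g)H_q^\top\Pi(g)^*=H_q^\top$ (resp.\ $=H_q$) simply by transposing Lemma \ref{lem:dihedprojrep}(b) (resp.\ (c)); you then carry out the $2\times 2$ block multiplication for an arbitrary $g\notin PSL(2,q)$ rather than reducing to a representative. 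The computation checks out: for $g\notin PSL(2,q)$ one gets $\bigl[\Pi H_q^\top\Pi^*,\ \Pi H_q^\top\Pi^*;\ -\Pi H_q\Pi^*,\ \Pi H_q\Pi^*\bigr]$, which Lemma \ref{lem:dihedprojrep}(c) and its transpose turn back into $DH_q$. What your version buys is that it avoids both the auxiliary element and the "details left to the reader" step, at the cost of needing the (easy but necessary) observation that $\Pi(g)$ is real; your remark that conjugation is insensitive to the sign ambiguity in the projective representation is also a point the paper passes over silently.
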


\begin{proof}
    Let $\rho\in PGL(2,q)\setminus PSL(2,q)$  be the matrix $\diag(-1,1)$, and set $S=\Pi(\rho)$. Then by Lemma \ref{lem:dihedprojrep}(c) $SH_qS^*=H_q^\top$ and $DH_q=[H_q,H_q;-SH_qS^*,SH_qS^*]$. Let $g\in PGL(2,q)$ be any element. If $g\in PSL(2,q)$, then Lemma \ref{lem:dihedprojrep} implies that $\Pi(g)H_q\Pi(g)=H_q$ and $\Pi(g)SH_qS^*\Pi(g)^*=\Pi(g\rho)H_q\Pi(g\rho)^*=H_q^\top=SH_qS^\top$. This shows that $D\Pi(g)\cdot DH_q \cdot D\Pi(g)^*=DH_q$. To prove the claim for $g\notin PSL(2,q)$, it suffices to take a specific element, say $g=\rho$. Then we use $S(\rho)^2=\pm I$ to check directly that $D\Pi(\rho)\cdot DH_q\cdot D\Pi(\rho)^*=DH_q$. The details are left to the reader.
\end{proof}

To establish the dihedrality of $DH_q$ we need to find a copy of $D_{2n}$ in $PGL(2,q)$, $n=q+1$. We use again the embedding of $\FF_{q^2}^\times$ in $GL(2,q)$, but this time we do not need to worry about the determinant. Let $g=a+b\sqrt{-1}$ be a generator of this group, which is an element of order $q^2-1$. It corresponds, as discussed above, to the matrix $[a,b;-b,a]$, but this time its image in $\FF_{q^2}^\times/\FF_q^\times$ is of order $q+1$, and accordingly the element $m_1\in PGL(2,q)$ corresponding to that matrix has order $q+1$. This identifies the cyclic part of $D_{2n}$. Let $t_1=t$ be the same matrix from Lemma \ref{lem:mt}. Then as above $m_1,t_1$ satisfy the relations of the Dihedral group, and yield a copy $\Delta_{2n}:=\langle \{m_1,t_1\}\rangle\subset PGL(2,q)$ of $D_{2n}$. Similar to Paley ETFs, we have,
\begin{thm}\label{thm:paley_d}
    The double Paley ETF is a genuine projective dihedral frame through the projective representation $D\Pi|_{\Delta_{2n}}$.
\end{thm}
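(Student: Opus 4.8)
The plan is to mirror the proof of Theorem~\ref{thm:paley}, replacing the ingredients for $\Pi|_{\Delta_n}$ with their doubled analogues. First I would establish the purely formal facts about $D\Pi$: that it is a projective unitary representation of $PGL(2,q)$ of dimension $2q+2$, which follows from Lemma~\ref{lem:dihedprojrep}(a) together with the block calculus in \eqref{eq:dpi} (when multiplying two elements, the $PSL$/non-$PSL$ bookkeeping is exactly that of $\ZZ/2\ZZ$, and the off-diagonal sign only affects the scalar ambiguity, which is harmless projectively). Then, restricting to the dihedral subgroup $\Delta_{2n}=\langle m_1,t_1\rangle$, I would record that $m_1$ has order $q+1=n$ in $PGL(2,q)$ — wait, here $n=q+1$ so $D_{2n}$ really means $D_{q+1}$ in our earlier notation applied to the space $\CC^{q+1}$ — and $t_1$ has order $2$ with $t_1m_1t_1^{-1}=m_1^{-1}$, which was already asserted in the paragraph preceding the theorem and amounts to the same finite-field computation as Lemma~\ref{lem:mt}.

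The second main step is the invariance $D\Pi(g)\cdot DH_q\cdot D\Pi(g)^* = DH_q$ for all $g\in PGL(2,q)$, which is exactly Lemma~\ref{lem:dpirep}; restricting to $\Delta_{2n}$ gives an embedding $\Delta_{2n}\hookrightarrow \Aut(DH_q)$, and hence (via Remark~\ref{rem1}(b), since $M(DH_q)$ is obtained from $DH_q$ by a fixed monomial conjugation) an embedding $\Delta_{2n}\hookrightarrow \Aut\bigl(M(DH_q)^*M(DH_q)\bigr)$ — more precisely into $\Aut$ of the Gram matrix of the double Paley ETF. Injectivity of $\Delta_{2n}\to\Aut$ follows because each nontrivial $g\in\Delta_{2n}\subset PGL(2,q)$ induces a nontrivial permutation $S(g)$ of $\PP^1_q$, so $\Pi(g)$ (and thus $D\Pi(g)$) is non-scalar.

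The third step is transitivity of the $\Delta_{2n}$-action, so that Lemma~\ref{lem:red_aut} applies. Here I would prove that $\Delta_{2n}$ acts \emph{freely} on $\PP^1_q\times\{1,2\}$ (the index set of size $2q+2$), which breaks into two points: (i) the cyclic group $\langle m_1\rangle$ of order $q+1$ acts freely on $\PP^1_q$ — this is the standard fact that a nonsplit torus in $PGL(2,q)$ acts freely on the projective line, proved exactly as Lemma~\ref{lem:free} by noting a stabilizer element must be lower triangular, and a matrix of the form $[x,y;-y,x]$ or $[x,y;y,-x]$ that is lower triangular is $\pm I$ or $\diag(1,-1)$, the latter being excluded by a determinant/quadratic-residue argument or simply by $m_1$ having odd-index order; and (ii) the non-$PSL$ elements of $\Delta_{2n}$ swap the two copies $\PP^1_q\times\{1\}\leftrightarrow\PP^1_q\times\{2\}$ (from the off-diagonal block form of $D\Pi$) while $\Delta_{2n}\cap PSL(2,q)$ preserves each copy, so combining (i) and (ii) with $|\Delta_{2n}|=2(q+1)=2q+2$ yields a single free transitive orbit. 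Then Lemma~\ref{lem:red_aut}, applied to the reduced configuration (reducedness holds because off-diagonal Gram entries have modulus $1/\sqrt{2q+1}\neq 1$), shows the double Paley ETF is switching equivalent to a dihedral frame $\overline{D_{2n}v}$ via $D\Pi|_{\Delta_{2n}}$.

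Finally I would show the representation is \emph{genuine} projective, not equivalent to a strict one. Lifting $m_1,t_1$ to $\mathbf m_1,\mathbf t_1\in GL(2,q)$ chosen as the norm-$1$-free torus generator and the reflection, one has $\mathbf m_1^{\,2(q+1)}$ a scalar but $\mathbf m_1^{\,q+1}=\det$-scalar, and setting $M:=D\Pi(m_1)$, $T:=\i\,D\Pi(t_1)$ (with a suitably normalized choice of representatives $\widehat{P}$ along the free orbit, as in the proof of Theorem~\ref{thm:paley}) one checks $M^{2n}=-I$... actually the cleanest route: by the second step the double Paley ETF is a regular dihedral $\etf{2(q+1)}{q+1}$ — regularity because its Gram has the form \eqref{eq:shm2etf} with the imaginary-off-diagonal block $\i(P-I)$, so Corollary~\ref{prop:imag} (equivalently the observation that $DH_q$ is genuinely $2$-negacirculant after the dihedral identification) forces $|H|=n$ — whence Corollary~\ref{cor:regular}(b) immediately gives that it cannot be strict regular dihedral, so $D\Pi|_{\Delta_{2n}}$ is genuine projective. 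I expect the main obstacle to be the bookkeeping in step~two and step~three: verifying Lemma~\ref{lem:dpirep} cleanly across the $PSL$/non-$PSL$ dichotomy and then correctly identifying how the non-$PSL$ elements of $\Delta_{2n}$ act on the doubled index set so that freeness-plus-transitivity genuinely holds; once that structure is pinned down the rest is a direct transcription of the Paley argument.
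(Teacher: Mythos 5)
Your argument for dihedrality follows the paper's proof essentially verbatim: Lemma~\ref{lem:dpirep} gives the embedding $\Delta_{2n}\hookrightarrow\Aut(DH_q)$, the block form \eqref{eq:dpi} together with Lemma~\ref{lem:free} (applied to $\Delta_{2n}\cap PSL(2,q)=\langle m_1^2,t_1\rangle=\Delta_n$, which is what your freeness check really uses, since the stabilizer of any index lies in the $PSL$ part) gives a free, hence transitive, action on the $2q+2$ indices, and Lemma~\ref{lem:red_aut} concludes. That half is fine.

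The gap is in your ``cleanest route'' to genuineness. You assert that the double Paley ETF is \emph{regular} because its Gram $M(DH_q)$ has the form \eqref{eq:shm2etf} with imaginary off-diagonal entries in the diagonal blocks, and then invoke Corollary~\ref{prop:imag}. But Corollary~\ref{prop:imag} refers to the block decomposition $[A,B;B^\top,A^\top]$ in the \emph{dihedral-adapted} ordering $v,m_1v,\ldots,m_1^{q}v,t_1v,\ldots$ of Theorem~\ref{thm:circpart}, whereas the blocks of \eqref{eq:shm2etf} are taken with respect to the \emph{doubling} ordering $\PP^1_q\times\{1\}\sqcup\PP^1_q\times\{2\}$. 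These do not coincide: since $D\Pi(m_1)$ is off-block-diagonal, the orbit $v,m_1v,m_1^2v,\ldots$ alternates between the two copies of $\PP^1_q$, so the dihedral-ordered block $A$ interleaves entries of the imaginary diagonal blocks $I+\tfrac{\i}{\sqrt{2q+1}}(H_q-I)$ (even powers of $m_1$) with entries of the \emph{real} off-diagonal blocks $\tfrac{1}{\sqrt{2q+1}}H_q$ (odd powers). Whether $A$ is imaginary off the diagonal after the rephasing implicit in Theorem~\ref{thm:circpart} (normalizing $\hat M=\alpha D\Pi(m_1)$ so that $\hat M^{q+1}=-I$) is a genuine computation you have not done; regularity is true but not free. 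The paper sidesteps this entirely: it restricts $D\Pi$ to the index-two subgroup $\Delta_n\subset PSL(2,q)$, observes from \eqref{eq:dpi} that $D\Pi|_{\Delta_n}=\Pi|_{\Delta_n}\oplus\Pi|_{\Delta_n}$, so its Schur multiplier equals that of $\Pi|_{\Delta_n}$, which was shown nontrivial in the proof of Theorem~\ref{thm:paley}; since a cohomology class that is trivial on $\Delta_{2n}$ restricts to the trivial class on $\Delta_n$, the class of $D\Pi|_{\Delta_{2n}}$ must be nontrivial. Either that restriction argument, or the direct normalization you started and abandoned (checking $M^{2n}=-I$, $T^2=I$, $TMT^{-1}=M^{-1}$ for suitably scaled $M,T$ and invoking Lemma~\ref{lem:proj}), would close the gap; the shortcut through Corollary~\ref{cor:regular}(b) as you wrote it does not.
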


\begin{proof}
By choosing $m=m_1^2$ and $t=t_1$, the group $\Delta_n:=\langle\{m,t\}\rangle\subset PSL(2,q)$ is a copy of $D_n$ as discussed in Lemma \ref{lem:mt} and in addition $\Delta_n< \Delta_{2n}$ with $[\Delta_{2n}:\Delta_n]=2$. Moreover, by \eqref{eq:dpi} $D\Pi|_{\Delta_n}=\diag(\Pi,\Pi)|_{\Delta_n}$. By Lemma \ref{lem:dpirep} $\Delta_{2n}$ acts as automorphisms on $DH_q$. Equation \ref{eq:dpi} together with Lemma \ref{lem:free} assures that the underlying permutation group of  $D\Pi(\Delta_{2n})$ acts freely on the standard basis of $\CC^{2n}$. Therefore, the double Paley ETF is dihedral. Since $D\Pi|_{\Delta_n}\cong\Pi|_{\Delta_n}$, the Schur multiplier of $D\Pi|_{\Delta_{2n}}$ reduces via this isomorphism go the one of $\Pi$ on $\Delta_n$, which is nontrivial. This proves $D\Pi$ is genuinely projective.
\end{proof}

\begin{remark}
    In general, doubling of a 2-circulant configurations need not be 2-circulant. The same goes for dihedral. In \cite[Thm 28]{iverson2024optimalarrangement2dlines} the Paley and double Paley ETFs appear even for $q\equiv 1\mod 4$, but they are only 2-circulant.
\end{remark}

\section{Numerical and exact results}\label{sec6}

In this section, we present some numerical and exact results pertaining the search for non regular dihedral $\etfn$. 

\subsection{Numerical Results}
In the search for dihedral ETFs we use the following procedure. We begin with the projective representation of $D_n$ sending $\mu^i\tau\to \hat M^i\hat T$, the matrices that are defined above in equation \eqref{equ:MT2}. For each complex vector $v\neq 0$ we form the frame $\Phi=D_nv$, and substitute its vectors into the frame potential function:
$$ F_p(\Phi)\ := \ \sum_{i,j} |\langle \varphi_i,\varphi_j\rangle|^p,$$ for some values of $p$, namely $p=3,4,5,6$. Finally, the Matlab function @fminsearch is called on the function $v\mapsto F_p(D_nv)$ and returns an approximate minimizer $\Phi_{approx}$. This minimum is regular projective dihedral by construction, and we declare success if all angles $|\langle \varphi_i,\varphi_j\rangle|$ are equal up to a relative error of $10^{-7}$ or less.

The numerical search was initially performed on the strict representation as in equation \eqref{equ:MT}, but there were no solutions as we now expect from Theorem \ref{thm:2negcirc}. On the other hand, in the projective case, we have found solutions for $n=2,4,6,8,10,12,14$. There were also fake solutions for $n=16,18,20$, but these had a much larger error term: The angles differed by more than 0.1\%. It is possible to exactify the solutions, so to obtain an exact ETF. This is because of Theorem \ref{thm:2negcirc}, which tells us that any regular solution is derived from a 2-negacirculant skew Hadamard matrix $H$. By using this recipe to extract $H$ from an approximate solution $\Phi_{approx}$ we obtain a real matrix $H_{approx}$ with values that are near $\pm 1$. We can round the entries to the nearest integer and check whether we got a Hadamard matrix (the skewness and negacircularity are inherent from the structure). For $n\le 14$ this has been the case. For $n>14$ we did not get a Hadamard matrix. We have tried to modify a small number of entries in the first row and accordingly in the rest of the matrix to see whether we are off by a small perturbation. This did not rectify the situation, which leads us to think that Matlab has found local minima that are far away from an absolute minimizer. We do know that at size $n=16,20$ there are dihedral ETFs, for instance those coming from the double Paley construction.

The next step was to compare the results to the Paley and double Paley ETFs. More precisely, we tried to see whether an ETF is switching equivalent to a Paley or a double Paley matrix. It follows from Lemma \ref{lem:dihedprojrep}(c) that the Paley ETF is switching equivalent to its complex conjugate. However, double Paley ETFs are in general not. It turned out that all of the examples we have found by numerical methods were switching equivalent to exactly one of Paley, double Paley, or the conjugate double Paley ETF, which we term collectively as \emph{Paley type}. In the first place, the numerical results gave us the clue to the existence of the double Paley family, which we were not aware of.

\subsection{Exact Solutions} In addition, we have searched for exact regular projective dihedral ETFs by searching for 2-negacirculant skew Hadamard matrices. In view of Theorem \ref{thm:main} we only need to search for a pair $(P,Q)$ of negacirculant matrices with $P$ being skew, satisfying $PP^\top+QQ^\top=2nI$. Moreover, $P$ and $Q$ are determined from their first row and replacing the first row of $Q$ by a nega-rotation: $[v_1,\ldots,v_n]\mapsto [-v_n,v_1,\ldots,v_{n-1}]$, results in a switching equivalent matrix. All these are used to reduce the search space for regular dihedral ETFs.
We have conducted an exhaustive search up to $n=22$, and a sample search for $n=24,26,28$. We have found solutions in all cases, except for $n=18$. This can be thus stated as a theorem:
\begin{thm}
    There is no regular dihedral $\etf{36}{18}$.
\end{thm}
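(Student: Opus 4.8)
The plan is to reduce the statement to a finite search and then carry that search out exhaustively. By Theorem~\ref{thm:main} — or more precisely Theorem~\ref{thm:2negcirc}, since $18$ is even — a regular dihedral $\etf{36}{18}$ exists if and only if there is a $2$-negacirculant skew Hadamard matrix of order $36$, that is, a matrix $H=[P,Q;-Q^\top,P^\top]$ with $P=\negc(p)$, $Q=\negc(q)$ and $p,q\in\{\pm1\}^{18}$, satisfying $H+H^\top=2I_{36}$ and $HH^\top=36\,I_{36}$. Expanding the block products and using that negacirculant matrices commute and are normal, these two requirements collapse to (i) $P+P^\top=2I_{18}$ and (ii) $PP^\top+QQ^\top=36\,I_{18}$ (the off-diagonal blocks of $HH^\top$ vanish automatically, and $P^\top P+Q^\top Q=PP^\top+QQ^\top$). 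So the theorem is equivalent to the assertion that no such pair $(p,q)$ exists when $n=18$.

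Next I would translate (i) and (ii) into arithmetic on the first rows. Condition (i) forces $p_0=1$ and $p_k=p_{n-k}$ for $1\le k\le n-1$, so $p$ is a palindrome determined by $p_0=1,p_1,\dots,p_9$ — only $2^9$ choices for $P$. For (ii) I would pass to the nega-cyclotomic idempotents $N_\zeta$ with $\zeta^{18}=-1$ from Lemma~\ref{lem:circ_decomp}(b): writing $p(\zeta)$ and $q(\zeta)$ for the corresponding eigenvalues of $P$ and $Q$, condition (ii) reads $|p(\zeta)|^2+|q(\zeta)|^2=36$ for each of the $18$ roots $\zeta$ of $-1$, while (i) reads $\mathrm{Re}\,p(\zeta)=1$. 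In particular $|p(\zeta)|^2=1+(\mathrm{Im}\,p(\zeta))^2\ge 1$, forcing $1\le|p(\zeta)|^2\le 36$ and $|q(\zeta)|^2\le 35$ for every such $\zeta$; equivalently, the periodic nega-autocorrelation sequences of $p$ and $q$ must sum to $36$ at shift $0$ and to $0$ at every nonzero shift. These give strong local pruning conditions together with the global (Parseval) identity $\sum_\zeta |p(\zeta)|^2=n^2$.

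Then I would cut down the $q$-space using switching equivalence. Replacing the first row of $Q$ by a nega-rotation $[v_1,\dots,v_n]\mapsto[-v_n,v_1,\dots,v_{n-1}]$ yields a switching equivalent skew Hadamard matrix, and so does a global sign change of $q$ (realized by conjugating $H$ with $\diag(I_n,-I_n)$, which is already the $n$-th power of the nega-rotation); together with further symmetries of the negacirculant algebra this reduces the $2^{18}$ candidates for $q$ by a factor of roughly $2n$. Thus the effective search runs over at most about $2^{9}\cdot 2^{18}/36\approx 4\times 10^{6}$ pairs, the overwhelming majority of which are eliminated immediately by the spectral bounds $1\le|p(\zeta)|^2\le 36$ and the integrality of $p(\zeta)+\overline{p(\zeta)}$ in $\ZZ[\zeta]$. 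Running this search for $n=18$ returns no valid pair $(p,q)$, which proves the theorem.

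The main obstacle here is not mathematical depth but the soundness and verifiability of the exhaustive search: one must be certain that every $2$-negacirculant skew Hadamard matrix of order $36$ is switching equivalent to one lying in the reduced search region, and that the autocorrelation/spectral pruning never discards a genuine candidate. It is worth emphasizing that the same search for $n\le 22$ produces solutions for every even $n$ except $n=18$, so the content of the theorem is exactly this isolated negative instance of an otherwise routinely feasible computation; by Corollary~\ref{cor:regular}(a) the odd values of $n$ are already excluded on purely algebraic grounds, but $18$ is even and no such shortcut is available.
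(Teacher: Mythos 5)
Your proposal is correct and follows essentially the same route as the paper: both reduce the statement via Theorem~\ref{thm:2negcirc} to the (non-)existence of a $2$-negacirculant skew Hadamard matrix $[P,Q;-Q^\top,P^\top]$ of order $36$, cut the search space for the first rows of $P$ and $Q$ using the skew condition and the nega-rotation switching equivalence, and conclude by an exhaustive computer search that no pair exists for $n=18$. The extra spectral pruning via the nega-cyclotomic idempotents is a sensible implementation detail but does not change the argument.
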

Then we have checked all of our solutions against the three Paley types. It turns out that 

\begin{thm}\label{thm:PaleyType}
    All regular dihedral $\etfn$ for $n\le 22$, $n\neq 16$ are of Paley type. There exist regular dihedral $\etfn$ not of Paley type for $n=16,24,26$.
\end{thm}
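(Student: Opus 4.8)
The plan is to turn the statement into a finite enumeration followed by a collection of switching-equivalence tests, all carried out on a computer. By Corollary \ref{cor:regular}(a) there are no regular dihedral $\etfn$ for odd $n>1$, so only even $n$ need be considered. By Theorem \ref{thm:2negcirc}, every regular dihedral $\etfn$ is, up to switching equivalence, of the form $M(H)$ for a $2$-negacirculant skew Hadamard matrix $H=[P,Q;-Q^\top,P^\top]$, and conversely Lemma \ref{lem:newshm} shows that each such $H$ produces one. Thus it suffices, for each even $n$ in range, to enumerate all $2$-negacirculant skew Hadamard matrices of order $2n$ and to decide whether the resulting $M(H)$ is switching equivalent to a Paley-type configuration.

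To run the enumeration I would parametrize $P=\negc(p)$ and $Q=\negc(q)$ with $p,q\in\{\pm1\}^n$. The skew condition $H+H^\top=2I_{2n}$ is equivalent to $P+P^\top=2I_n$, which forces $p_0=1$ and $p_k=p_{n-k}$ for $1\le k\le n-1$; hence $p$ carries only about $n/2$ free sign choices. Using the Fourier-type isomorphism $\mathcal{N}_n\cong\CC^n$ of Lemma \ref{lem:circ_decomp}(b), the Hadamard identity $PP^\top+QQ^\top=2nI_n$ becomes $|a_\zeta(P)|^2+|a_\zeta(Q)|^2=2n$ for every $\zeta\in\mu_{2n}\setminus\mu_n$. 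For a fixed $p$ this prescribes the modulus of $a_\zeta(Q)$ at each $\zeta$, i.e.\ it fixes the ``power spectrum'' of the sequence $q$; such $q$ are found by a backtracking search over partial rows, with pruning from partial power-spectrum sums. Exploiting further that a nega-rotation of the first row of $Q$ yields a switching-equivalent matrix (together with the other obvious switching symmetries), the search becomes small enough to complete exhaustively for $n\le 22$ and to sample for $n=24,26,28$.

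For each $H$ produced I would form $M(H)$ via \eqref{eq:shm2etf} and compare it against the (at most three) Paley-type configurations available at that $n$: the Paley ETF, taken as $M(H_q)$ when $q=2n-1$ is a prime power congruent to $3$ modulo $4$; the double Paley ETF $M(DH_{q'})$ when $q'=n-1$ is such a prime power; and the complex conjugate of the latter (recall from Lemma \ref{lem:dihedprojrep}(c) that the Paley ETF coincides with its own conjugate, while the double Paley ETF in general does not). Switching equivalence of two $N\times N$ Gram matrices $G,G'$, with $N=2n\le 52$, means $\Pi G\Pi^*=G'$ for a phase monomial $\Pi$; I would decide it by first comparing cheap switching invariants, such as the multisets of triple products $\{G_{ij}G_{jk}G_{ki}\}$ and of longer cycle products, and, when these agree, running a complete backtracking search for the permutation underlying $\Pi$ (once the permutation is fixed the diagonal phases of $\Pi$ are determined up to a global scalar), which either exhibits $\Pi$ or certifies that none exists. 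The computation then returns exactly the dichotomy claimed: for $n\le 22$, $n\neq16$, each $M(H)$ is switching equivalent to a Paley-type configuration, while for $n=16,24,26$ at least one is not, the negative verdict being certified by a distinguishing invariant. (For $n=26$ neither a Paley nor a double Paley $\etfn$ exists, so the claim there reduces to exhibiting any regular dihedral $\etfn$.) The resulting classification is recorded in Table \ref{tab:exact}.

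The obstacle is computational rather than conceptual. At the upper end of the range --- $n=22$ and the sampled $n=24,26,28$ --- the enumeration must be organized so that pruning from the power-spectrum constraints and from the nega-rotation and switching symmetries keeps it feasible, and the equivalence test must be implemented so that every ``no'' is backed by a certificate (a concrete invariant on which the candidate and all Paley-type configurations differ), since one cannot directly exhaust the infinitely many phase monomials. A subtler point to verify is that the reduction to $2$-negacirculant skew Hadamard matrices loses nothing, i.e.\ that no switching-equivalence class of regular dihedral $\etfn$ is overlooked; this is precisely the content of the ``if and only if'' in Theorem \ref{thm:main} combined with Lemma \ref{lem:no}.
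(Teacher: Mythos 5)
Your proposal is correct and follows essentially the same route as the paper: reduce via Theorem \ref{thm:2negcirc} and Lemma \ref{lem:newshm} to an exhaustive enumeration of $2$-negacirculant skew Hadamard matrices $[P,Q;-Q^\top,P^\top]$ (exploiting the skewness constraint on $P$ and nega-rotation symmetry of $Q$ to prune), then test each resulting Gram matrix for switching equivalence against the at most three Paley-type candidates, certifying inequivalence by a monomial-conjugation invariant. The only difference is in the choice of certificate for the negative cases --- you use triple-product multisets and a permutation backtracking search, while the paper partitions indices by first-row values and compares characteristic polynomials of the induced submatrices (backed by a graph-isomorphism routine) --- but this is an implementation detail, not a different argument.
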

The proof will be given below. All of the examples we have found for $n=28$ are of Paley type. 

\subsection{Verifying whether two dihedral ETFs are switching equivalent.} \label{sec:ver_isom}
Let $\Phi_0,\Phi_1$ be two dihedral $\etfn$, that we wish to test whether they are switching equivalent. For the discussion ahead, we will only need to assume that they are both transitive under their automorphism groups. Suppose that they are given by their grams $G_0,G_1$. By transitivity, we may assume that the first row of $G_0$ goes to the first row of $G_1$. We apply a diagonal transformation to each, after which we may assume that the first row and column of $G_0,G_1$ are positive real. The question reduces now to whether there exists a permutation matrix $L$, with $L_{1,1}=1$, such that $LG_0L^{-1}=G_1$. 
Let $G_i'$ be the matrix obtained by $G_i$ after removing the first row and column.
Treating $G_i'$ as weighted directed graphs, our problem is just the graph isomorphism problem.

Our next step is to use Sagemath \cite{sage}, which exactly provides this functionality, based on an algorithm of McKay \cite{mckay1981practical}. If a permutation matrix exists, Sagemath spits one out and we can readily verify the switching equivalence. Otherwise, as we do not want to count on Sagemath, we proceed as follows. To rule out the existence of a permutation, we need to loop over $i=1,2,\ldots, 2n-1$, assuming that the first row of $G_0'$ is mapped under the potential permutation to the $i$th row of $G_1'$. WLOG, let us rule out the case $i=1$. We partition the matrices $G_i'$ according to the values in the first row. They must have the same multiset of values. If not, we are done. Otherwise, we partition the index set of $G_0'$ according to the value of $(G_0')_{1,j}$. Let the partition be $I_1\cup I_2\cup\cdots \cup I_m$, and similarly we do for $G_1'$, to obtain a partition $J_1\cup J_2\cup \cdots\cup J_m$. We do this so that $(G_0')_{1,j}=(G_1')_{1,j'}$ if and only if $j\in I_r$ and $j'\in J_r$ for some $r$. Now it is clear that if a permutation exists, it must map each submatrix $A_r:=(G_0')_{I_r\times I_r}$ to $B_r:=(G'_1)_{J_r\times J_r}$. In particular, $A_r,B_r$ must have the same characteristic polynomial. If not for some $r$, then we are done.

In practice, this was all we needed to prove the inequivalence of a certain ETF to a Paley type. In fact, we did not need to loop over all $i$, but just over two values of $i$, as the set of all pairs of indices $(i,j)$ with $i\neq j$ has at most two orbits under the automorphism group of the Paley type matrix. This gives a complete proof of Theorem \ref{thm:PaleyType}.

\subsection{Classification of regular dihedral $\etfn$ for $n\le 22$}
Table \ref{tab:exact} gives a full list of all dihedral $\etfn$ for $n\le 22$ up to switching equivalence, and the Paley type if it is of that type. No two members in the list are isomorphic. Each entry is given as a pair of two vectors, $a,b$. For the Paley type we write P, DP, CDP if it is of Paley, double Paley or conjugate double Paley respectively. The way to recover the ETF from $a,b$ is to set up the negacirculant matrices $A=\negc(a), B=\negc(b)$, then set up the skew Hadamard matrix $H=[A,B;-B^\top,A^\top]$ and finally compute the Gram matrix as is given by Theorem \ref{thm:2negcirc}. The classification is a consequence of the technique and proof explained in \S\ref{sec:ver_isom}. We have also produced partial database for $n=24,26,28$ in Table \ref{tab:partial}.

\begin{table}[hbt]
\caption{Complete classification of regular dihedral $\etfn$ for $n\leq 22$}\label{tab:exact}
\begin{tabular}{c|lll}
$n$ &$a$& $b$ & Symmetry Type\\\hline\hline
2&$(1, -1)$& $(-1, -1)$ & P\\\hline
4& $(1, -1, -1, -1)$ & $(-1, -1, 1, -1)$ & $\text{P}\cong \text{DP}\cong \text{CDP}$\\\hline
6& 24 & 02 & P\\\hline
8&F7 & ED & DP\\
&F7& E9 & CDP\\\hline
10 & 3EF& 353 & P\\\hline
12 &F77 &F4D & P\\
& E8B & F7B & CDP\\
& E8B & F79 & DP\\\hline
14 & 3953&3EDF&P\\\hline
16 & F227&FD65 & P\\
& F227&FBAD & --\\
& F227 & FA51 &--\\\hline
20 & FB76F & EE2D7 & DP\\
	&FB76F & EE297 & CDP\\\hline
22 &3D9537& 3FD38D & P\\\hline
\end{tabular}
\end{table}

The entries of the vectors $a, b$ are 1 or -1 (as seen in the first two rows of Table \ref{tab:exact}), which is binary, so we express them in  Hex form when $n\geq6$ in Table \ref{tab:exact} and Table \ref{tab:partial}.
For example, when $n=6$, `24' in hex converted to binary is `100100', and therefore the vector $a$ is (1, -1, -1, 1, -1, -1).
\begin{table}[hbt]
\caption{Partial classification of dihedral $\etfn$}\label{tab:partial}
\begin{tabular}{c|lll}
$n$ &$a$& $b$ & Symmetry Type\\\hline\hline
24 &D180C5& F84D59 & DP\\
	&C5F7D1&533CF4&--\\
	&943614&0B2211& --\\\hline
26 &3257D49& 1881C3B & --\\\hline
28 & 8298CA0 & A5064C1 & DP\\\hline
\end{tabular}
\end{table}

For $n=24$ our list misses the Paley and the conjugate double Paley, which implies that there are at least five isomorphism classes. For $n=26$ there are no Paley type available, but there are still a solutions. 
For $n=28$ our list misses the conjugate double Paley, so there are at least two isomorphism classes.

\section*{Acknowledgments}
X. Chen was partially supported by the National Science Foundation, grant DMS-2307827.
K.~A.~Okoudjou was partially supported by the National Science Foundation, grants DMS-2205771 and DMS-2309652.

\bibliographystyle{amsplain}
\bibliography{Dihedral} 

\end{document}